\newtheorem{Theorem}{Theorem}[section]
\newtheorem{Proposition}[Theorem]{Proposition}
\newtheorem{Corollary}[Theorem]{Corollary}
\newtheorem{Lemma}[Theorem]{Lemma}
\theoremstyle{Definition} 
\newtheorem{Definition}[Theorem]{Definition}
\theoremstyle{Definition}
\theoremstyle{Remark}
\newtheorem{Example}[Theorem]{Example}
\newtheorem{Remark}[Theorem]{Remark}
\newcommand{\Cset}{\mathbb{C}}
\newcommand{\Nset}{\mathbb{N}}
\newcommand{\Rset}{\mathbb{R}}
\newcommand{\CA}{\ensuremath{{\mathcal A}}\xspace}         % Caligr-A
\newcommand{\CB}{\ensuremath{{\mathcal B}}\xspace}         % Caligr-B
\newcommand{\CC}{\ensuremath{{\mathcal C}}\xspace}         % Caligr-C
\newcommand{\CD}{\ensuremath{{\mathcal D}}\xspace}         % Caligr-D
\newcommand{\CF}{\ensuremath{{\mathcal F}}\xspace}         % Caligr-F
\newcommand{\CH}{\ensuremath{{\mathcal H}}\xspace}         % Caligr-H
\newcommand{\CK}{\ensuremath{{\mathcal K}}\xspace}         % Caligr-K
\newcommand{\CI}{\ensuremath{{\mathcal I}}\xspace}         % Caligr-I
\newcommand{\CT}{\ensuremath{{\mathcal T}}\xspace}         % Caligr-T
\newcommand{\eins}{\ensuremath{{\rm 1\kern-.25em l}}\xspace} 
\newcommand{\id}{\operatorname{id}}
\newcommand{\Trace}{\operatorname{Tr}}
\newcommand{\supp}{\mathop{\rm supp}\nolimits}
\begin{document}

\title[Universal Preparability of States 
and Asymptotic Completeness]
{Universal Preparability of States 
\\ and Asymptotic Completeness
}
\author{Rolf Gohm}
\author{Florian Haag}
\author{Burkhard K\"{u}mmerer}
\address{Rolf Gohm \\
Department of Mathematics \\ 
IMPACS, Aberystwyth University \\
Aberystwyth SY23 3BZ, United Kingdom}
\address{Florian Haag \\
Adlerstr.\,14, 73540 Heubach}
\address{Burkhard K\"{u}mmerer \\
Fachbereich Mathematik, Technische Universit\"at Darmstadt \\
Schlo{\ss}gartenstr. 7, \\
64289 Darmstadt, Germany}
\email{rog@aber.ac.uk}
\email{fu.haag@web.de}
\email{kuemmerer@mathematik.tu-darmstadt.de}
\subjclass[2000]{46L50, 46L51, 46L55, 54H20}
\keywords{repeated interaction, transition, preparability, tightness, stationary Markov chain, asymptotic completeness, controllability, observability, micromaser}
\date{}

\begin{abstract}
We introduce a notion of universal preparability for a state of a system, more precisely: for a normal state on a von Neumann algebra. It describes a situation where from an arbitrary initial state it is possible to prepare a target state with arbitrary precision by a repeated interaction with a sequence of copies of another system. For $\CB(\CH)$ we give criteria sufficient to ensure that all normal states are universally preparable which can be verified for a class of non-commutative birth and death processes realized by the interaction of a micromaser with a stream of atoms. As a tool the theory of tight sequences of states and of stationary states is further developed and we show that in the presence of stationary faithful normal states universal preparability of all normal states is equivalent to asymptotic completeness, a notion studied earlier in connection with the scattering theory of non-commutative Markov processes.
\end{abstract}
\maketitle

\section{Introduction}
\label{section:intro}

The present paper discusses preparability of states as an asymptotic property of quantum Markov processes and applies the resulting theory to some physical systems of experimental interest. From a system theoretic point of view these processes model a repeated interaction of one system with a sequence of copies of another system and hence we prove controllability of certain quantum systems and contribute to quantum control theory. 
In this introduction we first give some intuitive background before summarizing the contents of the paper. 
 
A typical quantum Markov process in discrete time is obtained by tensoring an initial system with observable algebra $\CA$ to an infinite tensor product of copies of another algebra $\CC$. Its Markov dynamics is obtained as the product of the tensor shift on the infinite tensor product of $\CC$ with a \emph{coupling automorphism} $\alpha$ acting non-trivially only on the tensor product of \CA with one of the copies of $\CC$. The Markovian semigroup $(T^n)$ is then obtained by applying the conditional expectation onto \CA, with respect to a product state on the copies of $\CC$, to elements having started in $\CA$ and evolved for
$n$ time steps. Such processes have been introduced in \cite{Ku85}, some overview is given in \cite{Ku06}.

A paradigmatic example from physics
is the experimental setting of a micromaser, which is addressed in Section \ref{section:example} below: A stream of two-level atoms passes through a cavity, one after the other. While being inside the cavity an atom interacts with one mode of the electromagnetic field. In this case $\CA = \CB(\CH)$ stands for the observables of the field mode, the algebra \CC given by the $2\!\times\!2$-matrices represents a two-level atom and the tensor product of copies of $\CC$ represents a stream of such atoms. The coupling automorphism $\alpha$ describes the overall effect of the interaction between the field mode and a single atom while passing through the cavity (\cite{WBKM00}, \cite{Ku06}). 

As a basic building block this setting also appears in other
experiments, such as the famous realization of quantum feedback described in \cite{S-H11} and related to the physics Nobel prize 2012 for S.\,Haroche. See also \cite{Ro14} for a recent survey on Markovian models, feedback and reservoir engineering in the context of the experimental progress. In our paper we do not study feedback but we develop a rigorous theory of coherent open-loop control of the system $\CA$, for a given coupling with another system $\CC$. The input states can vary and they may be entangled between different copies of $\CC$. To work out further connections of our setting with recent developments in physics will be a rewarding task for the future.

Our starting point here is that such a type of Markovian dynamics
suggests to consider it from the point of view of scattering theory: the shift takes the role of a free dynamics which is locally perturbed by the coupling automorphism $\alpha$. This was begun in \cite{KM00} where the notion of \emph{asymptotic completeness} for such systems was introduced. It roughly means that observables in \CA asymptotically end up in the tensor products of $\CC$ (see Definition \ref{def:ac} for the precise notion). Asymptotic completeness and scattering theory for Markov processes have been further discussed in \cite{WBKM00}, \cite{Go04,Go04b}, and \cite{GKL06}.

There is a dual point of view of asymptotic completeness which is at the core of the present paper: If for large times the dynamics drives the algebra $\CA$ completely into the tensor product of copies of $\CC$, then one can hope that a prescribed target state $\rho$ on $\CA$ can be prepared by preparing a state $\theta$ on this tensor product of $\CC$ such that its restriction to the time shifted copy of $\CA$ is close to $\rho$. Then the time evolution on the states (i.\,e., in the Sch\"{o}dinger picture) should drive an arbitrary initial state $\sigma$ on $\CA$ close to the target state $\rho$. This property is called \emph{universal preparability} in this paper (cf. its rigorous definition in \ref{def:preparation}(c)). In the physical applications we have in mind (the micromaser is an example) states on system $\CA$ cannot be directly accessed by experiment, while one can choose as $\CC$ a system whose states can be manipulated more easily. This intuitive idea is mathematically verified in the equivalence between (a) and (b1) of our main Theorem \ref{th:main}.

From this dual point of view and in a system theoretic language asymptotic completeness means controllability of the system \CA via input states on a tensor product of copies of $\CC$. Alternatively it can be discussed from the point of view of coding theory as was done in \cite{GKL06}, and some ideas from there are further developed in the present paper.
It may be viewed as one of the outcomes of this paper -- some indications were already seen in \cite{GKL06} -- that asymptotic completeness is a topological notion, encoded in the property of \emph{topological transitivity} (cf. Definition \ref{def:preparation}), rather than a measure theoretic one (cf. Theorem \ref{th:main}(c)). As a consequence stationary states enter the discussion only in later sections. Nevertheless, asymptotic completeness needs the topologies of von Neumann algebras. 

Our starting point here was to find a proof of asymptotic completeness of the micromaser dynamics which is not ad hoc designed for this particular system but instead is embedded into a systematic approach to asymptotic completeness. Such an approach has been started in the dissertation \cite{Ha06} by one of the authors and is further developed in the present paper. Asymptotic completeness of the micromaser and related systems now follows easily in Theorem \ref{th:micromaser} from the main Theorem \ref{th:main} in combination with Theorem \ref{th:reverse}.

It turned out that for many parts of our discussion it suffices to concentrate on one-sided time evolutions. This amounts to generalize a coupling automorphism $\alpha$ of $\CA \otimes \CC$ to a $*$-homomorphism $J:\CA \to \CA \otimes \CC$ on von Neumann algebras, which we call a \emph{transition}. In the presence of $\alpha$ it is given by $J(a)=\alpha(a\otimes \eins)$ for $a \in \CA$ (cf. Definition \ref{def:transition} and the discussion thereafter). Moreover, it suffices to consider only finitely many time steps and thus infinite tensor products are avoided in this paper. We think that this also emphasizes the practical applicability of our results for the design of physical experiments.

Let us now summarize the contents of this paper.

In Section \ref{section:preparation} we introduce the basic notions of a transition $J$, taken from \cite{GKL06}, of (universal) $J$-preparability, and of topological transitivity. We develop their basic theory and find in Theorem \ref{th:up} a useful sufficient criterion 
for universal $J$-preparability of all normal states on $\CB(\CH)$, the bounded linear operators on a Hilbert space $\CH$. This criterion gives a first hint why universal preparability is more common than one might originally think when confronted with the definition.

In Section \ref{section:criterion} we show that the concatenation of preparation procedures works well in our setting and we use this tool to prove another sufficient criterion 
for universal $J$-preparability of all normal states on $\CB(\CH)$ which is easier to verify in practice, see Theorem \ref{th:reverse}. It relies on the intuitive idea that if a vector state, in concrete realizations given as a ground state, 
is universally $J$-preparable not only in the forward but also in the reverse time direction, then we can go between any two normal states via the vector state by concatenating these procedures in a suitable way. For earlier versions of this idea in related contexts see also
\cite{WBKM00,BG07}. 

For a deeper analysis of $J$-preparability the theory of stationary states for positive operators is needed and for this reason we review in Section \ref{section:stationary states}
some relevant parts of this theory and develop it further. In particular for infinite dimensional systems it is necessary to develop a non-commutative version of the probabilistic concept of a tight sequence of probability measures. Such a version has been defined first in \cite{FR01}, we add to that a non-commutative version of Prokhorov's theorem, Theorem \ref{th:prok}, and a number of further connections between tightness and stationarity. In Section \ref{section:stationary states} we also review some theory about absorbing states. 

In Section \ref{section:stationary markov} we consider stationary states for transitions and the corresponding stationary Markov processes and we review and develop the theory of the dual extended transition operator from \cite{Go04} and \cite{GKL06}. Introducing the concept of a tight transition it turns out that tightness in this sense is satisfied in many situations and we thus obtain a large class of transitions for which the following analysis is applicable. We finally review the concept of asymptotic completeness.

Now we have all pieces together to prove in Section \ref{section:ac-up} the main result of this paper, Theorem \ref{th:main}. It states that for a tight transition $J$ on $\CB(\CH)$ asymptotic completeness is equivalent to the universal $J$-preparability of all normal states and is also equivalent to topological transitivity of $J$. The first equivalence brings together the scattering theory of Markov processes and the issue of state preparation by repeated interactions discussed in the first part of the paper. The second equivalence makes clear that stationary states should be seen as a tool and that we actually deal with a deeper topological property here. We also give another system theoretic point of view by proving the equivalence with an observability property for the time reversed system, see Theorem \ref{thm:observable}.

In the final Section \ref{section:example} we show that our theory can be applied to a class of non-commutative birth and death processes discussed in \cite{BGKRSS}, which contains the micromaser as a special case (Theorem \ref{th:micromaser}).
%In the final section \ref{section:example} we show that our theory is developed far enough to give a rigorous proof of asymptotic completeness for the micromaser interacting with a stream of atoms which was described earlier as a motivation. In fact in Theorem \ref{th:micromaser} we give the result for a class of non-commutative birth and death processes which we call generalized micromaser interactions  and which are in fact a very natural non-commutative generalization for the corresponding processes in classical probability \cite{BGKRSS}. 

We add some remarks on our notational conventions.
Because all main results concern von Neumann algebras and, even more specific, the von Neumann algebra $\CB(\CH)$ of bounded linear operators on a Hilbert space $\CH$, we simplify the terminology by restricting to such a setting from the beginning.
We refer to \cite{KR86} and \cite{Tak79} for definitions and facts about operator algebras. 
For a von Neumann algebra $\CA$ we denote by $\CA_*$ its predual, the Banach space of normal linear functionals on $\CA$. All tensor products are tensor products of von Neumann algebras.
Any weak$^*$-continuous map $T$ on $\CA$ is also called normal and its preadjoint on $\CA_*$ is denoted by $T_*$. In the case of $\CA = \CB(\CH)$ the predual can be identified with $\CT(\CH)$, the Banach space of trace class operators on $\CH$. If we refer to a projection $p \in \CA$ we always mean an orthogonal projection. Given a normal state $\varphi$ on $\CA$ we denote by $\supp\varphi$ its support projection.
If $0 \not=\xi \in \CH$ then we denote by $\omega_\xi = \langle \xi, \, \cdot \, \xi\rangle$ the corresponding functional on $\CB(\CH)$ and by $p_\xi:=\supp\omega_\xi$ its (one-dimensional) support projection. The inner product is anti-linear in the first and linear in the second component. We always assume the Hilbert spaces and the preduals of von Neumann algebras to be separable, so in particular there always exist faithful normal states.

%%%%%%%%%%%%%%%%%%%%%%%%%%%%%%%%%%%%%%%%%%%%%%%%%%%%%%%%%%%%%%%%%%%%%%%%%%%%%%%%%
%%%%%%%%%%%%%%%%%%%%%%%%%%%%%%%%%%%%%%%%%%%%%%%%%%%%%%%%%%%%%%%%%%%%%%%%%%%%%%%%%
%%%%%%%%%%%%%%%%%%%%%%%%%%%%%%%%%%%%%%%%%%%%%%%%%%%%%%%%%%%%%%%%%%%%%%%%%%%%%%%%%
%%%%%%%%%%%%%%%%%%%%%%%%%%%%%%%%%%%%%%%%%%%%%%%%%%%%%%%%%%%%%%%%%%%%%%%%%%%%%%%%%

\section{A Protocol of State Preparation based on Transitions}
\label{section:preparation}

We begin by introducing the concept of a transition, cf. \cite{GKL06}.
A transition constitutes the fundamental building block of the type of dynamics which is studied in this paper. 
In quantum physics such dynamics describe repeated interactions between systems.

%%%%%%%%%%%%%%%%%%%%%%%%%%%%%%%%%%%%%%%%%%%%%%%%%%%%%%%%%%%%%%%%%%%%%%%%%%%%%%%%%
%%%%%%%%%%%%%%%%%%%%%%%%%%%%%%%%%%%%%%%%%%%%%%%%%%%%%%%%%%%%%%%%%%%%%%%%%%%%%%%%%

\begin{Definition}[\cite{GKL06}] \label{def:transition}
Let $\CA$ and $\CC$ be von Neumann algebras. An injective unital normal $*$-homomorphism $J: \CA \rightarrow \CA \otimes \CC$ is called a transition.
\end{Definition}

%The main examples come from the description of the quantum dynamics of interactions.  
If $\alpha$ is a $*$-automorphism of $\CA \otimes \CC$ then we get a transition by $J(a) := \alpha(a \otimes \eins)$. (Normality is automatic here, see \cite{Tak79}, III.3.10.) In this case we say that the transiton $J$ is obtained from a \emph{coupling automorphism} $\alpha$. In particular, if $u \in \CA \otimes \CC$ is unitary we can define a transition $J(a) := u^* \; a \otimes \eins \; u$. In the general case we may still think of $J$ as describing on observables in $\CA$ the effect of one step of a time evolution which is produced by an interaction between two systems with observable algebras $\CA$ and $\CC$. This is the Heisenberg picture. 

In the corresponding Schr\"{o}dinger picture the preadjoint $J_*: (\CA \otimes \CC)_* \rightarrow \CA_*$ maps normal states on $\CA \otimes \CC$ to normal states on $\CA$. In particular, if we have at time $0$ a normal product state $\sigma \otimes \theta$ on $\CA \otimes \CC$ then after the interaction between the two systems has taken place the state on $\CA$ has changed to $J_*(\sigma \otimes \theta) = (\sigma \otimes \theta) J$. Hence studying transitions means to concentrate on the change of state of the system $\CA$ in dependence of initial states on $\CA$ and $\CC$ 
(and the interaction between $\CA$ and $\CC$). More specifically, if $\CA$ and $\CC$ are algebras of functions on finite sets $A$ and $C$ then a transition is induced by a map $\gamma: A \times C \to A$: Depending on the actual state $c\in C$ a state $a\in A$ moves to $\gamma(a,c) \in A$. This explains the term \emph{transition} and was the starting point of \cite{GKL06}. 
It is the main objective of this paper to ask which state changes on $\CA$ can be triggered by choosing suitable states on $\CC$ when a transition $J$ is given (cf. Definition \ref{def:preparation} below).

Considering only one time step is not enough, however. 
From a transition $J: \CA \rightarrow \CA \otimes \CC$ we can construct a repeated interaction of the system described by $\CA$ with a sequence of copies of systems described by $\CC$, as follows (cf. \cite{GKL06}): Let us denote the copies of $\CC$ by $\CC_{(1)}, \CC_{(2)}, \ldots$ and the
corresponding copies of $J$ by $J_{(1)}, J_{(2)}, \ldots$, so $J_{(n)}: \CA \rightarrow \CA \otimes \CC_{(n)}$. Then we get the time evolution for the repeated interactions up to time $n$ as a composition
\[
J_n := J_{(1)} J_{(2)} \ldots J_{(n)} \colon \quad \CA \rightarrow \CA \otimes \CC_n \quad \text{with}\; \CC_n := \bigotimes^n_{j=1} \CC_{(j)}\,; 
\]
here the standard unital embeddings of the $\CC_{(j)}$ into $\CC_n = \bigotimes^n_{j=1} \CC_{(j)}$ are used but omitted from the notation. Note that $J_{n*} := J_{(n)*} J_{(n-1)*} \ldots J_{(1)*}$,
so this really describes the change of state on $\CA$ after we interacted at time $1$ with the system described by $\CC_{(1)}$,
etc., finally at time $n$ with the system described by $\CC_{(n)}$.

Suppose, in particular, that the transition $J$ comes from a coupling automorphism $\alpha: \CA \otimes \CC \rightarrow \CA\otimes \CC$ such that $J(a) := \alpha(a \otimes \eins)$.  Then we have copies $\alpha_{(j)}$ on $\CA \otimes \CC_{(j)}$ and an automorphism $\alpha_n = \alpha_{(1)}
\ldots \alpha_{(n)}$ of $\CA \otimes \CC_n$. The restriction of $\alpha_n$ to $\CA \otimes \eins$ induces $J_n$. 

In this case there exists a reverse transition $J^r$ given by $J^r(a) := \alpha^{-1}(a \otimes \eins)$. Note that to get an inverse of $\alpha_n$ some reordering of the tensor positions must be applied: $(\alpha_n)^{-1} = (\id \otimes R_n) \circ
(\alpha^{-1})_n \circ (\id \otimes R_n)$, where $\id$ stands for the identity on $\CA$ and $R_n$ is the automorphism of $\CC_n$ which interchanges the positions in 
$\CC_n = \bigotimes^n_{j=1} \CC_{(j)}$ by the rule $j \leftrightarrow n-j+1\,$ for $j=1,\ldots,n$. We can think of $J^r$ as a time reversal of $J$. 

In \cite{GKL06} the sequence $(J_n)_{n\in\Nset}$ is interpreted as a non-commutative topological Markov chain. The Markovian character will be further illustrated when we later discuss stationary states. But for the following protocol of state preparation on $\CA$ we only need a transition $J: \CA \rightarrow \CA \otimes \CC$.

With Definition \ref{def:preparation} below we are led to the main questions discussed in this paper. Loosely speaking, given a transition $J$ as above we would like to make use of it to prepare a target state $\rho$ on $\CA$ from an arbitrary initial state $\sigma$ on $\CA$  by preparing a suitable state $\theta$ on $\CC$. Except for trivial cases, however, we cannot hope to do this within one time step.
If we have a normal state $\theta$ on $\CC_n = \bigotimes^n_{j=1} \CC_{(j)}$ then an initial state $\sigma$ on $\CA$, originally combined with $\theta$ as a product state $\sigma \otimes \theta$,
is changed after $n$ steps into the normal state $(\sigma \otimes \theta) J_n$ on $\CA$. We can interpret this as a 
protocol for the preparation of states on $\CA$ and in this paper we are particularly interested in the potential of such a protocol for producing from an arbitrary initial state on $\CA$ any desired target state on $\CA$ with any prescribed precision. We introduce some terminology.

%%%%%%%%%%%%%%%%%%%%%%%%%%%%%%%%%%%%%%%%%%%%%%%%%%%%%%%%%%%%%%%%%%%%%%%%%%%%%%%%%
%%%%%%%%%%%%%%%%%%%%%%%%%%%%%%%%%%%%%%%%%%%%%%%%%%%%%%%%%%%%%%%%%%%%%%%%%%%%%%%%%

\begin{Definition} \label{def:preparation}
Let $J: \CA \rightarrow \CA \otimes \CC$ be a transition. 
\begin{itemize}
\item[(a)]
A normal state $\rho$ on $\CA$ is called $J$-preparable from a normal state $\sigma$ on $\CA$
if there exists a sequence $(\theta_k)_{k \in \Nset}$, where each $\theta_k$ is a normal state on an algebra $\CC_{n_k}$ (with $n_k \in \Nset$)
such that the sequence $\big((\sigma \otimes \theta_k) J_{n_k}\big)_{k \in \Nset}$ converges weakly to $\rho$. Such a sequence $(\theta_k)$ is called a preparing sequence
(from $\sigma$ to $\rho$).
\item[(b)]
If every normal state $\rho$ is
$J$-preparable from all normal states $\sigma$ on $\CA$ then we call $J$ topologically transitive. 
\item[(c)]
A normal state $\rho$ on $\CA$ is called universally $J$-preparable if there exists a sequence $(\theta_k)_{k \in \Nset}$, where each $\theta_k$ is a normal state on an algebra $\CC_{n_k}$ (with $n_k \in \Nset$)
such that for all normal states $\sigma$ on $\CA$ the sequence
$\big((\sigma \otimes \theta_k) J_{n_k}\big)_{k \in \Nset}$ converges weakly to $\rho$.
Such a sequence $(\theta_k)$ is called a universally preparing sequence (for $\rho$).
\end{itemize}
\end{Definition}

\begin{Remark} \normalfont
We add a few comments on these definitions. 

\begin{enumerate}
%\item If $(\sigma \otimes \theta) J_n = \rho$ for a normal state $\theta$ on $\CC_n$ then $\rho$ is $J$-preparable from $\sigma$: We can, for example, choose a constant sequence $(\theta_k)$ with $%%%\theta_k := \theta$ for all $k$. So this most elementary case of $J$-preparability from $\sigma$ to $\rho$ is included in our definition but we also include the possibility that the preparation can only be done %approximately.
%In fact, spelling out the definition of weak convergence, we see that it is equivalent to \ref{def:preparation}(a) to say that
%$\rho$ is $J$-preparable from $\sigma$ if for all $x \in \CA$ and all $\epsilon > 0$ there exists $n \in \Nset$ and a normal state $\theta_n$ on $\CC_n$ such that 
%$| (\sigma \otimes \theta_n) J_n (x) - \rho(x) | < \epsilon$. In other words, by choosing a suitable number of copies of our $\CC$-system and a suitable normal state on the tensor product the preparation can %be done via $J$ with any precision required. We can think of \ref{def:preparation}(a) and \ref{def:preparation}(c) as quantum versions of the notion of approximate controllability in classical control theory, %cf. \cite{Co07}.

\item
Spelling out the definition of weak convergence, we see that it is equivalent to \ref{def:preparation}(a) to say that $\rho$ is $J$-preparable from $\sigma$
if we can find a sequence of sizes $(n_k)$ for the tensor product algebras $\CC_{n_k}$ and normal states $\theta_k$ on them, such that for any given $x \in \CA$ and $\epsilon > 0$
we have $| (\sigma \otimes \theta_k) J_{n_k} (x) - \rho(x) | < \epsilon$ if $k$ is chosen big enough. In other words, if we are able to provide these states $\theta_k$ then by repeatedly applying the transition $J$ we can change the state in $\CA$ from $\sigma$ to $\rho$, with arbitrary precision. We accept that this may only be an approximation and we may in some cases need $n_k \to \infty$ to achieve convergence but the definition is flexible enough to include more elementary cases as well: If already $(\sigma \otimes \theta) J_n = \rho$ for a normal state $\theta$ on $\CC_n$ for some finite $n$ then $\rho$ is $J$-preparable from $\sigma$ in the sense of \ref{def:preparation}(a). In fact, in this case we can choose a constant sequence $(\theta_k)$ with $\theta_k := \theta$ for all $k$ (here we have $n_k = n$ for all $k$). The reader should think of \ref{def:preparation}(a) and \ref{def:preparation}(c) as quantum versions of the notion of approximate controllability in classical control theory, cf. \cite{Co07}.

\item It is clear that if all normal states are universally $J$-preparable then $J$ is topologically transitive. We shall see later in our main Theorem \ref{th:main} that for $\CA = \CB(\CH)$ and under an additional assumption on $J$ (tightness) the converse also holds

\item Definition \ref{def:preparation}(b) shows that our originally physically motivated investigation is also of interest in the theory of dynamical systems and in ergodic theory. To connect with the general theory of topologically transitive spaces of operators, see \cite{DMR08}, we can consider the space of all operators on the predual $\CA_*$ which have the form $\sigma \mapsto (J_n)_* (\sigma \otimes \theta)$.

\item If there is a universally preparing sequence then one can prepare a state $\rho$ of a system $\CA$ by preparing a certain sequence of normal states on the algebras $\CC_n$ even if no information on the initial state $\sigma$ of $\CA$ is available. The micromaser discussed in Section \ref{section:example} is a typical example of such a system (cf. \cite{WBKM00}).

\item Note that the preparing sequences in the sense of Definition \ref{def:preparation} are not uniquely determined and in fact there are a lot of additional issues an experimentalist may care about, for example finding good solutions within a reasonably small time period or avoiding experimental difficulties involved in preparing specific classes of states (note that we allowed the use of arbitrarily entangled states $\theta_k$ on $\CC_{n_k}$ as preparing states). More generally one may ask in the design of preparing states for efficiency or optimality with respect to various criteria.
We don't discuss these important issues in this paper and there is a lot of work to be done in this respect.

\end{enumerate}

\end{Remark}

The choice of the weak topology for the predual agrees in applications with physicists' focus on expectations for observables in $\CA$. But in many physically relevant cases we can replace the weak topology by the norm topology. Recall that a von Neumann algebra is called atomic if every nonzero projection majorizes a nonzero minimal projection. Moreover, if a sequence of normal states on an atomic von Neumann algebra converges weakly to a normal state then it converges in norm, see \cite{Tak79}, III.V.11; therefore, Definition \ref{def:preparation} can be strengthened as follows.

%%%%%%%%%%%%%%%%%%%%%%%%%%%%%%%%%%%%%%%%%%%%%%%%%%%%%%%%%%%%%%%%%%%%%%%%%%%%%%%%%
%%%%%%%%%%%%%%%%%%%%%%%%%%%%%%%%%%%%%%%%%%%%%%%%%%%%%%%%%%%%%%%%%%%%%%%%%%%%%%%%%

\begin{Proposition} \label{prop:atomic}
If $\CA$ is atomic, in particular if $\CA = \CB(\CH)$, we can replace the weak convergence in Definition \ref{def:preparation} by norm convergence.
\end{Proposition}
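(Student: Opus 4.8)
The plan is to reduce everything to the topological fact quoted immediately before the statement: on an atomic von Neumann algebra a weakly convergent sequence of normal states whose limit is again normal converges in norm (\cite{Tak79}, III.V.11). Once this is in hand, the only thing I really need to check is that the sequences appearing in Definition \ref{def:preparation} consist of \emph{normal} states on $\CA$ and that their limit $\rho$ is a normal state; the cited result then upgrades the weak convergence to norm convergence for free. I would begin by recording that $\CB(\CH)$ is indeed atomic, since every nonzero projection in $\CB(\CH)$ dominates a one-dimensional (hence minimal) projection, so the ``in particular'' clause is a genuine special case of the atomic hypothesis.

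Next, for a fixed initial state $\sigma$ and preparing states $\theta_k$ on $\CC_{n_k}$, I would observe that $\sigma \otimes \theta_k$ is a normal state on $\CA \otimes \CC_{n_k}$, and that $J_{n_k} = J_{(1)} \cdots J_{(n_k)}$, being a composition of the unital normal $*$-homomorphisms $J_{(j)}$, is itself a unital normal $*$-homomorphism $\CA \to \CA \otimes \CC_{n_k}$. Hence each functional $(\sigma \otimes \theta_k) J_{n_k}$ is a normal state on $\CA$. Since by the hypothesis of \ref{def:preparation}(a) these states converge weakly to the normal state $\rho$, the atomicity result applies verbatim and yields $\|(\sigma \otimes \theta_k) J_{n_k} - \rho\| \to 0$. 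For part (c) the same universally preparing sequence $(\theta_k)$ works simultaneously for every $\sigma$, so the argument delivers norm convergence for each $\sigma$ separately; for part (b) one simply runs the argument over every pair $(\sigma, \rho)$.

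I do not expect a genuine obstacle here, as all of the analytic content resides in the quoted theorem of \cite{Tak79} and the remainder is bookkeeping confirming the normality of the relevant functionals. The one point deserving care is precisely that the cited implication fails for weak convergence to a non-normal limit: norm convergence is strictly stronger, so the proof hinges on the fact that both the terms $(\sigma \otimes \theta_k) J_{n_k}$ and the limit $\rho$ are normal. The former follows from normality of $J_{n_k}$ and of $\sigma \otimes \theta_k$, and the latter is built into Definition \ref{def:preparation} itself, which is exactly what makes the strengthened statement correct.
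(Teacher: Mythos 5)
Your proposal is correct and follows essentially the same route as the paper, which proves the proposition simply by invoking \cite{Tak79}, III.V.11 (weak convergence of normal states to a normal state on an atomic von Neumann algebra implies norm convergence); your explicit bookkeeping that each $(\sigma \otimes \theta_k) J_{n_k}$ is a normal state and that the limit $\rho$ is normal by definition is exactly what the paper leaves implicit. The paper only adds the remark that for $\CA = \CB(\CH)$ the statement also follows from its Lemma \ref{lem:compact}, an alternative elementary argument you did not need.
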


For the case $\CA = \CB(\CH)$ for a Hilbert space $\CH$ this also follows from our Lemma \ref{lem:compact} below. 

From a mathematical point of view the existence of universally preparing sequences may be surprising at first sight. Our main result of this section, Theorem \ref{th:up}, explains how they arise naturally in some situations. The rest of this section prepares its proof. This theorem will in turn play a crucial role in the proof of the main result of this paper, Theorem \ref{th:main}.

%%%%%%%%%%%%%%%%%%%%%%%%%%%%%%%%%%%%%%%%%%%%%%%%%%%%%%%%%%%%%%%%%%%%%%%%%%%%%%%%%
%%%%%%%%%%%%%%%%%%%%%%%%%%%%%%%%%%%%%%%%%%%%%%%%%%%%%%%%%%%%%%%%%%%%%%%%%%%%%%%%%

\begin{Proposition} \label{prop:closed}
Let $J: \CA \rightarrow \CA \otimes \CC$ be a transition, further let $\rho$ be a normal state on $\CA$ and $(\theta_k)$ a sequence
where each $\theta_k$ is a normal state on an algebra $\CC_{n_k}$.
Denote by $\CI = \CI(\rho, (\theta_k))$ the set of all normal states
$\sigma$ on $\CA$ such that $\rho$ is $J$-preparable from $\sigma$ with preparing sequence $(\theta_k)$.
Then $\CI$ is convex and closed, weakly or w.r.t. the norm (it might be empty, however).
\end{Proposition}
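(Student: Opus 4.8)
My plan is to reduce the whole statement to properties of the single sequence of maps $\Phi_k\colon\sigma\mapsto(\sigma\otimes\theta_k)J_{n_k}$ on the predual. First I would introduce the slice map $T_k:=(\id\otimes\theta_k)\circ J_{n_k}\colon\CA\to\CA$, which is unital, normal and completely positive, being the composition of the unital normal $*$-homomorphism $J_{n_k}$ with the normal slice map $\id\otimes\theta_k$; in particular each $T_k$ is a contraction, $\|T_k(x)\|\le\|x\|$ for all $x\in\CA$. Its preadjoint is $\Phi_k=(T_k)_*$, so that $(\sigma\otimes\theta_k)J_{n_k}(x)=\sigma(T_k(x))$ for every $x$ and $\Phi_k$ maps normal states to normal states. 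With this notation $\sigma\in\CI$ precisely when $\sigma(T_k(x))\to\rho(x)$ for every $x\in\CA$, i.e. $\CI=\bigcap_{x\in\CA}\{\sigma:\lim_k\sigma(T_k(x))=\rho(x)\}$.

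Convexity is then immediate from the linearity of $\sigma\mapsto\sigma(T_k(x))$: for $\sigma,\sigma'\in\CI$ and $0\le\lambda\le1$ the sequence $(\lambda\sigma+(1-\lambda)\sigma')(T_k(x))=\lambda\,\sigma(T_k(x))+(1-\lambda)\,\sigma'(T_k(x))$ converges to $\lambda\rho(x)+(1-\lambda)\rho(x)=\rho(x)$ for every $x$, so the convex combination, which is again a normal state, lies in $\CI$.

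For norm-closedness I would argue directly with a $2\epsilon$-estimate. Given $\sigma_j\to\sigma$ in norm with all $\sigma_j\in\CI$, fix $x\in\CA$ and $\epsilon>0$ and split
\[
|\sigma(T_k(x))-\rho(x)|\le\|\sigma-\sigma_j\|\,\|x\|+|\sigma_j(T_k(x))-\rho(x)|;
\]
choosing $j$ with $\|\sigma-\sigma_j\|\,\|x\|<\epsilon/2$ and then $K$ so that the second term is $<\epsilon/2$ for all $k\ge K$ yields $\sigma\in\CI$. I expect this estimate to be the one real obstacle: membership in $\CI$ is an assertion about the iterated behaviour in the two indices $j$ and $k$, and these limits decouple \emph{only} because the bound $\|T_k(x)\|\le\|x\|$ is uniform in $k$. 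This uniform contractivity is the single quantitative ingredient, and it is exactly what makes norm convergence of the approximants (which controls $\|\sigma-\sigma_j\|$) the natural hypothesis, whereas bare weak convergence would not control $\|\sigma-\sigma_j\|$ and so would not suffice for the direct argument.

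Finally, for weak-closedness I would not repeat a hands-on argument but invoke Mazur's theorem: a convex subset of a Banach space is norm-closed if and only if it is weakly closed. Since $\CI\subseteq\CA_*$ has just been shown to be convex and norm-closed, it is therefore also weakly closed, the empty case being trivial. I would add the remark that when $\CA$ is atomic---in particular for $\CA=\CB(\CH)$, the case of primary interest in this paper---weak$^*$, weak and norm convergence all coincide on the state space by \cite{Tak79}, so there the three notions of closedness agree and nothing beyond the norm estimate has to be checked.
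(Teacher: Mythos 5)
Your proof is correct and follows essentially the same route as the paper's: convexity from linearity, norm-closedness via the same $\epsilon/2$-splitting (the paper leaves the uniform bound $\|(\sigma-\sigma')\otimes\theta_k)J_{n_k}(a)|\le\|\sigma-\sigma'\|\,\|a\|$ implicit, which your slice maps $T_k$ and their contractivity make explicit), and weak-closedness from the coincidence of weak and norm closures of convex sets (Mazur), which the paper invokes in the same way. Your packaging via $T_k=(\id\otimes\theta_k)\circ J_{n_k}$ is a purely notational refinement, not a different argument.
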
 

\begin{proof}
The convexity of $\CI$ is clear because the result of a preparation with the preparing sequence $(\theta_k)$ depends linearly on the initial state $\sigma$. For convex sets weak and norm closure coincide, so it is enough to show that $\CI$ is norm closed. 
Let $a$ be any element in the unit ball of $\CA$ and $\epsilon > 0$.
 If $\sigma$ is in the closure of $\CI$ then find $\sigma' \in \CI$ such that $\| \sigma - \sigma' \| < \frac{1}{2} \epsilon$ and $k_0 \in \Nset$ so that $| (\sigma' \otimes \theta_k) J_{n_k}(a) - \rho(a)|< \frac{1}{2} \epsilon$ for all $k \ge k_0$. Then for $k \ge k_0$ we also have
$|(\sigma \otimes \theta_k) J_{n_k}(a) - \rho(a)| < \epsilon$. Hence $\sigma \in \CI$.
%\qed
\end{proof}

%%%%%%%%%%%%%%%%%%%%%%%%%%%%%%%%%%%%%%%%%%%%%%%%%%%%%%%%%%%%%%%%%%%%%%%%%%%%%%%%%
%%%%%%%%%%%%%%%%%%%%%%%%%%%%%%%%%%%%%%%%%%%%%%%%%%%%%%%%%%%%%%%%%%%%%%%%%%%%%%%%%

The next result shows that if a sequence of states prepares a vector state $\omega_\xi$ on $\CB(\CH)$ from a faithful normal state $\varphi$ then the same sequence prepares $\omega_\xi$ from any normal state.

\begin{Proposition} \label{prop:faithful}
Suppose $\CA = \CB(\CH)$, where $\CH$ is a Hilbert space, and let
$J: \CB(\CH) \rightarrow \CB(\CH) \otimes \CC$ be a transition.
Let $\varphi$ be a faithful normal state on $\CB(\CH)$ and $\omega_\xi$ a vector state from the unit vector $\xi \in \CH$.
If $\omega_\xi$ is $J$-preparable from $\varphi$ with a preparing sequence $(\theta_k)$ then $\omega_\xi$ is universally $J$-preparable with universally preparing sequence $(\theta_k)$.
\end{Proposition}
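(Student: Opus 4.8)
The plan is to exploit that the target $\omega_\xi$ is a \emph{pure} state together with the order-preservation of the preparation procedure, reducing the universal assertion to a dense family of initial vector states that happen to be dominated by the faithful state $\varphi$.

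First I would set $T_k := (\id_{\CB(\CH)} \otimes \theta_k) \circ J_{n_k}$, a normal unital completely positive map on $\CB(\CH)$, so that $(T_k)_*(\sigma) = (\sigma \otimes \theta_k) J_{n_k}$ for every normal state $\sigma$; in particular the hypothesis reads $(T_k)_*(\varphi) \to \omega_\xi$ weakly and the claim is $(T_k)_*(\sigma)\to\omega_\xi$ for all normal $\sigma$. Let $\CI = \CI(\omega_\xi, (\theta_k))$ be the set of initial states from Proposition \ref{prop:closed}; it is convex and norm-closed, and universal $J$-preparability with $(\theta_k)$ is exactly the statement that $\CI$ contains every normal state. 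Since each normal state lies in the norm-closed convex hull of the vector states (truncate its spectral decomposition), it suffices to prove $\omega_\eta \in \CI$ for every unit vector $\eta$.

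The main point is the domination step. Because $\varphi$ is faithful, its density $\rho_\varphi$ is injective, so $\operatorname{ran}(\rho_\varphi^{1/2})$ is dense in $\CH$; for $0\ne\eta = \rho_\varphi^{1/2}\beta$ one has $|\eta\rangle\langle\eta| = \rho_\varphi^{1/2}|\beta\rangle\langle\beta|\rho_\varphi^{1/2} \le \|\beta\|^2 \rho_\varphi$, so after normalising $\eta$ the vector state is dominated, $\omega_\eta \le \lambda \varphi$ for some $\lambda>0$. As $T_k$ is positive and unital, $(T_k)_*$ is positive, whence $(T_k)_*(\omega_\eta) \le \lambda (T_k)_*(\varphi)$. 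Evaluating at the positive element $p_\xi^\perp = \eins - p_\xi$ and using $(T_k)_*(\varphi)(p_\xi^\perp) \to \omega_\xi(p_\xi^\perp) = 0$ gives $(T_k)_*(\omega_\eta)(p_\xi^\perp) \to 0$, hence $(T_k)_*(\omega_\eta)(p_\xi) \to 1$ since these are states. A short lemma --- for normal states $\psi_k$ with $\psi_k(p_\xi) \to 1$ one has $\|\psi_k - \omega_\xi\| \to 0$, proved by splitting $a = p_\xi a p_\xi + p_\xi a p_\xi^\perp + p_\xi^\perp a p_\xi + p_\xi^\perp a p_\xi^\perp$ and bounding the last three blocks by Cauchy--Schwarz through $\psi_k(p_\xi^\perp) \to 0$ --- then yields $(T_k)_*(\omega_\eta) \to \omega_\xi$, so $\omega_\eta \in \CI$ for $\eta$ ranging over a dense subset of the unit sphere.

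Finally, the continuity $\|\omega_\eta - \omega_{\eta'}\| \le 2\|\eta - \eta'\|$ together with the norm-closedness of $\CI$ upgrades this to \emph{all} unit vectors $\eta$, and then convexity and closedness give that $\CI$ contains the norm-closed convex hull of the vector states, i.e. every normal state, which is the assertion (consistently with Proposition \ref{prop:atomic}). The main obstacle is precisely that the clean domination $\omega_\eta \le \lambda\varphi$ is available only on the dense range of $\rho_\varphi^{1/2}$, not for arbitrary $\eta$; the two successive density-and-closedness reductions (from dense vectors to all vectors, and from vectors to all states) are what circumvent this limitation.
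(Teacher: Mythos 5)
Your proof is correct, and its skeleton coincides with the paper's: reduce, via the norm-closed convex set $\CI$ of Proposition \ref{prop:closed}, to a dense family of initial states each dominated by a multiple of $\varphi$; then use positivity of $\sigma \mapsto (\sigma\otimes\theta_k)J_{n_k}$, evaluate at $\eins - p_\xi$, and finish with the standard fact that $\psi_k(p_\xi)\to 1$ forces $\|\psi_k - \omega_\xi\|\to 0$ (your Cauchy--Schwarz block argument is exactly the paper's Lemma \ref{lem:epsilon}; the paper instead cites \cite{Go04}, A.5.3). Where you genuinely diverge is the choice of dense family and the domination step. The paper approximates $\sigma$ by states with finite-dimensional support $q$ and argues, via compactness of the positive unit sphere of $q\,\CB(\CH)q$, that such a state satisfies $\sigma \le c\,\varphi$; you instead restrict to vector states $\omega_\eta$ with $\eta \in \operatorname{ran}(\rho_\varphi^{1/2})$, where $\omega_\eta \le \lambda\varphi$ holds by direct conjugation, at the price of two extra closure steps (the bound $\|\omega_\eta - \omega_{\eta'}\| \le 2\|\eta-\eta'\|$ to pass from the dense range to all unit vectors, then $\sigma$-convexity and norm-closedness of $\CI$ to pass to all normal states, consistent with Proposition \ref{prop:atomic}). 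Your caution here is well placed and your route is in fact the safer one: in infinite dimensions the paper's compactness argument only yields domination of $\sigma$ by $c\,\varphi$ on the corner $q\,\CB(\CH)q$, and the global inequality $\sigma \le c\,\varphi$ can genuinely fail for a finite-dimensionally supported state --- by the Douglas range criterion one has $\omega_\eta \le c\,\varphi$ for some $c$ if and only if $\eta \in \operatorname{ran}(\rho_\varphi^{1/2})$, and taking $\rho_\varphi$ with eigenvalues proportional to $n^{-2}$ and $\eta$ with coefficients proportional to $n^{-1}$ in the same eigenbasis gives a rank-one state admitting no such $c$. So your detour through the dense range of $\rho_\varphi^{1/2}$, followed by the density-and-closedness upgrades, is not merely an alternative to the paper's argument but a watertight version of it; the conclusion of the Proposition is of course unaffected, since the dominated vector states you use are plentiful enough for the reduction.
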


\begin{proof}
Any normal state $\sigma$ on $\CB(\CH)$ can be approximated by states with finite dimensional support, this is obvious if we think of these states as density matrices, i.e., $\rho(\cdot) = \Trace(d\,\cdot)$
for a positive trace class operator $d$ with $\Trace(d)=1$. 
Using Proposition \ref{prop:closed} it is therefore enough to show that $\omega_\xi$ is $J$-preparable with preparing sequence $(\theta_k)$ from any initial state $\sigma$ with finite dimensional support $q$. For such a state $\sigma$  and the given faithful normal state $\varphi$ the function $a \mapsto \frac{\sigma(a)}{\varphi(a)}$ is continuous and positive on $\{a \in q\, \CB(\CH) q \colon a \ge 0,\, \|a\|=1\}$, the positive part of the unit sphere of the finite dimensional algebra $q\, \CB(\CH) q$. Because this is a compact set the function attains a finite maximum $c > 0$ there and it follows that $\sigma \le c \, \varphi$. 

Let $p_\xi$ be the (one-dimensional) support
projection of the vector state $\omega_\xi$. Then we have
\[
0 \le (\sigma \otimes \theta_k) J_{n_k} (\eins - p_\xi) \le
c (\varphi \otimes \theta_k) J_{n_k} (\eins - p_\xi) \to c \,\omega_\xi(\eins - p_\xi) = 0 \quad (k\to\infty).
\]
Hence $(\sigma \otimes \theta_k) J_{n_k} (p_\xi) \to 1$ if
$k\to\infty$. This implies 
$ \lim_{k\to\infty} \| (\sigma \otimes \theta_k) J_{n_k} - \omega_\xi \| = 0$ (see for example \cite{Go04}, A.5.3, for a worked out argument concerning the last step). 
%\qed
\end{proof}

%%%%%%%%%%%%%%%%%%%%%%%%%%%%%%%%%%%%%%%%%%%%%%%%%%%%%%%%%%%%%%%%%%%%%%%%%%%%%%%%%
%%%%%%%%%%%%%%%%%%%%%%%%%%%%%%%%%%%%%%%%%%%%%%%%%%%%%%%%%%%%%%%%%%%%%%%%%%%%%%%%%

\begin{Definition} \label{def:compatible}
Let $J: \CA \rightarrow \CA \otimes \CC$ be a transition and let
$(\theta_k)$ and $(\theta'_k)$ be preparing sequences of any two states in the sense of Definition \ref{def:preparation} so that $\theta_k$ is a state on $\CC_{n_k}$ and $\theta'_k$ is a state on $\CC_{n'_k}$. If
$n_k = n'_k$ for all $k \in \Nset$ then $(\theta_k)$ and $(\theta'_k)$
are called compatible.
\end{Definition}

If we have compatible preparing sequences then superposition becomes available as an additional tool. Recall that a $\sigma$-convex combination of elements $x_1, x_2, \ldots$ is an expression 
$\sum^\infty_{i=1} c_i x_i$ with $0 \le c_i \in \Rset$ for all $i$ and $\sum^\infty_{i=1} c_i = 1$.

%%%%%%%%%%%%%%%%%%%%%%%%%%%%%%%%%%%%%%%%%%%%%%%%%%%%%%%%%%%%%%%%%%%%%%%%%%%%%%%%%
%%%%%%%%%%%%%%%%%%%%%%%%%%%%%%%%%%%%%%%%%%%%%%%%%%%%%%%%%%%%%%%%%%%%%%%%%%%%%%%%%

\begin{Proposition} \label{prop:convex}
Let $J: \CA \rightarrow \CA \otimes \CC$ be a transition.
If we have a compatible set of universally preparing sequences for a set of normal states on $\CA$ then any $\sigma$-convex combination
of such sequences is a compatible universally preparing sequence for the corresponding $\sigma$-convex combination of states.
\end{Proposition}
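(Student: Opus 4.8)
The plan is to verify directly that the $\sigma$-convex combination of the given universally preparing sequences does the job, the only genuine work being the interchange of an infinite sum with a limit. First I fix notation. Since a $\sigma$-convex combination involves at most countably many terms, let $(\rho_i)_{i\in\Nset}$ be the given normal states with corresponding compatible universally preparing sequences $(\theta^{(i)}_k)_{k\in\Nset}$; by compatibility (Definition \ref{def:compatible}) each $\theta^{(i)}_k$ is a normal state on one and the same algebra $\CC_{n_k}$, with $(n_k)$ independent of $i$. Given coefficients $c_i \ge 0$ with $\sum_i c_i = 1$, I set $\rho := \sum_i c_i \rho_i$ and, for each $k$, $\theta_k := \sum_i c_i \theta^{(i)}_k$. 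A $\sigma$-convex combination of normal states converges in norm (the partial sums are Cauchy because $\sum_i c_i$ converges) and the normal states form a norm-closed subset of the predual, so $\rho$ is a well-defined normal state on $\CA$ and each $\theta_k$ is a well-defined normal state on $\CC_{n_k}$ (positivity is clear and $\theta_k(\eins)=\sum_i c_i=1$); moreover $(\theta_k)$ is by construction compatible with each $(\theta^{(i)}_k)$.

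Next I would exploit linearity. The map $\tau \mapsto \sigma \otimes \tau$ is linear and norm-continuous, and for fixed $x\in\CA$ the functional $\eta \mapsto (\eta\, J_{n_k})(x)$ is a bounded linear form on the predual; hence, using the norm-convergence of the defining series of $\theta_k$, for every normal state $\sigma$ on $\CA$ and every $x \in \CA$ we obtain
\[ (\sigma \otimes \theta_k) J_{n_k}(x) = \sum_i c_i\, (\sigma \otimes \theta^{(i)}_k) J_{n_k}(x). \]
By hypothesis each summand $(\sigma \otimes \theta^{(i)}_k) J_{n_k}(x)$ converges to $\rho_i(x)$ as $k \to \infty$. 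The goal is then to conclude that $(\sigma \otimes \theta_k) J_{n_k}(x) \to \sum_i c_i \rho_i(x) = \rho(x)$ for each fixed $\sigma$ and $x$, which is exactly the weak convergence required in Definition \ref{def:preparation}(c); crucially, the sequence $(\theta_k)$ is built independently of $\sigma$, which is what makes it \emph{universally} preparing.

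The heart of the matter, and the step I expect to be the only obstacle, is justifying this termwise passage to the limit when the index set is infinite. Here I would use that each $(\sigma \otimes \theta^{(i)}_k) J_{n_k}$ and each $\rho_i$ is a normal state, so the summands obey the uniform bound $|(\sigma \otimes \theta^{(i)}_k) J_{n_k}(x) - \rho_i(x)| \le 2\|x\|$ independently of $i$ and $k$. Fixing $\epsilon > 0$ and, without loss of generality, $x$ with $\|x\| \le 1$, I split the series at an index $N$ with $\sum_{i>N} c_i < \epsilon/4$: the tail is then bounded by $2\|x\|\sum_{i>N} c_i < \epsilon/2$, while the finite head is a sum of at most $N$ terms each tending to $0$, hence less than $\epsilon/2$ for all sufficiently large $k$. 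This is a routine dominated-convergence estimate with dominating series $(2c_i)$, and it yields $|(\sigma \otimes \theta_k) J_{n_k}(x) - \rho(x)| < \epsilon$ for large $k$. Since $\sigma$ and $x$ were arbitrary while $(\theta_k)$ was fixed beforehand, this shows that $(\theta_k)$ is a universally preparing sequence for $\rho$, which completes the proof.
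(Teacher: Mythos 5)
Your proof is correct and follows essentially the same route as the paper's: form $\theta_k := \sum_i c_i \theta^{(i)}_k$, split the series at a finite index controlled by the tail $\sum_{i>N} c_i$, bound the tail uniformly (the paper uses the same bound, phrased as $\epsilon + 2\delta$), and let the finitely many head terms converge. Your additional remarks on the norm convergence of the defining series and the continuity justifying the termwise expansion make explicit what the paper leaves implicit, but the argument is the same.
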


\begin{proof}
Consider a $\sigma$-convex combination $ \tau := \sum^\infty_{i=1} c_i \tau_i$ where all $\tau_i$ are universally $J$-preparable 
with compatible preparing sequences $(\theta^{(i)}_k)$. For all $k \in \Nset$ define
$\theta_k := \sum^\infty_{i=1} c_i \theta^{(i)}_k$.
We check that $\tau$ is universally $J$-preparable with the compatible universally preparing sequence $(\theta_k)$. Indeed, fix a normal state $\sigma$ on $\CA$ and an element
$a$ in the unit ball of  $\CA$. Then we can, for any $\delta > 0$, find $i_0$ big enough so that $\sum_{i > i_0} c_i < \delta$ and then, for any $\epsilon > 0$, find $k_0$
big enough so that for all $k > k_0$ and all $i=1, \ldots, i_0$ 
\[
|(\sigma \otimes \theta^{(i)}_k) J_{n_k}(a) - \tau_i(a)| < \epsilon.
\]
Then for $k > k_0$ we have 
$|(\sigma \otimes \theta_k) J_{n_k}(a) - \tau(a)| < \epsilon + 2 \,  \delta$.
%\qed
\end{proof}

Remark: A physicist who can prepare states $\rho_1$ and $\rho_2$ 
in some way can also prepare convex combinations by performing the corresponding procedures with corresponding probabilities. But this is not $J$-preparation of a convex combination in the sense of Definition \ref{def:preparation}. Proposition \ref{prop:convex} gives us a sufficient condition for $J$-preparability of a convex combination.
\\

We are ready for the main conclusion from these considerations which gives us a useful sufficient condition for universal preparability of all normal states on $\CA = \CB(\CH)$. It will be used later to prove one of our main results, Theorem \ref{th:main}.

%%%%%%%%%%%%%%%%%%%%%%%%%%%%%%%%%%%%%%%%%%%%%%%%%%%%%%%%%%%%%%%%%%%%%%%%%%%%%%%%%
%%%%%%%%%%%%%%%%%%%%%%%%%%%%%%%%%%%%%%%%%%%%%%%%%%%%%%%%%%%%%%%%%%%%%%%%%%%%%%%%%

\begin{Theorem} \label{th:up}
Suppose $\CA = \CB(\CH)$, where $\CH$ is a Hilbert space, and let
$J: \CB(\CH) \rightarrow \CB(\CH) \otimes \CC$ be a transition.
If there exists a faithful normal state $\varphi$ on $\CB(\CH)$ so that
all vector states are $J$-preparable from $\varphi$ by compatible preparing sequences then all normal 
states on $\CB(\CH)$ are universally $J$-preparable with compatible universally preparing sequences.
\end{Theorem}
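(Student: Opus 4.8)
The plan is to assemble the three preceding propositions into a short chain. First I would invoke the hypothesis together with Proposition \ref{prop:faithful}: by assumption every vector state $\omega_\xi$ is $J$-preparable from the faithful normal state $\varphi$, and Proposition \ref{prop:faithful} promotes each such preparing sequence to a \emph{universally} preparing sequence for the same $\omega_\xi$ --- crucially, using the very same sequence $(\theta_k)$. Because the hypothesis supplies these sequences compatibly (with a common choice of sizes $n_k$), and Proposition \ref{prop:faithful} does not modify the sequence at all, the resulting family of universally preparing sequences, ranging over all vector states, stays compatible.

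Next I would express an arbitrary normal state $\rho$ on $\CB(\CH)$ as a $\sigma$-convex combination of vector states. Writing $\rho(\,\cdot\,) = \Trace(d\,\cdot\,)$ for a density operator $d$ and applying the spectral theorem, one obtains $d = \sum_i \lambda_i\, p_{\xi_i}$ with $\lambda_i \ge 0$, $\sum_i \lambda_i = 1$ and orthonormal unit vectors $\xi_i$, hence $\rho = \sum_i \lambda_i\, \omega_{\xi_i}$ as a $\sigma$-convex combination of vector states.

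Finally I would apply Proposition \ref{prop:convex} to this $\sigma$-convex combination. Each $\omega_{\xi_i}$ carries a universally preparing sequence produced in the first step, and by the first step these sequences are compatible; Proposition \ref{prop:convex} therefore yields a compatible universally preparing sequence for the corresponding $\sigma$-convex combination, namely for $\rho$ itself. Since $\rho$ was an arbitrary normal state, this establishes the theorem.

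The argument is essentially bookkeeping once the three propositions are in hand, so I do not expect a substantial obstacle. The one point demanding care is \emph{compatibility}: the whole chain works only because Proposition \ref{prop:faithful} preserves the preparing sequence verbatim, so the common size-structure $(n_k)$ guaranteed by the hypothesis survives into the universal setting and can then be fed into Proposition \ref{prop:convex}. If one only knew universal preparability of the vector states through \emph{arbitrary}, possibly incompatible, sequences, the concluding superposition step would break down --- which is precisely why the compatibility assumption is built into the statement.
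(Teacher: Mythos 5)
Your proposal is correct and follows exactly the paper's own argument: Proposition \ref{prop:faithful} upgrades the given compatible preparing sequences (verbatim, hence preserving compatibility) to universally preparing sequences for all vector states, the density-matrix spectral decomposition writes an arbitrary normal state as a $\sigma$-convex combination of vector states, and Proposition \ref{prop:convex} concludes. Your explicit observation that compatibility survives only because Proposition \ref{prop:faithful} reuses the same sequence $(\theta_k)$ is precisely the point the paper's two-line proof leaves implicit.
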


\begin{proof}
Applying Proposition \ref{prop:faithful} we find that all vector states are universally $J$-preparable by compatible universally preparing sequences. The representation of an arbitrary normal state by a density matrix shows that all normal states on $\CB(\CH)$ can be written as $\sigma$-convex combinations of vector states and hence the theorem follows from Proposition \ref{prop:convex}.
%\qed
\end{proof}

%%%%%%%%%%%%%%%%%%%%%%%%%%%%%%%%%%%%%%%%%%%%%%%%%%%%%%%%%%%%%%%%%%%%%%%%%%%%%%%%%
%%%%%%%%%%%%%%%%%%%%%%%%%%%%%%%%%%%%%%%%%%%%%%%%%%%%%%%%%%%%%%%%%%%%%%%%%%%%%%%%%
%%%%%%%%%%%%%%%%%%%%%%%%%%%%%%%%%%%%%%%%%%%%%%%%%%%%%%%%%%%%%%%%%%%%%%%%%%%%%%%%%
%%%%%%%%%%%%%%%%%%%%%%%%%%%%%%%%%%%%%%%%%%%%%%%%%%%%%%%%%%%%%%%%%%%%%%%%%%%%%%%%%

\section{Time Reversal Criterion for Universal Preparability}
\label{section:criterion}

In this section we give another sufficient condition for universal preparability
which is based on the idea of time reversal, see Theorem \ref{th:reverse}.
Because we want to concatenate preparation procedures we start by discussing the transitivity properties of $J$-preparability in more detail. Keeping track of the positions in the tensor products makes the notation somewhat laborious although the geometric ideas are simple.

%%%%%%%%%%%%%%%%%%%%%%%%%%%%%%%%%%%%%%%%%%%%%%%%%%%%%%%%%%%%%%%%%%%%%%%%%%%%%%%%%
%%%%%%%%%%%%%%%%%%%%%%%%%%%%%%%%%%%%%%%%%%%%%%%%%%%%%%%%%%%%%%%%%%%%%%%%%%%%%%%%%

\begin{Lemma} \label{lem:composition}
Let $J: \CA \rightarrow \CA \otimes \CC$ be a transition.
Further let $\theta$ be a state on 
$\CC_n = \bigotimes^n_{j=1} \CC_{(j)}$
and $\chi$ be a state on 
$\CC_m = \bigotimes^m_{j=1} \CC_{(j)}$.
Now we form a product state $\theta \otimes \chi^n$ on
$\CC_{n+m} = \bigotimes^{n+m}_{j=1} \CC_{(j)}$ where
$\chi^n$ is the state $\chi$ but shifted by $n$ positions in the tensor product to $\bigotimes^{n+m}_{j=n+1} \CC_{(j)}$.
Then for any state $\sigma$ on $\CA$ we have
\[
\big(\sigma \otimes (\theta \otimes \chi^n) \big) J_{n+m}
= (\big[(\sigma \otimes \theta) J_n \big] \otimes \chi) J_m \,.
\]
\end{Lemma}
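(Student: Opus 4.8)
The plan is to reduce the identity to the compositional (associativity) structure of the iterated transition and then compose the states through it at the level of maps, avoiding any element-wise expansion of $J(a)$, which in infinite dimensions would only be an approximation. Write $\sigma' := (\sigma \otimes \theta) J_n$, a normal state on $\CA$; the asserted right-hand side is then $(\sigma' \otimes \chi) J_m$, and both sides are normal states on $\CA$, so it suffices to show they agree as maps. The key structural input is the factorization obtained by grouping the first $n$ and the last $m$ factors in $J_{n+m} = J_{(1)} \cdots J_{(n+m)}$:
\[
J_{n+m} = (J_n \otimes \id) \circ K, \qquad K := J_{(n+1)} \cdots J_{(n+m)},
\]
where $J_n = J_{(1)} \cdots J_{(n)}$ acts on the $\CA$-slot (and as the identity on $\CC_{(n+1)} \otimes \cdots \otimes \CC_{(n+m)}$), while $K : \CA \to \CA \otimes (\CC_{(n+1)} \otimes \cdots \otimes \CC_{(n+m)})$ is precisely $J_m$ with its output copies shifted by $n$ positions --- exactly the shift that also turns $\chi$ into $\chi^n$.

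With this factorization I would then compose the states in two stages. Since $\sigma \otimes \theta$ acts on $\CA \otimes \CC_n$ and $\chi^n$ on the shifted copies, the product structure gives $(\sigma \otimes \theta \otimes \chi^n) \circ (J_n \otimes \id) = [(\sigma \otimes \theta) J_n] \otimes \chi^n = \sigma' \otimes \chi^n$, a normal state on $\CA \otimes (\CC_{(n+1)} \otimes \cdots \otimes \CC_{(n+m)})$. Composing with $K$ then yields $(\sigma \otimes (\theta \otimes \chi^n)) J_{n+m} = (\sigma' \otimes \chi^n) \circ K$. Finally, because $K$ is the shift-by-$n$ of $J_m$ and $\chi^n$ is the shift-by-$n$ of $\chi$, undoing this common shift (which does not touch the $\CA$-slot) identifies $(\sigma' \otimes \chi^n) \circ K$ with $(\sigma' \otimes \chi) J_m$, the desired right-hand side.

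I expect the only real obstacle to be the bookkeeping of tensor positions: one must verify that the relabeling of copies implicit in $K = J_{(n+1)} \cdots J_{(n+m)}$ cancels exactly against the $n$-fold shift built into $\chi^n$, so that both shifts can be undone simultaneously. Everything else is routine --- the normality of $J$ guarantees the compositions are well defined on the von Neumann algebraic tensor products, and product states factor over the tensor legs on which their two components act.
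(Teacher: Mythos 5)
Your proof is correct, and it rests on the same structural fact as the paper's own argument: the factorization $J_{n+m} = (J_n \otimes \id)\circ K$ with $K = J_{(n+1)}\cdots J_{(n+m)}$ the shift-by-$n$ copy of $J_m$ (the paper writes this as $J_{n+m} = J_n J_{(n+1)}\dots J_{(n+m)}$). Where you genuinely diverge is in how the computation through this factorization is carried out. The paper expands $J_m(a) = \sum_i a_i \otimes c_i$ elementwise and evaluates both sides of the asserted identity on $a$, conceding in a parenthesis that in infinite dimensional algebras such a representation ``may only be possible approximately'' --- so its argument implicitly approximates $J_m(a)$ by finite sums of elementary tensors and passes to the limit using continuity of the normal functionals involved. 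You instead stay entirely at the level of maps: the slice identity $(\sigma\otimes\theta\otimes\chi^n)\circ(J_n\otimes\id) = \big[(\sigma\otimes\theta)J_n\big]\otimes\chi^n$ holds because both sides are normal functionals on the von Neumann tensor product that agree on elementary tensors, which span a $\sigma$-weakly dense subalgebra; and the final step is the exact cancellation of shifts, $K = (\id\otimes s_n)\circ J_m$ composed against $\chi^n = \chi\circ s_n^{-1}$, where $s_n$ denotes the position-shift isomorphism onto $\bigotimes^{n+m}_{j=n+1}\CC_{(j)}$. What your route buys is a clean treatment of precisely the point the paper leaves informal: no approximation is needed, only normality (which guarantees both that the amplification $J_n\otimes\id$ exists as a normal map and that the density argument closes). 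The tensor-position bookkeeping you flag as the remaining obstacle is indeed the only content left, and it is settled by your observation that $K$ and $\chi^n$ carry one and the same shift, so both are undone simultaneously without touching the $\CA$-slot.
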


\begin{proof}
Let $a \in \CA$. Suppose that $J_m(a) = \sum_i a_i \otimes c_i$ with $a_i \in \CA$ and $c_i \in \CC_m$ (in infinite dimensional algebras this may only be possible approximately but this is enough for the following argument). Then $J_{n+m} = J_nJ_{(n+1)}\dots J_{(n+m)}$ hence $J_{n+m}(a) = \sum_i J_n(a_i) \otimes c_i^n$ where $c_i^n$ is the element $c_i$ but shifted by $n$ positions in the tensor product. Therefore, we find
\begin{eqnarray*}
\big(\sigma \otimes (\theta \otimes \chi^n) \big) J_{n+m}(a)
&=& (\sigma\otimes \theta)\big(\sum_i J_n(a_i)\chi(c_i)\big)
= \big[(\sigma\otimes \theta) J_n\big]\big(\sum_i a_i\chi(c_i)\big)\\
&=& \big(\big[(\sigma \otimes \theta) J_n \big] \otimes \chi\big) J_m(a)\,.
\end{eqnarray*}
%\qed
\end{proof}

%%%%%%%%%%%%%%%%%%%%%%%%%%%%%%%%%%%%%%%%%%%%%%%%%%%%%%%%%%%%%%%%%%%%%%%%%%%%%%%%%
%%%%%%%%%%%%%%%%%%%%%%%%%%%%%%%%%%%%%%%%%%%%%%%%%%%%%%%%%%%%%%%%%%%%%%%%%%%%%%%%%

The following result shows that preparability is transitive. Again the idea behind its formulation and its proof is simple: If  $\rho$ can be prepared from $\tau$ and $\tau$ from $\sigma$ then first move $\sigma$ close to $\tau$ and afterwards move $\tau$ close to $\rho$.

\begin{Proposition} \label{prop:composition}
Let $J: \CA \rightarrow \CA \otimes \CC$ be a transition
and let $\sigma, \tau, \rho$ be normal states on $\CA$.
If $\tau$ is $J$-preparable from $\sigma$ with preparing sequence $(\theta'_k)$ and $\rho$ is $J$-preparable from $\tau$ with preparing sequence $(\theta''_k)$ then $\rho$ is $J$-preparable from $\sigma$. 

Suppose that $\theta'_\ell$ is a state on $\CC_{n'(\ell)}$ and $\theta''_m$ is a state on $\CC_{n''(m)}$.
There is a preparing sequence $(\theta_k)$ from $\sigma$ to $\rho$ where all $\theta_k$ have the form
\[
\theta_k = \theta'_\ell \otimes (\theta''_m)^{n'(\ell)}
\]
(notation as in Lemma \ref{lem:composition})
for suitable $\ell, m$.
If $\CA$ is atomic, in particular if $\CA = \CB(\CH)$, then
any sequence $(\theta_k)$ of this form is a preparing sequence
provided both $\ell \to \infty$ and
$m \to \infty$ if $k \to \infty$. 
\end{Proposition}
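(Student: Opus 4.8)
The plan is to reduce the whole statement to Lemma \ref{lem:composition} and then to exploit the fact that the ``second half'' of the protocol acts as a contraction on the predual. Write $\sigma'_\ell := (\sigma \otimes \theta'_\ell) J_{n'(\ell)}$ for the states reached from $\sigma$ by the first preparing sequence, and introduce for each $m$ the map $\Phi_m : \CA_* \to \CA_*$, $\Phi_m(\mu) := (\mu \otimes \theta''_m) J_{n''(m)}$. A direct computation identifies $\Phi_m$ as the preadjoint of the map $x \mapsto (\id \otimes \theta''_m)(J_{n''(m)}(x))$ on $\CA$, which is unital, positive and normal; hence $\Phi_m$ sends normal states to normal states and is a norm contraction, $\|\Phi_m(\mu)\| \le \|\mu\|$ for all $\mu \in \CA_*$ and, crucially, \emph{uniformly} in $m$. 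With $\theta_k = \theta'_\ell \otimes (\theta''_m)^{n'(\ell)}$ and $n_k = n'(\ell)+n''(m)$, Lemma \ref{lem:composition} gives the key identity
\[
(\sigma \otimes \theta_k) J_{n_k} = \Phi_m(\sigma'_\ell).
\]

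First I would record the two convergences supplied by the hypotheses: $\sigma'_\ell \to \tau$ weakly as $\ell \to \infty$, since $(\theta'_\ell)$ prepares $\tau$ from $\sigma$; and $\Phi_m(\tau) = (\tau \otimes \theta''_m)J_{n''(m)} \to \rho$ weakly as $m \to \infty$, since $(\theta''_m)$ prepares $\rho$ from $\tau$. Because each $\Phi_m$ is a preadjoint and hence weakly continuous, for every fixed $m$ the admissible states $\Phi_m(\sigma'_\ell) = (\sigma \otimes \theta_k)J_{n_k}$ converge weakly to $\Phi_m(\tau)$ as $\ell \to \infty$; thus each $\Phi_m(\tau)$ is itself $J$-preparable from $\sigma$, and $\rho$ lies in the weak closure of the states reachable from $\sigma$. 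To upgrade this to a genuine preparing \emph{sequence} of the asserted form, in the sense of Definition \ref{def:preparation}, I would extract, by a diagonal selection, indices $\ell(k), m(k) \to \infty$ with $\Phi_{m(k)}(\sigma'_{\ell(k)}) \to \rho$; the corresponding $\theta_k = \theta'_{\ell(k)} \otimes (\theta''_{m(k)})^{n'(\ell(k))}$ then has exactly the stated shape.

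The delicate point in this diagonalization is that the weak topology on the normal state space need not be metrizable, so passing from an iterated weak limit to a single subsequence (rather than a subnet) requires care; this is the main obstacle in the general case. It is precisely here that the atomic hypothesis of the final statement removes the difficulty: on an atomic von Neumann algebra weak convergence of a sequence of normal states to a normal state forces norm convergence (the fact underlying Proposition \ref{prop:atomic}), so I may work throughout in the metric given by the predual norm, and the uniform contraction property then decouples the two limits entirely, with no balancing of $\ell$ against $m$ needed. Concretely, for any sequence of the given form with $\ell=\ell(k)\to\infty$ and $m=m(k)\to\infty$, the identity above, the triangle inequality, and $\|\Phi_{m(k)}\| \le 1$ give
\[
\big\| \Phi_{m(k)}(\sigma'_{\ell(k)}) - \rho \big\|
\le \big\| \sigma'_{\ell(k)} - \tau \big\| + \big\| \Phi_{m(k)}(\tau) - \rho \big\|,
\]
and both terms tend to $0$ by the two norm convergences above. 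Hence \emph{every} such sequence is a preparing sequence from $\sigma$ to $\rho$. Beyond this I expect the only work to be the bookkeeping of tensor positions already settled in Lemma \ref{lem:composition} and the verification that $\Phi_m$ is a unital positive preadjoint, hence a uniform contraction; this uniformity in $m$ is the structural fact that makes the atomic statement essentially immediate.
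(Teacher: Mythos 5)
Your proposal is correct and takes essentially the same route as the paper's proof: the decomposition via Lemma \ref{lem:composition}, choosing $m$ first and then $\ell$ through the weak continuity of $\mu \mapsto (\mu \otimes \theta''_m) J_{n''(m)}$, and, in the atomic case, the triangle inequality combined with the contractivity of this map on the predual (which the paper invokes directly as a norm estimate rather than via your preadjoint identification). The metrizability caveat you raise for extracting a genuine preparing \emph{sequence} in the general (non-atomic) case concerns a point the paper's own proof also passes over without comment, so it is a fair observation but not a divergence from the paper's argument.
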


\begin{proof} 
Given any weak neighbourhood $U_\rho$ of $\rho$ choose $m$ big enough so that we have $(\tau \otimes \theta''_m) J_{n''(m)} \in U_\rho$.
Because $\theta \mapsto J_{n''(m)*}(\theta \otimes \theta''_m)$ is a weakly continuous map on $\CA_*$ it maps a weak neighbourhood $U_\tau$ of $\tau$ into $U_\rho$. 
If we now choose $\ell$ big enough (depending on $m$) so that 
we have $(\sigma \otimes \theta'_\ell) J_{n'(\ell)} \in U_\tau$ then with Lemma \ref{lem:composition} also 
\[
\big(\sigma \otimes (\theta'_\ell \otimes (\theta''_m)^{n'(\ell)}) \big)
J_{n'(\ell)+n''(m)} \in U_\rho \,.
\]
If $\CA$ is atomic then by Proposition \ref{prop:atomic} we can replace weak convergence by norm convergence and obtain, again with Lemma \ref{lem:composition},
\begin{eqnarray*}
&& \| \big(\sigma \otimes (\theta'_\ell \otimes (\theta''_m)^{n'(\ell)}) \big) J_{n'(\ell)+n''(m)} - \rho \| \\
&\le&
\| (\big[(\sigma \otimes \theta'_\ell) J_{n'(\ell)} \big] \otimes \theta''_m) J_{n''(m)} - (\tau \otimes \theta''_m) J_{n''(m)} \|
+ \| (\tau \otimes \theta''_m) J_{n''(m)} - \rho \| \\
&\le&
\| (\sigma \otimes \theta'_\ell) J_{n'(\ell)} - \tau \|
+ \| (\tau \otimes \theta''_m) J_{n''(m)} - \rho \|
\end{eqnarray*}
and this tends to $0$ if $\ell \to \infty$ and
$m \to \infty$.
%\qed
\end{proof}

%%%%%%%%%%%%%%%%%%%%%%%%%%%%%%%%%%%%%%%%%%%%%%%%%%%%%%%%%%%%%%%%%%%%%%%%%%%%%%%%%
%%%%%%%%%%%%%%%%%%%%%%%%%%%%%%%%%%%%%%%%%%%%%%%%%%%%%%%%%%%%%%%%%%%%%%%%%%%%%%%%%

\begin{Corollary} \label{cor:composition}
Suppose $\CA$ is atomic and let $J: \CA \rightarrow \CA \otimes \CC$ be a transition. Suppose further that $\tau$ is universally $J$-preparable.  If $\rho$ is $J$-preparable from $\tau$ then $\rho$ is universally $J$-preparable. If $\rho_1, \rho_2$ are $J$-preparable from $\tau$ with compatible preparing sequences then $\rho_1, \rho_2$ are universally $J$-preparable
with compatible universally preparing sequences. 
\end{Corollary}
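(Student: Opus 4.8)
The plan is to combine the transitivity estimate already worked out in the atomic part of Proposition \ref{prop:composition} with the single new observation that a \emph{universally} preparing sequence for $\tau$ converges from every initial state at once, while the second stage of the preparation contributes a $\sigma$-independent error. The crucial difference from Proposition \ref{prop:composition} is that here one must produce a \emph{single} sequence $(\theta_k)$ that works simultaneously for all normal states $\sigma$; this rules out the ``choose $\ell$ large depending on $m$'' selection used there and forces a diagonal choice of the two tensor indices. Since $\CA$ is atomic, Proposition \ref{prop:atomic} lets me work with the norm topology throughout.

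Concretely, let $(\theta'_k)$, with $\theta'_k$ a normal state on $\CC_{n'_k}$, be a universally preparing sequence for $\tau$, and let $(\theta''_k)$, with $\theta''_k$ a normal state on $\CC_{n''_k}$, be a preparing sequence for $\rho$ from $\tau$. I would set
\[
\theta_k := \theta'_k \otimes (\theta''_k)^{n'_k}
\]
on $\CC_{n'_k + n''_k}$, in the notation of Lemma \ref{lem:composition}. Fixing an arbitrary normal state $\sigma$ and applying Lemma \ref{lem:composition} together with the triangle inequality exactly as in the proof of Proposition \ref{prop:composition} (the estimate of the first summand uses that $\mu \mapsto (\mu \otimes \theta''_k) J_{n''_k}$ is a norm contraction on the predual), I obtain
\[
\big\| (\sigma \otimes \theta_k) J_{n'_k + n''_k} - \rho \big\|
\le
\big\| (\sigma \otimes \theta'_k) J_{n'_k} - \tau \big\|
+ \big\| (\tau \otimes \theta''_k) J_{n''_k} - \rho \big\| .
\]
The right-hand side tends to $0$: the second summand does so independently of $\sigma$ because $(\theta''_k)$ prepares $\rho$ from $\tau$, while the first summand does so because $\tau$ is universally $J$-preparable with $(\theta'_k)$, so that $(\sigma \otimes \theta'_k) J_{n'_k} \to \tau$ for this (and indeed every) $\sigma$. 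As $\sigma$ was arbitrary, $(\theta_k)$ is a universally preparing sequence for $\rho$, giving the first assertion.

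For the second assertion I would run the same construction with a \emph{common} universally preparing sequence $(\theta'_k)$ for $\tau$ and the given compatible preparing sequences $(\theta''_{k,1})$, $(\theta''_{k,2})$ for $\rho_1,\rho_2$, which by Definition \ref{def:compatible} share their sizes $n''_k$. The resulting sequences
\[
\theta_{k,i} := \theta'_k \otimes (\theta''_{k,i})^{n'_k} \qquad (i=1,2)
\]
are each universally preparing for $\rho_i$ by the first part, and both are states on $\CC_{n'_k + n''_k}$, hence compatible. The only thing that needs care is this bookkeeping of tensor sizes, together with the point that the $\sigma$-dependent rate of the first-stage convergence does no harm — which is exactly what the contraction property of the second-stage map and the $\sigma$-independence of the second-stage error guarantee. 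I expect this interplay (uniformity over $\sigma$ despite non-uniform first-stage rates) to be the only genuinely delicate point; everything else is a direct reuse of Lemma \ref{lem:composition} and Proposition \ref{prop:composition}.
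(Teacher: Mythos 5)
Your proof is correct and follows essentially the same route as the paper: the published proof just says the construction is ``as in Proposition \ref{prop:composition}'', and the atomic-case estimate there, together with its closing remark that any sequence $\theta'_\ell \otimes (\theta''_m)^{n'(\ell)}$ with $\ell,m \to \infty$ works, is exactly what you spell out --- your diagonal choice $\ell = m = k$, the contraction property of $\mu \mapsto (\mu \otimes \theta''_k)J_{n''_k}$ on the predual, and the observation that the second-stage error is $\sigma$-independent while the first-stage error vanishes for each $\sigma$ by universality of $(\theta'_k)$. The compatibility bookkeeping for $\rho_1,\rho_2$ via a common $(\theta'_k)$ is likewise the intended reading.
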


\begin{proof}
The construction of the universally preparing sequences can be done as in Proposition \ref{prop:composition}.
%\qed
\end{proof}

For the rest of this section we assume that we have a transition $J: \CB(\CH) \rightarrow \CB(\CH) \otimes \CC$ which is induced by a coupling automorphism 
$\alpha$ of $\CB(\CH) \otimes \CC$, i.\,e., $J(a) = \alpha(a \otimes \eins)$ for $a \in \CB(\CH)$.
Then we also have another transition $J^r: \CB(\CH) \rightarrow \CB(\CH) \otimes \CC$ given for $a \in \CB(\CH)$ by $J^r(a) = \alpha^{-1}(a \otimes \eins)$ which can be thought of as a time reversal. Recall from the beginning of Section \ref{section:preparation} that $(\alpha_n)^{-1} = (\id \otimes R_n) \circ (\alpha^{-1})_n \circ (\id \otimes R_n)$, where $R_n$ interchanges the positions in 
$\bigotimes^n_{j=1} \CC_{(j)}$ by $j \leftrightarrow n-j+1\,$ for $j=1,\ldots,n$.

The next result shows that in this situation preparation of a vector state on $\CB(\CH)$ from an arbitrary normal state can be inverted. This will allow us in Theorem \ref{th:reverse} to use in certain situations a vector state as an intermediate step when preparing general normal states.

%%%%%%%%%%%%%%%%%%%%%%%%%%%%%%%%%%%%%%%%%%%%%%%%%%%%%%%%%%%%%%%%%%%%%%%%%%%%%%%%%
%%%%%%%%%%%%%%%%%%%%%%%%%%%%%%%%%%%%%%%%%%%%%%%%%%%%%%%%%%%%%%%%%%%%%%%%%%%%%%%%%

\begin{Proposition} \label{prop:reverse}
Let $\omega_\xi$ be a vector state on $\CB(\CH)$, with $\xi$ a unit vector in $\CH$. If $\omega_\xi$ is $J^r$-preparable from a normal state $\rho$ on $\CB(\CH)$ with a preparing sequence $(\theta^r_k)$ 
then, conversely, $\rho$ is $J$-preparable from $\omega_\xi$. A preparing sequence $(\theta_k)$ from $\omega_\xi$ to $\rho$ can be chosen compatibly with $(\theta^r_k)$, i.e., $\theta_k$ is a state on the same algebra $\CC_{n_k}$ as $\theta^r_k$ for all $k$, as follows:
\[
\theta_k(c) :=
(\rho \otimes \hat{\theta}^r_k) (\alpha_{n_k})^{-1} (\eins \otimes c),
\]
where $c \in \CC_{n_k}$, $\eins$ is the identity in $\CB(\CH)$,
and $\hat{\theta}^r_k := \theta^r_k \circ R_{n_k}$. 
\end{Proposition}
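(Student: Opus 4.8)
The plan is to read off from the defining formula for $\theta_k$ that $\theta_k$ is the $\CC_{n_k}$-marginal of a bipartite state whose $\CB(\CH)$-marginal is, by hypothesis, asymptotically equal to the \emph{pure} state $\omega_\xi$; purity then forces an asymptotic factorisation, and undoing the automorphism $\alpha_{n_k}$ leaves exactly $\rho$ on $\CB(\CH)$.

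First I would fix $n=n_k$ and rewrite the hypothesis. Since $J^r$ comes from the coupling automorphism $\alpha^{-1}$, the construction of the iterates gives $J^r_n(b)=(\alpha^{-1})_n(b\otimes\eins)$ with $(\alpha^{-1})_n=(\alpha^{-1})_{(1)}\cdots(\alpha^{-1})_{(n)}$, so the assumption $(\rho\otimes\theta^r_k)\,J^r_{n}\to\omega_\xi$ reads $(\rho\otimes\theta^r_k)\big((\alpha^{-1})_n(b\otimes\eins)\big)\to\omega_\xi(b)$. Because $R_n$ is an involution, conjugating the identity $(\alpha_n)^{-1}=(\id\otimes R_n)(\alpha^{-1})_n(\id\otimes R_n)$ yields $(\alpha^{-1})_n=(\id\otimes R_n)(\alpha_n)^{-1}(\id\otimes R_n)$; using $R_n(\eins)=\eins$ and $(\rho\otimes\theta^r_k)\circ(\id\otimes R_n)=\rho\otimes\hat{\theta}^r_k$, the hypothesis becomes, writing $\Theta_k:=(\rho\otimes\hat{\theta}^r_k)\circ(\alpha_n)^{-1}$ for the resulting normal state on $\CB(\CH)\otimes\CC_n$,
\[
\Theta_k(b\otimes\eins)=(\rho\otimes\hat{\theta}^r_k)\big((\alpha_n)^{-1}(b\otimes\eins)\big)\longrightarrow\omega_\xi(b),\qquad b\in\CB(\CH).
\]
By construction the $\CC_n$-marginal of $\Theta_k$ is precisely the state $\theta_k$ of the statement (positivity is clear and $\theta_k(\eins)=1$, so it is a genuine normal state), defined on the same $\CC_{n_k}$ as $\theta^r_k$; this already gives the asserted compatibility.

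The crucial step is an asymptotic factorisation. Since $\CB(\CH)$ is atomic, the weak convergence $\Theta_k|_{\CB(\CH)}\to\omega_\xi$ is automatically in norm, so with the projection $P:=p_\xi\otimes\eins$ we have $\Theta_k(P)\to1$. A Cauchy--Schwarz estimate against $P$, exactly as in the final step of the proof of Proposition \ref{prop:faithful}, controls $\Theta_k(X)-\Theta_k(PXP)$ uniformly over $\|X\|\le1$ by a multiple of $(1-\Theta_k(P))^{1/2}$; and since $p_\xi$ is one-dimensional, $P(\CB(\CH)\otimes\CC_n)P=\Cset\,p_\xi\otimes\CC_n$, so the compression $X\mapsto\Theta_k(PXP)$ equals $\omega_\xi\otimes\theta_k$ up to the same vanishing defect. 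Hence $\|\Theta_k-\omega_\xi\otimes\theta_k\|\to0$. I expect this factorisation to be the main obstacle, since it is the only place where purity of $\omega_\xi$ is genuinely used and where the estimates must be made uniform in the test element.

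Finally I would undo the automorphism. As $\alpha_n$ is isometric, $\|(\omega_\xi\otimes\theta_k)\circ\alpha_n-\Theta_k\circ\alpha_n\|\to0$; but $\Theta_k\circ\alpha_n=(\rho\otimes\hat{\theta}^r_k)\circ(\alpha_n)^{-1}\circ\alpha_n=\rho\otimes\hat{\theta}^r_k$ exactly, whose restriction to $\CB(\CH)\otimes\eins$ is $\rho$. Since $(\omega_\xi\otimes\theta_k)\,J_{n}(a)=(\omega_\xi\otimes\theta_k)\big(\alpha_n(a\otimes\eins)\big)$, this gives $\|(\omega_\xi\otimes\theta_k)\,J_{n_k}-\rho\|\to0$, so $\rho$ is $J$-preparable from $\omega_\xi$ with the preparing sequence $(\theta_k)$, compatible with $(\theta^r_k)$ as claimed.
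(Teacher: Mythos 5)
Your proof is correct and takes essentially the same route as the paper: your uniform ``asymptotic factorisation'' $\|\Theta_k-\omega_\xi\otimes\theta_k\|\to 0$ is precisely the paper's two applications of its Lemma \ref{lem:epsilon} (Cauchy--Schwarz compression by $q=p_\xi\otimes\eins$), combined via the rank-one identity $(p_\xi\otimes\eins)\,X\,(p_\xi\otimes\eins)=p_\xi\otimes Q_{\omega_\xi}(X)$ into the paper's $6\sqrt{\epsilon}\,\|z\|$ bound. Your final evaluation at $z=\alpha_{n_k}(a\otimes\eins)$, using that $\Theta_k\circ\alpha_{n_k}=\rho\otimes\hat{\theta}^r_k$ restricts to $\rho$ on $\CB(\CH)\otimes\eins$, is exactly the paper's concluding step, so there are no gaps.
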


Remark: Note that even if $\theta^r_k$ is not a highly entangled state the corresponding $\theta_k$ may be highly entangled and so it is an interesting problem how to simplify this preparing sequence.

\begin{proof}
We have to show
$\lim_{k \to \infty} (\omega_\xi \otimes \theta_k) J_{n_k}(a) = \rho(a)$ for all $a \in \CB(\CH)$.

Let us denote the (one-dimensional) support projection of $\omega_\xi$ by $p_\xi$.
By assumption $\omega_\xi$ is $J^r$-preparable from $\rho$ with a preparing sequence $(\theta^r_k)$.
Hence, for any $\epsilon > 0$ we have for all $k$ big enough that
\[
(\rho \otimes \theta^r_k) (\alpha^{-1})_{n_k} (p_\xi \otimes \eins) =
(\rho \otimes \theta^r_k) J^r_{n_k} (p_\xi) > 1 - \epsilon\,.
\] 
Interchanging the positions in the tensor product by $R_n$ and noting $R_n \eins= \eins$ we also have

\begin{align*}
\label{form:einsminuseps}
(\rho \otimes \hat{\theta}^r_k) (\alpha_{n_k})^{-1} (p_\xi \otimes \eins) 
&=
(\rho \otimes \hat{\theta}^r_k) (\text{id}\otimes R_{n_k})(\alpha^{-1})_{n_k}(\text{id}\otimes R_{n_k})(p_\xi \otimes \eins) \\
&=
(\rho \otimes {\theta}^r_k)(\alpha^{-1})_{n_k}(p_\xi \otimes \eins) 
> 1 - \epsilon\,. \tag*{(*)}
\end{align*} 

The next step is based on the following elementary estimate.

%%%%%%%%%%%%%%%%%%%%%%%%%%%%%%%%%%%%%%%%%%%%%%%%%%%%%%%%%%%%%%%%%%%%%%%%%%%%%%%%%
%%%%%%%%%%%%%%%%%%%%%%%%%%%%%%%%%%%%%%%%%%%%%%%%%%%%%%%%%%%%%%%%%%%%%%%%%%%%%%%%%

\begin{Lemma} \label{lem:epsilon}
If $z$ is a bounded operator, $q$ a projection and $\varphi$ a state then
\[
	\varphi(q) > 1-\epsilon \quad {\rm implies} \quad | \varphi(z) - \varphi (qzq) | < 3 \, \sqrt{\epsilon}\, \|z\| \,.
\]
\end{Lemma}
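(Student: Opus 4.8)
The plan is to isolate the ``small'' part of the algebra cut out by the complementary projection $p := \eins - q$ and to control the resulting cross terms by the Cauchy--Schwarz inequality for the state $\varphi$. The hypothesis $\varphi(q) > 1 - \epsilon$ reads $\varphi(p) < \epsilon$, which is the only quantitative input. Writing $z = (q+p)\,z\,(q+p)$ and expanding, the term $qzq$ cancels and one is left with
\[
\varphi(z) - \varphi(qzq) = \varphi(qzp) + \varphi(pzq) + \varphi(pzp),
\]
so it suffices to estimate these three terms individually.

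For the two mixed terms I would apply Cauchy--Schwarz in the form $|\varphi(a^*b)|^2 \le \varphi(a^*a)\,\varphi(b^*b)$. Choosing $a = z^* q$ and $b = p$ gives $|\varphi(qzp)|^2 \le \varphi(qzz^*q)\,\varphi(p)$; bounding $qzz^*q \le \|z\|^2 q$ together with $\varphi(q) \le 1$ and using $\varphi(p) < \epsilon$ yields $|\varphi(qzp)| \le \sqrt{\epsilon}\,\|z\|$, and the symmetric choice handles $|\varphi(pzq)| \le \sqrt{\epsilon}\,\|z\|$. For the remaining term I would take $a = p$ and $b = zp$, so that $|\varphi(pzp)|^2 \le \varphi(p)\,\varphi(pz^*zp) \le \epsilon \cdot \|z\|^2\epsilon$, giving the smaller bound $|\varphi(pzp)| \le \epsilon\,\|z\|$.

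Summing the three estimates produces $|\varphi(z) - \varphi(qzq)| \le (2\sqrt{\epsilon} + \epsilon)\,\|z\|$, and the final step is simply to absorb this into $3\sqrt{\epsilon}\,\|z\|$: for $\epsilon < 1$ one has $\epsilon < \sqrt{\epsilon}$, hence $2\sqrt{\epsilon} + \epsilon < 3\sqrt{\epsilon}$ (strictly, once $z \ne 0$), whereas for $\epsilon \ge 1$ the conclusion is immediate from $|\varphi(z)|, |\varphi(qzq)| \le \|z\| \le \sqrt{\epsilon}\,\|z\|$. There is no real obstacle here; the only thing needing a little care is arranging each Cauchy--Schwarz application so that the factor $\varphi(p)$ appears once in a mixed term (cost $\sqrt{\epsilon}$) and twice in the doubly-cut term (cost $\epsilon$), which is exactly what makes the three contributions add up to the stated constant.
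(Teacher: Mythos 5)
Your proof is correct and takes essentially the same route as the paper: the decomposition $z = qzq + qz(\eins-q) + (\eins-q)zq + (\eins-q)z(\eins-q)$ followed by the Cauchy--Schwarz inequality for $\varphi$ on each of the three error terms. The only cosmetic difference is that you bound the corner term $\varphi(pzp)$ by the sharper $\epsilon\,\|z\|$ and then absorb $2\sqrt{\epsilon}+\epsilon$ into $3\sqrt{\epsilon}$, whereas the paper estimates all three terms directly by $\sqrt{\epsilon}\,\|z\|$.
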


The Lemma is obtained by writing
\[
z = qzq + (\eins-q)zq + qz(\eins-q) + (\eins-q)z(\eins-q) \,;
\]
now using the Cauchy-Schwarz inequality and $z^*z \le \|z\|^2 \eins$ and $\phi(q) > 1-\epsilon$ to get
\[
|\phi((\eins-q) z q) | \le \sqrt{\phi(\eins-q)} \, \sqrt{\phi(qz^*zq)}
< \sqrt{\epsilon}\, \|z\|
\] 
and similarly for the terms $qz(\eins-q)$ and $(\eins-q)z(\eins-q)$. 
The Lemma is proved.
\\

Suppose $z \in \CB(\CH) \otimes \CC_{n_k}$. 
With the formula given for $\theta_k$ in Proposition \ref{prop:reverse} and with $Q_{\omega_\xi}$ denoting the conditional expectation
determined by $Q_{\omega_\xi} (a \otimes c) = \omega_\xi(a)\,c$ we have
\[
(\omega_\xi \otimes \theta_k)(z) 
= \theta_k (Q_{\omega_\xi} (z))
=
(\rho \otimes \hat{\theta}^r_k) (\alpha_{n_k})^{-1} (\eins \otimes Q_{\omega_\xi} (z)).
\]
With Lemma \ref{lem:epsilon} for $q = p_\xi \otimes \eins$
we obtain by \ref{form:einsminuseps} for all $k$ big enough that
\[
| (\rho \otimes \hat{\theta}^r_k) (\alpha_{n_k})^{-1} (\eins \otimes Q_{\omega_\xi} (z)) - (\rho \otimes \hat{\theta}^r_k) (\alpha_{n_k})^{-1} ((p_\xi \otimes \eins)\; \eins \otimes Q_{\omega_\xi} (z)\; (p_\xi \otimes \eins)) |
< 3 \, \sqrt{\epsilon} \,\|z\|.
\]

To simplify the second term note that, because $p_\xi$ is one-dimensional, we have $p_\xi a p_\xi
= \omega_\xi(a) p_\xi$ for all $a \in \CB(\CH)$ and from that,
easily checked on elementary tensors,
\[
(p_\xi \otimes \eins) \;\eins \otimes Q_{\omega_\xi} (z) \;(p_\xi \otimes \eins) = (p_\xi \otimes \eins) \;z\; (p_\xi \otimes \eins)\,.
\]
So the inequality becomes
\[
| (\omega_\xi \otimes \theta_k)(z) - (\rho \otimes \hat{\theta}^r_k) (\alpha_{n_k})^{-1} (p_\xi \otimes \eins)\;  z \; (p_\xi \otimes \eins)) |
< 3 \, \sqrt{\epsilon} \,\|z\|.
\]
Again applying Lemma \ref{lem:epsilon} for $q = p_\xi \otimes \eins$ we also have for $k$ big enough that for all
$z \in \CB(\CH) \otimes \CC_{n_k}$
\[
| (\rho \otimes \hat{\theta}^r_k) (\alpha_{n_k})^{-1}(z)
- (\rho \otimes \hat{\theta}^r_k) (\alpha_{n_k})^{-1}((p_\xi \otimes \eins) z (p_\xi \otimes \eins))| < 3 \, \sqrt{\epsilon}\, \|z\|.
\]
Combining both inequalities we obtain
\[
| (\rho \otimes \hat{\theta}^r_k) (\alpha_{n_k})^{-1} (z) 
- (\omega_\xi \otimes \theta_k)(z) | 
< 6 \, \sqrt{\epsilon}\, \|z\|.
\]
Given any $a \in \CB(\CH)$ we can choose $z := J_{n_k}(a) =
\alpha_{n_k}(a \otimes \eins)$ and this becomes
\[
| (\rho \otimes \hat{\theta}^r_k) (a \otimes \eins) 
- (\omega_\xi \otimes \theta_k) J_{n_k}(a) | 
< 6 \, \sqrt{\epsilon}\, \|a\|.
\]
But $(\rho \otimes \hat{\theta}^r_k) (a \otimes \eins) = \rho(a)$
and with $k \to \infty$ we conclude that
\[
\lim_{k \to \infty} (\omega_\xi \otimes \theta_k) J_{n_k}(a) = \rho(a),
\]
as asserted.
%\qed
\end{proof}

This gives us, after Theorem \ref{th:up}, another
sufficient condition for universal preparability of all normal states on $\CA = \CB(\CH)$.

%%%%%%%%%%%%%%%%%%%%%%%%%%%%%%%%%%%%%%%%%%%%%%%%%%%%%%%%%%%%%%%%%%%%%%%%%%%%%%%%%
%%%%%%%%%%%%%%%%%%%%%%%%%%%%%%%%%%%%%%%%%%%%%%%%%%%%%%%%%%%%%%%%%%%%%%%%%%%%%%%%%

\begin{Theorem} \label{th:reverse}
Let $J: \CB(\CH) \rightarrow \CB(\CH) \otimes \CC$ be a transition.
If there exists a vector state $\omega_\xi$, where $\xi$ is a unit vector in $\CH$, which is both universally
$J$-preparable and universally $J^r$-preparable then all normal states are universally $J$-preparable with compatible universally preparing sequences.
\end{Theorem}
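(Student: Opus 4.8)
The plan is to use $\omega_\xi$ as a universal intermediate state, exploiting that it can be reached from any normal state in the reverse time direction and that it can itself be prepared universally in the forward direction; the whole argument is then a matter of stringing together the results already established. First I would unpack the hypothesis that $\omega_\xi$ is universally $J^r$-preparable. By Definition \ref{def:preparation}(c) this yields a \emph{single} sequence $(\theta^r_k)$, with $\theta^r_k$ a normal state on $\CC_{n_k}$, such that $\big((\rho \otimes \theta^r_k) J^r_{n_k}\big)_k$ converges weakly to $\omega_\xi$ for \emph{every} normal state $\rho$. In particular, for each fixed $\rho$ the vector state $\omega_\xi$ is $J^r$-preparable from $\rho$ with this one and the same preparing sequence $(\theta^r_k)$.

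Next I would invert each of these preparations using Proposition \ref{prop:reverse}, which is applicable because throughout this section $J$ is induced by a coupling automorphism $\alpha$. For every normal state $\rho$ this produces a preparing sequence $(\theta^\rho_k)$ from $\omega_\xi$ to $\rho$, given explicitly by $\theta^\rho_k(c) = (\rho \otimes \hat{\theta}^r_k)(\alpha_{n_k})^{-1}(\eins \otimes c)$. The decisive observation is that, since $(\theta^r_k)$ was chosen independently of $\rho$, every $\theta^\rho_k$ lives on the same algebra $\CC_{n_k}$; hence the whole family $\{(\theta^\rho_k)\}_\rho$ consists of mutually compatible preparing sequences in the sense of Definition \ref{def:compatible}.

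Finally I would concatenate with the forward direction. Since $\omega_\xi$ is universally $J$-preparable by hypothesis and each $\rho$ is $J$-preparable from $\omega_\xi$ with the compatible sequences just constructed, Corollary \ref{cor:composition} gives that every $\rho$ is universally $J$-preparable. To keep the resulting sequences compatible across all $\rho$ at once, I would follow the atomic-case construction of Proposition \ref{prop:composition}: fix once and for all a universally $J$-preparing sequence $(\theta'_\ell)$ for $\omega_\xi$, with $\theta'_\ell$ on $\CC_{n'(\ell)}$, and form the states $\theta'_k \otimes (\theta^\rho_k)^{n'(k)}$ using a single diagonal pairing $\ell = m = k$. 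The norm estimate in Proposition \ref{prop:composition} shows these prepare $\rho$ from an arbitrary $\sigma$, and because the tensor sizes $n'(k) + n_k$ no longer depend on $\rho$, all these universally preparing sequences are compatible. The conceptual work is entirely carried by Proposition \ref{prop:reverse} and the concatenation results; the one thing to watch is the compatibility bookkeeping — ensuring that the single reverse sequence $(\theta^r_k)$ feeds a uniform family of forward sequences and that a common diagonal choice in the concatenation keeps the tensor-product sizes independent of the target $\rho$ — so that no new estimate beyond those already proved is required.
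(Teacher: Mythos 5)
Your proposal is correct and takes essentially the same route as the paper's proof: invert the single universal $J^r$-preparing sequence via Proposition \ref{prop:reverse} to obtain compatible preparing sequences from $\omega_\xi$ to every normal state (compatibility holding precisely because the one reverse sequence is independent of the target), then concatenate with a universal $J$-preparing sequence for $\omega_\xi$ via Corollary \ref{cor:composition}. Your explicit diagonal pairing $\ell=m=k$ in the concatenation is merely an unpacking of the compatibility bookkeeping that Corollary \ref{cor:composition} already handles.
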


\begin{proof}
By assumption $\omega_\xi$ is universally $J$-preparable.
Because $\omega_\xi$ is also universally $J^r$-preparable we can apply Proposition \ref{prop:reverse} for any normal state $\rho$ and find that all normal states on $\CB(\CH)$ are $J$-preparable from
$\omega_\xi$ with compatible preparing sequences. Concatenation of the two steps as in Corollary \ref{cor:composition} now yields the result.
%\qed
\end{proof}

This theorem is motivated by  physics: If $\alpha$ describes some physical interaction between $\CA$ and $\CC$ then one would expect that preparing $\CC$ repeatedly in the ground state it should drive an arbitrary state on $\CA$ into the ground state $\omega_\xi$ on $\CA$, similarly for $\alpha^{-1}$. Thus from a physical point of view the assumption seems rather natural.

Indeed, we apply Theorem \ref{th:reverse} to a class of generalized micromaser interactions in Section \ref{section:example}. In this setting the idea of using a time reversal is a physically very plausible idea and it is discussed from this point of view also in \cite{WBKM00}. A similar idea in a framework of uploading and downloading quantum information can be found in \cite{BG07}.

%%%%%%%%%%%%%%%%%%%%%%%%%%%%%%%%%%%%%%%%%%%%%%%%%%%%%%%%%%%%%%%%%%%%%%%%%%%%%%%%%
%%%%%%%%%%%%%%%%%%%%%%%%%%%%%%%%%%%%%%%%%%%%%%%%%%%%%%%%%%%%%%%%%%%%%%%%%%%%%%%%%
%%%%%%%%%%%%%%%%%%%%%%%%%%%%%%%%%%%%%%%%%%%%%%%%%%%%%%%%%%%%%%%%%%%%%%%%%%%%%%%%%
%%%%%%%%%%%%%%%%%%%%%%%%%%%%%%%%%%%%%%%%%%%%%%%%%%%%%%%%%%%%%%%%%%%%%%%%%%%%%%%%%

\section{Tightness and Stationary States}
\label{section:stationary states}

In this section we develop the theory of tightness for normal states on $\CB(\CH)$ started in \cite{FR01} and prove a non-commutative version of Prokhorov's theorem. Then we collect some results about stationary states for unital positive maps which are relevant for our investigation. While many of these results are already well known there are others which we could not find in the literature and we think that in particular Theorem \ref{th:prok}, Proposition \ref{prop:order}, Corollary \ref{cor:faithful}, Proposition \ref{prop:orthogonal} and Proposition \ref{prop:absorbing-criteria} may be of independent interest and useful elsewhere too.

%%%%%%%%%%%%%%%%%%%%%%%%%%%%%%%%%%%%%%%%%%%%%%%%%%%%%%%%%%%%%%%%%%%%%%%%%%%%%%%%%
%%%%%%%%%%%%%%%%%%%%%%%%%%%%%%%%%%%%%%%%%%%%%%%%%%%%%%%%%%%%%%%%%%%%%%%%%%%%%%%%%

\begin{Definition} [\cite{FR01}] \label{def:tight}
A sequence $(\theta_n)^\infty_{n=0}$ of normal states on $\CB(\CH)$
is called tight if for all $\epsilon > 0$ there exists a finite dimensional projection $p \in \CB(\CH)$ such that for all $n$
\[
\theta_n(p) > 1- \epsilon\,.
\]
If $T: \CB(\CH) \rightarrow \CB(\CH)$ is a normal unital positive map
then we say that a normal state $\theta$ on $\CB(\CH)$ is tight with respect to $T$ if the sequence $(\theta \circ T^n)^\infty_{n=0}$
is tight. 
\end{Definition}

This is a generalization of a definition in classical probability which corresponds to the special case $\CA = L^\infty(\Omega, \Sigma, \mu)$, the essentially bounded functions on a probability space $(\Omega, \Sigma, \mu)$ (compare \cite{Sh96}, III.2, Definition 2).
Recall that we can identify normal functionals on $\CB(\CH)$ with trace class operators $\CT(\CH)$ and that the trace class operators are the dual of the compact operators $\CK(\CH)$, so we have the
$\sigma(\CT(\CH), \CK(\CH))$-topology as a weak$^*$ topology
on the predual $\CB(\CH)_*$.

%%%%%%%%%%%%%%%%%%%%%%%%%%%%%%%%%%%%%%%%%%%%%%%%%%%%%%%%%%%%%%%%%%%%%%%%%%%%%%%%%
%%%%%%%%%%%%%%%%%%%%%%%%%%%%%%%%%%%%%%%%%%%%%%%%%%%%%%%%%%%%%%%%%%%%%%%%%%%%%%%%%

\begin{Lemma} \label{lem:compact}
Let $(\theta_n)^\infty_{n=0}$ be a sequence of normal states on $\CB(\CH)$. If it converges weak$^*$ to a normal state $\theta$ then $(\theta_n)^\infty_{n=0}$ is also norm convergent to $\theta$. 
\end{Lemma}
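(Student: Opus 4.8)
The plan is to represent the states as density operators and to exploit the fact that, because the limit $\theta$ is itself a \emph{normal} state, the whole sequence is forced to be tight; tightness then lets me compress everything to a fixed finite-dimensional corner of $\BH$, where weak$^*$ and norm convergence trivially coincide. Note that I will only ever test against finite-rank operators, so the weaker hypothesis of weak$^*$ (i.e.\ $\sigma(\CT(\CH),\CK(\CH))$) convergence is all that is used.

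First I would write $\theta_n(\cdot) = \Trace(d_n\,\cdot)$ and $\theta(\cdot) = \Trace(d\,\cdot)$ with positive trace-class operators $d_n, d$ of trace $1$. Since $d$ is trace class, the spectral projections onto its first $N$ eigenvectors furnish, for any $\epsilon > 0$, a finite-rank projection $p$ with $\theta(p) > 1 - \epsilon$. Being finite-rank, $p \in \CK(\CH)$, so the weak$^*$ hypothesis gives $\theta_n(p) = \Trace(d_n p) \to \theta(p)$; hence $\theta_n(p) > 1 - \epsilon$ for all large $n$. Thus the sequence is eventually tight with respect to this single projection $p$, which is the crux of the matter.

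Next I would use the corner estimate of Lemma \ref{lem:epsilon} to localise the functionals. For any $a$ in the unit ball of $\BH$, applying that lemma with $q = p$ to $\theta$, and (for large $n$) to each $\theta_n$, yields $|\theta(a) - \theta(pap)| < 3\sqrt{\epsilon}$ and $|\theta_n(a) - \theta_n(pap)| < 3\sqrt{\epsilon}$. It therefore suffices to control the difference on the compressed corner $p\,\BH\,p \cong M_d(\Cset)$, and here the problem is genuinely finite-dimensional: fixing matrix units $(e_{ij})_{i,j=1}^{d}$ for this corner, each $e_{ij}$ is finite-rank hence compact, so $\theta_n(e_{ij}) \to \theta(e_{ij})$; expanding $pap = \sum_{ij} b_{ij} e_{ij}$ with $|b_{ij}| \le \|pap\| \le 1$ then gives $\sup_{\|a\|\le 1}|\theta_n(pap) - \theta(pap)| \le \sum_{ij} |\theta_n(e_{ij}) - \theta(e_{ij})| \to 0$.

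Combining the three estimates, $\|\theta_n - \theta\| = \sup_{\|a\|\le 1}|\theta_n(a) - \theta(a)| \le 6\sqrt{\epsilon} + \sum_{ij}|\theta_n(e_{ij}) - \theta(e_{ij})|$, whence $\limsup_n \|\theta_n - \theta\| \le 6\sqrt{\epsilon}$; letting $\epsilon \to 0$ finishes the argument. The only genuine content is the passage to a finite-dimensional corner, which is available precisely because the limit is normal (so its tail mass is negligible and tightness is inherited) and because weak$^*$ convergence already controls all finite-rank, hence compact, test operators; everything after that is routine linear algebra, so I do not expect any serious obstacle beyond correctly organising the $\epsilon$-bookkeeping.
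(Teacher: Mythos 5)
Your proposal is correct and follows essentially the same route as the paper's proof: use normality of the limit $\theta$ to find a finite-rank projection carrying mass $> 1-\epsilon$, transfer this to the $\theta_n$ via weak$^*$ convergence, compress with Lemma \ref{lem:epsilon} at the cost of $6\sqrt{\epsilon}$, and conclude by finite dimensionality of the corner. Your matrix-unit expansion merely spells out the step the paper summarises as ``$\|(\theta-\theta_n)(q\cdot q)\| \to 0$ because $q$ is finite dimensional,'' so there is no substantive difference.
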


\begin{proof}
Let $\epsilon >0$. Because $\theta$ is normal we can choose a finite dimensional projection $q \in \CK(\CH) \subset \CB(\CH)$ such that $\theta(q) > 1-\epsilon$. Because of the weak$^*$-convergence of
$\theta_n$ to $\theta$ there exists $n_0 \in \Nset$ so that for all $n > n_0$ also $\theta_n(q) > 1-\epsilon$. With Lemma \ref{lem:epsilon}
for $\theta$ and $\theta_n$, applied to $z$ with $\|z\| \le 1$, we get
\[
\| \theta - \theta_n \| \le \| \theta(q \cdot q) - \theta_n(q \cdot q) \|
+ 6 \sqrt{\epsilon}.
\]
But $\| \theta(q \cdot q) - \theta_n(q \cdot q) \| =
\| (\theta - \theta_n) (q \cdot q) \| \to 0$ for $n \to \infty$ because $q$ is finite dimensional.
%\qed
\end{proof}

The following is a non-commutative version of a theorem of Prokhorov in classical probability theory, see for example
\cite{Sh96}, III.2, Theorem 1.

%%%%%%%%%%%%%%%%%%%%%%%%%%%%%%%%%%%%%%%%%%%%%%%%%%%%%%%%%%%%%%%%%%%%%%%%%%%%%%%%%
%%%%%%%%%%%%%%%%%%%%%%%%%%%%%%%%%%%%%%%%%%%%%%%%%%%%%%%%%%%%%%%%%%%%%%%%%%%%%%%%%

\begin{Theorem} \label{th:prok}
A sequence $(\theta_n)^\infty_{n=0}$ of normal states on $\CB(\CH)$ is tight if and only if it is relatively compact in the norm topology. 
\end{Theorem}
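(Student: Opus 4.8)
The plan is to prove the two implications separately, using the identification of normal states with density matrices in $\CT(\CH)$ and the duality $\CT(\CH) = \CK(\CH)^*$. The easy direction is that relative norm compactness implies tightness. Here, given $\epsilon > 0$, I would cover the relatively compact set $\{\theta_n\}$ by finitely many norm balls of radius $\epsilon/2$ centred at normal states $\psi_1, \dots, \psi_m$. Since each $\psi_j$ is normal, there is a finite-dimensional projection $p_j$ with $\psi_j(\eins - p_j) < \epsilon/2$; setting $p := \bigvee_{j=1}^m p_j$ (still finite-dimensional) gives $\psi_j(\eins - p) < \epsilon/2$ for every $j$. For arbitrary $n$, choosing $j$ with $\|\theta_n - \psi_j\| < \epsilon/2$ and using $\|\eins - p\| \le 1$ yields $\theta_n(\eins - p) < \epsilon$, that is $\theta_n(p) > 1 - \epsilon$ for all $n$, which is exactly tightness.

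For the converse---the genuine Prokhorov content---I would argue via sequential compactness in the metric norm topology on $\CT(\CH)$, so that it suffices to show every subsequence of $(\theta_n)$ admits a further norm-convergent subsequence. Fix such a subsequence; it is again tight, with the same projections. Regarding the $\theta_n$ as elements of the unit ball of $\CT(\CH) = \CK(\CH)^*$, Banach--Alaoglu together with the assumed separability (which makes the weak$^*$ topology metrizable on bounded sets) provides a weak$^*$-convergent sub-subsequence $\theta_{n_k} \to \mu$. Testing against positive compact operators, for instance rank-one projections, shows $\mu \ge 0$, and clearly $\Trace(\mu) \le 1$.

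The crucial point, and the main obstacle, is to rule out loss of mass in the limit, i.e.\ to show $\Trace(\mu) = 1$ so that $\mu$ is again a normal state. This is precisely where tightness enters: given $\epsilon > 0$, the tightness projection $p$ satisfies $\theta_{n_k}(p) > 1 - \epsilon$ for all $k$; since $p$ is finite-dimensional and hence compact, weak$^*$ convergence gives $\mu(p) = \lim_k \theta_{n_k}(p) \ge 1 - \epsilon$, whence $\Trace(\mu) \ge 1 - \epsilon$ for every $\epsilon$, so $\Trace(\mu) = 1$. Once $\mu$ is known to be a normal state, Lemma \ref{lem:compact} upgrades the weak$^*$ convergence $\theta_{n_k} \to \mu$ to norm convergence, producing the desired norm-convergent sub-subsequence and completing the proof. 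All the genuine work is in this ``no escape of mass'' step; the remainder is soft functional analysis.
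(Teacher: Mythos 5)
Your proof is correct. The substantive direction (tight $\Rightarrow$ relatively norm compact) is essentially the paper's own argument: extract a weak$^*$-convergent subsequence via Banach--Alaoglu and separability, use the tightness projections (which are compact operators, so legitimate test elements in the $\sigma(\CT(\CH),\CK(\CH))$-duality) to rule out escape of mass and conclude the limit is a normal state, then invoke Lemma \ref{lem:compact} to upgrade weak$^*$ to norm convergence; your explicit reduction to arbitrary subsequences is slightly more careful than the paper's phrasing, which extracts a single convergent subsequence and leaves the routine extension implicit. Where you genuinely diverge is the easy direction: you give a direct total-boundedness argument, covering the set by finitely many $\epsilon/2$-balls centred at normal states (which one may take inside the set itself, guaranteeing normality) and taking the join $p = \bigvee_j p_j$ of the associated finite-dimensional projections as a single uniform tightness projection, whereas the paper argues by contradiction: it fixes finite-dimensional projections $q_k$ converging weak$^*$ to $\eins$, extracts from a putative failure of tightness a subsequence with $\theta_{n_k}(q_k) \le 1-\epsilon$, and contradicts this using a norm-convergent sub-subsequence. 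Your $\epsilon$-net version is constructive -- it exhibits the tightness projection explicitly from the net -- while the paper's contradiction argument is a little shorter to state; both are standard and equally rigorous.
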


\begin{proof}
By Alaoglu's theorem, see \cite{KR86}, 1.6.5, the unit ball of $\CB(\CH)_*$ is weak$^*$ compact  and hence
there exists a subsequence $(\theta_{n_k})$ of $(\theta_n)^\infty_{n=0}$ ($\CH$ is separable) which converges weak$^*$ to a positive normal functional $\theta$. In particular if $q$ is a finite dimensional projection then $\theta_{n_k}(q) \to \theta(q)$ for $k \to \infty$ and tightness of $(\theta_n)^\infty_{n=0}$ implies that $\theta(\eins) = 1$, so
$\theta$ is a state. Now by Lemma \ref{lem:compact} the subsequence $(\theta_{n_k})$ converges to $\theta$ in the norm topology. This shows that if $(\theta_n)^\infty_{n=0}$ is tight then it is relatively compact in the norm topology. 

To get the converse, let $(q_k)^\infty_{k=0}$ be a sequence of finite dimensional projections in $\CB(\CH)$ which converges weak$^*$ to $\eins$. Suppose $(\theta_n)^\infty_{n=0}$ is not tight. Then there exists 
$\epsilon > 0$ such that for a subsequence $(\theta_{n_k})$ we have
$\theta_{n_k}(q_k) \le 1-\epsilon$. But if $(\theta_n)^\infty_{n=0}$
is relatively compact in the norm topology then a subsequence of $(\theta_{n_k})$ converges in the norm to a normal state and then the corresponding subsequence of $(\theta_{n_k}(q_k))$ converges to $1$. This cannot be the case.
%\qed
\end{proof}

%%%%%%%%%%%%%%%%%%%%%%%%%%%%%%%%%%%%%%%%%%%%%%%%%%%%%%%%%%%%%%%%%%%%%%%%%%%%%%%%%
%%%%%%%%%%%%%%%%%%%%%%%%%%%%%%%%%%%%%%%%%%%%%%%%%%%%%%%%%%%%%%%%%%%%%%%%%%%%%%%%%

\begin{Definition}
Let $T$ be a normal unital positive map on a von Neumann algebra $\CA$. We say that a normal state $\phi$ on $\CA$ is stationary for $T$ if $\phi \circ T = \phi$. In this case we also write
$T: (\CA,\phi) \rightarrow (\CA,\phi)$.
\end{Definition}

%%%%%%%%%%%%%%%%%%%%%%%%%%%%%%%%%%%%%%%%%%%%%%%%%%%%%%%%%%%%%%%%%%%%%%%%%%%%%%%%%
%%%%%%%%%%%%%%%%%%%%%%%%%%%%%%%%%%%%%%%%%%%%%%%%%%%%%%%%%%%%%%%%%%%%%%%%%%%%%%%%%

\begin{Proposition} [\cite{FR01}] \label{prop:existence}
Let $T$ be a normal unital positive map on $\CB(\CH)$. There exists a stationary normal state $\phi$ for $T$ if and only if there is a normal state $\theta$ on $\CB(\CH)$ which is tight with respect to $T$. In fact, if $\theta$ is tight with respect to $T$ then the sequence $(\frac{1}{N} \sum^{N-1}_{n=0} \theta \circ T^n)^\infty_{N=0}$ converges to a stationary normal state (in the norm topology).
\end{Proposition}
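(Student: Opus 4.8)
The plan is to treat the two implications separately, pouring essentially all of the work into the ``in fact'' statement, since the Ces\`{a}ro convergence it asserts also yields the nontrivial direction of the equivalence. For the easy direction, if a stationary normal state $\phi$ exists then $\phi \circ T^n = \phi$ for every $n$, so the sequence $(\phi \circ T^n)^\infty_{n=0}$ is constant. As $\phi$ is normal, for each $\epsilon > 0$ there is a finite dimensional projection $p$ with $\phi(p) > 1-\epsilon$ (truncate the density operator of $\phi$), so this constant sequence is tight; hence $\theta := \phi$ is tight with respect to $T$, which is all that is required.

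For the converse I would set $\phi_N := \frac1N \sum_{n=0}^{N-1} \theta \circ T^n$ and pass to the predual, writing $S := T_*$ for the preadjoint on $\CB(\CH)_* \cong \CT(\CH)$, so that $S\psi = \psi \circ T$ and $\phi_N = A_N \theta$ with $A_N := \frac1N \sum_{n=0}^{N-1} S^n$. Since $T$ is unital and positive, $S$ is a positive contraction and therefore $\sup_N \|A_N\| \le 1$. First I would note that tightness passes from $(\theta \circ T^n)$ to the averages: if $\theta \circ T^n(p) > 1-\epsilon$ for all $n$, then $\phi_N(p) > 1-\epsilon$ for all $N$, so $(\phi_N)$ is tight with the same projection. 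By the non-commutative Prokhorov theorem \ref{th:prok}, $(\phi_N)$ is then relatively compact in the norm topology, and I would extract a subsequence $\phi_{N_k} \to \phi$ in norm; being a norm limit of normal states, $\phi$ is automatically a normal state.

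It then remains to identify $\phi$ as stationary and, above all, to upgrade this subsequential limit to convergence of the whole sequence. Stationarity follows from the Krylov--Bogolyubov estimate $\|\phi_N \circ T - \phi_N\| = \frac1N \|\theta \circ T^N - \theta\| \le \frac2N \to 0$, together with the fact that $S$ is a norm contraction on the predual, which forces $S\phi = \phi$ for any norm limit point $\phi$. The full convergence is the step I expect to be the main obstacle, since it is precisely the content of the mean ergodic theorem; I would handle it through the norm-closed subspace $M := \overline{(I-S)\CT(\CH)}$. A telescoping computation gives $\phi_N - \theta = A_N\theta - \theta = \frac1N\sum_{n=0}^{N-1}(S^n\theta - \theta) \in (I-S)\CT(\CH) \subseteq M$ for every $N$, so letting $N = N_k \to \infty$ and using that $M$ is closed yields $\theta - \phi \in M$. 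On the other hand, for $m = (I-S)\psi$ one computes $A_N m = \frac1N(\psi - S^N\psi)$, which tends to $0$ in norm, and the uniform bound $\|A_N\| \le 1$ extends this to all $m \in M$. Since $\phi$ is fixed, $A_N\phi = \phi$, and hence $\phi_N = A_N(\theta - \phi) + A_N\phi \to \phi$ in norm along the full sequence. This proves convergence to a stationary normal state and, in particular, produces the stationary state needed for the converse implication.
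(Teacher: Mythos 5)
Your proof is correct and follows essentially the same route as the paper: tightness passes to the Ces\`{a}ro averages, the non-commutative Prokhorov theorem (Theorem \ref{th:prok}) yields a norm-convergent subsequence, and the full-sequence convergence is exactly the mean ergodic theorem, which the paper simply cites from Krengel (Chapter 2, Theorem 1.1, applied to $T_*$) while you prove it inline via the standard splitting into fixed points and the closure of $(I-S)\CT(\CH)$. Your only structural deviation --- establishing stationarity of the subsequential limit first, via the estimate $\|\phi_N \circ T - \phi_N\| \le \frac{2}{N}$, so that $A_N\phi = \phi$ is available --- is a harmless reordering forced by making that ergodic step self-contained.
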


\begin{proof}
Obviously a stationary normal state is tight with respect to $T$. Conversely, suppose that $\theta$ is any normal state which is tight with respect to $T$. Then $( \theta \circ T^n)^\infty_{n=0}$ is a tight sequence and hence 
$(\frac{1}{N} \sum^{N-1}_{n=0} \theta \circ T^n)^\infty_{N=0}$
is also a tight sequence. We infer from Theorem \ref{th:prok} and its proof that there exists a normal state $\phi$ which is an accumulation point of this sequence. Now it follows from standard results in ergodic theory, see for example \cite{Kr85}, Chapter 2, Theorem 1.1 applied to the preadjoint $T_*$ of $T$, that we actually have
\[
\phi = \lim_{N \to \infty} 
\frac{1}{N} \sum^{N-1}_{n=0} \theta \circ T^n
\]
(in the norm topology). Further
\[
\| \phi \circ T - \phi \| \le \lim_{N \to \infty} \| \frac{1}{N} (\theta - \theta \circ T^N) \| = 0,
\]
so $\phi$ is stationary.
%\qed
\end{proof}

%%%%%%%%%%%%%%%%%%%%%%%%%%%%%%%%%%%%%%%%%%%%%%%%%%%%%%%%%%%%%%%%%%%%%%%%%%%%%%%%%
%%%%%%%%%%%%%%%%%%%%%%%%%%%%%%%%%%%%%%%%%%%%%%%%%%%%%%%%%%%%%%%%%%%%%%%%%%%%%%%%%

\begin{Proposition} \label{prop:order}
If $T$ is a normal unital positive map on $\CB(\CH)$ then there
exists a projection $p_t \in \CB(\CH)$ such that a normal
state $\theta$ is tight with respect to $T$ if and only if $\supp\,\theta \le p_t$.
\end{Proposition}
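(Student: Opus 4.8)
The plan is to reduce the statement to a claim about \emph{vectors} and then read off $p_t$ as the orthogonal projection onto a suitable subspace. It is convenient to extend the notion of tightness from Definition \ref{def:tight} to positive normal functionals $\psi$ verbatim (for all $\epsilon>0$ there is a finite-dimensional projection $p$ with $\psi\circ T^n(\eins-p)<\epsilon$ for all $n$); for states this is the original notion. I would then consider
\[
V := \{\xi\in\CH : \omega_\xi \text{ is tight with respect to } T\}\cup\{0\},
\]
show that $V$ is a \emph{closed linear subspace}, and define $p_t$ to be the projection onto $V$. The proposition would follow by decomposing an arbitrary normal state into its eigenvector states.

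The first ingredient is a heredity principle: if $0\le\psi\le\chi$ are positive normal functionals and $\chi$ is tight, then so is $\psi$, since $\psi\circ T^n(\eins-p)\le\chi\circ T^n(\eins-p)$ for every projection $p$ (here $T^n(\eins-p)\ge 0$). From this, $V$ is a subspace: stability under scalars is clear, and for sums one uses $\omega_{\xi+\eta}\le 2\,\omega_\xi+2\,\omega_\eta$, a consequence of $2\operatorname{Re}\langle\xi,a\eta\rangle\le\langle\xi,a\xi\rangle+\langle\eta,a\eta\rangle$ for $a\ge0$; thus $\omega_{\xi+\eta}$ is dominated by a tight functional and is therefore tight.

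The main obstacle will be the \emph{closedness} of $V$, and this is where the argument really has to use the hypotheses on $T$. Since $V$ is a subspace it suffices to treat unit vectors $\xi_m\to\xi$ with all $\omega_{\xi_m}$ tight. I would fix $\epsilon>0$ and choose $m$ with $\|\xi-\xi_m\|<\epsilon/4$. The key point is that, because $T$ is unital and positive, $b:=T^n(\eins-p)$ satisfies $0\le b\le\eins$ for every finite-dimensional $p$ and every $n$, so that
\[
|\omega_\xi(b)-\omega_{\xi_m}(b)|\le 2\,\|\xi-\xi_m\|<\tfrac{\epsilon}{2}
\qquad\text{\emph{uniformly in }} n .
\]
Picking $p$ from the tightness of $\omega_{\xi_m}$ so that $\omega_{\xi_m}\bigl(T^n(\eins-p)\bigr)<\epsilon/2$ for all $n$ then yields $\omega_\xi\bigl(T^n(\eins-p)\bigr)<\epsilon$ for all $n$ with a \emph{single} $p$, i.e.\ $\omega_\xi$ is tight and $\xi\in V$. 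It is precisely the uniformity in $n$ coming from the contraction bound $\|b\|\le 1$ that makes this perturbation estimate survive the passage to the limit.

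Finally I would assemble both implications from the spectral decomposition $\theta=\sum_i\lambda_i\,\omega_{\xi_i}$ of a normal state (orthonormal $\xi_i$, $\lambda_i>0$, $\sum_i\lambda_i=1$, hence $\supp\theta=\bigvee_i p_{\xi_i}$). If $\theta$ is tight, then each $\lambda_i\omega_{\xi_i}\le\theta$ is tight by heredity, so $\xi_i\in V$ and $\supp\theta\le p_t$. Conversely, if $\supp\theta\le p_t$, then every $\xi_i\in V=p_t\CH$, so $\theta$ is a $\sigma$-convex combination of the tight states $\omega_{\xi_i}$; such combinations are tight by a routine tail estimate (split off finitely many summands carrying mass $>1-\epsilon/2$, take the join of their finitely many tightness projections, and bound the residual mass by $\epsilon/2$). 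This establishes that $\theta$ is tight with respect to $T$ if and only if $\supp\theta\le p_t$.
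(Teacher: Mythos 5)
Your proof is correct, but it takes a genuinely different route from the paper. The paper reduces the statement to a cited structural result (Davies, \emph{Quantum Theory of Open Systems}, Chapter 4, Lemma 3.2): norm-closed order ideals in the normal positive functionals on $\CB(\CH)$ are exactly the sets $\{\tau : \supp\tau \le p\}$ for a projection $p$; the whole proof then consists of verifying that the tight functionals form a cone, are norm closed, and are hereditary ($0\le\rho\le\theta$ tight implies $\rho$ tight). Your argument bypasses Davies entirely and constructs $p_t$ by hand as the projection onto the closed subspace $V$ of vectors with tight vector states, and the three ingredients you supply are in fact the vector-level shadows of the paper's three verifications: your heredity principle is the order-ideal property (proved the same way, via positivity of $T^n(\eins-p)$); your subspace property, via the domination $\omega_{\xi+\eta}\le 2\,\omega_\xi+2\,\omega_\eta$ and the join of two tightness projections, plays the role of the cone property; and your closedness of $V$, resting on the uniform-in-$n$ bound $|\omega_\xi(b)-\omega_{\xi_m}(b)|\le 2\|\xi-\xi_m\|$ for $0\le b=T^n(\eins-p)\le\eins$, is the analogue of the paper's norm-closedness of the ideal (the paper's estimate $\theta\circ T^n(p)>(1-\delta)(1-\epsilon)-\delta$ exploits exactly the same contraction bound, so your observation that unitality of $T$ is the engine of the limit argument is on target). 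What you need beyond the paper's checks is the final assembly through the spectral decomposition $\theta=\sum_i\lambda_i\omega_{\xi_i}$, including the eigenvector domination $\lambda_i\omega_{\xi_i}\le\theta$ and the tail estimate for $\sigma$-convex combinations, steps which Davies' lemma renders unnecessary; in exchange, your proof is self-contained and effectively reproves the special case of that lemma you need. Both approaches are genuinely tied to $\CB(\CH)$: the paper through the hypothesis of Davies' lemma, yours through the vector-state and density-matrix structure.
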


\begin{proof}
We make use of the following result in \cite{Da76}, Chapter 4, Lemma 3.2: For any norm closed order ideal $\mathcal{O}$ in the set of all normal positive linear functionals on $\CB(\CH)$ there exists a projection $p \in \CB(\CH)$ so that a normal positive linear functional $\tau$ on $\CB(\CH)$ is in $\mathcal{O}$ if and only if
$\supp\,\tau \le p$. Recall that by definition $\mathcal{O}$ is an order ideal if it is a cone such that $0 \le \rho \le \tau \in \mathcal{O}$ implies $\rho \in \mathcal{O}$.

Hence Proposition \ref{prop:order} is proved if we can show that
the set $\mathcal{O}$ of all functionals $c \theta$ with $0 \le c \in \Rset$ and $\theta$ tight with respect to T is a norm closed order ideal. 

$\mathcal{O}$ is a cone. In fact, if $0 \not= \theta_1, \theta_2 \in \mathcal{O}$ then for any $\epsilon >0$ there exist finite dimensional projections $p_1, p_2$ such that $\theta_i \circ T^n (p_i) > 
\|\theta_i\| \, (1-\epsilon)$ for $i=1,2$ and all $n$. The supremum $p := p_1 \vee p_2$
is also a finite dimensional projection and we get for all $\lambda_1, \lambda_2 \ge 0$ that
\[
(\lambda_1 \theta_1 + \lambda_2 \theta_2) \circ T^n (p) 
> (\lambda_1 \|\theta_1\| + \lambda_2 \|\theta_2\|) \,(1-\epsilon)
= \| \lambda_1 \theta_1 + \lambda_2 \theta_2 \| \, (1-\epsilon),
\]
hence $\lambda_1 \theta_1 + \lambda_2 \theta_2 \in \mathcal{O}$.

$\mathcal{O}$ is norm closed. In fact, suppose that $\theta$ is a normal state such that for all $\delta >0$ there exist $\theta_\delta 
\in \mathcal{O}$ such that $\| \theta - \theta_\delta \| < \delta$.
Note that $\| \theta_\delta \| > 1 - \delta$. For any $\epsilon >0$
there exists a finite dimensional projection $p$ so that
$\theta_\delta \circ T^n (p) > \| \theta_\delta \|\,(1-\epsilon)$ for all $n$. Then 
\[
\theta \circ T^n (p) = \theta_\delta \circ T^n(p) + (\theta-\theta_\delta) \circ T^n (p)
> (1-\delta) \,(1-\epsilon) - \delta
\]
for all $n$. Hence $ \theta \in \mathcal{O}$. A similar argument applies to any multiple $c \theta$ with $c >0$. 

Finally suppose that $0 \le \rho \le \theta \in \mathcal{O}$. We may assume that $\rho \not=0$ and $\theta$ is a state. Because $\theta$ is tight, for all $\epsilon >0$ there exists a finite dimensional projection $q$ such that
$\theta \circ T^n (q) > 1 - \| \rho \|\, \epsilon$ for all $n$. Then
\[
1 - \| \rho \|\, \epsilon < \theta \circ T^n (q) =
\|\rho\| \,\big( \frac{1}{\|\rho\|} \rho \circ T^n(q) \big)
+ \| \theta-\rho \| \,\big( \frac{1}{\|\theta-\rho\|} (\theta-\rho) \circ T^n(q)\big).
\]
Because $\|\theta-\rho\| = (\theta-\rho)(\eins) = 1 - \|\rho\|$ we get for all $n$ that
\[
\frac{1}{\|\rho\|} \rho \circ T^n(q) > \frac{1}{\|\rho\|} \,\big(
1 - \| \rho \| \epsilon - (1 - \|\rho\|) \big) = 1 - \epsilon,
\]
hence $\rho \in \mathcal{O}$.
%\qed
\end{proof}

%%%%%%%%%%%%%%%%%%%%%%%%%%%%%%%%%%%%%%%%%%%%%%%%%%%%%%%%%%%%%%%%%%%%%%%%%%%%%%%%%
%%%%%%%%%%%%%%%%%%%%%%%%%%%%%%%%%%%%%%%%%%%%%%%%%%%%%%%%%%%%%%%%%%%%%%%%%%%%%%%%%

\begin{Corollary} \label{cor:faithful}
If there exists a faithful normal state which is tight with respect to a normal unital positive map $T$ on $\CB(\CH)$ then all normal states are tight with respect to $T$.
\end{Corollary}

%%%%%%%%%%%%%%%%%%%%%%%%%%%%%%%%%%%%%%%%%%%%%%%%%%%%%%%%%%%%%%%%%%%%%%%%%%%%%%%%%
%%%%%%%%%%%%%%%%%%%%%%%%%%%%%%%%%%%%%%%%%%%%%%%%%%%%%%%%%%%%%%%%%%%%%%%%%%%%%%%%%

\begin{Definition}[\cite{FR02}] \label{def:harmonic}
Let $T$ be a normal unital positive map on a von Neumann algebra $\CA$. A positive element $a \in \CA$ is called
\begin{eqnarray*}
\text{superharmonic} & \text{if} & a \ge T(a), \\
\text{subharmonic} & \text{if} & a \le T(a), \\
\text{harmonic} & \text{if} & a = T(a). 
\end{eqnarray*}
\end{Definition}

%%%%%%%%%%%%%%%%%%%%%%%%%%%%%%%%%%%%%%%%%%%%%%%%%%%%%%%%%%%%%%%%%%%%%%%%%%%%%%%%%
%%%%%%%%%%%%%%%%%%%%%%%%%%%%%%%%%%%%%%%%%%%%%%%%%%%%%%%%%%%%%%%%%%%%%%%%%%%%%%%%%

\begin{Proposition} \label{prop:harmonic}
Let $T$ be a normal unital positive map on a von Neumann algebra $\CA$ and let $p \in \CA$ be a projection.
\begin{itemize}
\item[(1)]
$p$ is superharmonic if and only if 
$T(pap) = p\, T(pap)\, p$ for all $a \in \CA$.
\item[(2)]
$p$ is subharmonic if and only if
$p\, T(a)\, p = p\, T(pap)\, p$ for all $a \in \CA$.
\item[(3)]
$p$ is harmonic if and only if $p\, T(a)\, p = T(pap)$ for all $a \in \CA$.
\item[(4)]
The support $\supp\,\phi$ of a stationary normal state $\phi$ is subharmonic.
\item[(5)]
If there exists a stationary faithful state for $T$ then all positive superharmonic or subharmonic elements are harmonic.
\end{itemize}
\end{Proposition}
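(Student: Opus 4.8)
The plan is to prove Proposition \ref{prop:harmonic} parts (1)--(5) in order, since later parts rely on the algebraic identities established in the earlier ones. The common thread in (1)--(3) is to exploit the Schwarz-type inequality valid for a unital positive map, namely $T(a)^* T(a) \le T(a^* a)$ when $a$ is in the multiplicative domain, together with positivity applied to the off-diagonal corners $pa(\eins-p)$ and $(\eins-p)ap$.

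For (1), I would argue as follows. Assume $p$ is superharmonic, i.e. $p \ge T(p)$, equivalently $\eins - p \le T(\eins-p)$, equivalently $(\eins-p)T(\eins-p)(\eins-p)$ absorbs the relevant mass. The key computation is to show $(\eins-p)\,T(pap)\,(\eins-p) = 0$ and the symmetric corners vanish. Applying the Kadison--Schwarz inequality to the positive element $b := pap$ (or more directly to $p$ itself), one gets $T(p)^2 \le T(p)$ from $p \ge T(p) \ge T(p)^2 \ge 0$, which forces $(\eins-p)\,T(p)\,(\eins-p)=0$; since $0 \le T(pap) \le \|a\|\,T(p)$ for $a\ge 0$, the corner $(\eins-p)T(pap)(\eins-p)$ also vanishes, and positivity of $T(pap)$ then kills the remaining off-diagonal corners. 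This yields $T(pap) = p\,T(pap)\,p$. The converse is immediate by taking $a = \eins$. Part (2) is the mirror image: $p$ subharmonic means $p \le T(p)$, and I would show $p\,T((\eins-p)a(\eins-p))\,p = 0$ by the same corner-vanishing argument applied now to $\eins - p$, then expand $pT(a)p = pT(pap + pa(\eins-p) + \ldots)p$ and discard every term involving a factor of $\eins - p$. For (3), harmonicity is the conjunction $p \ge T(p)$ and $p \le T(p)$, so both (1) and (2) hold simultaneously; combining $T(pap) = pT(pap)p$ with $pT(a)p = pT(pap)p$ gives $pT(a)p = T(pap)$ directly.

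For (4), I would use that $\phi$ stationary and $p = \supp\phi$ means $\phi(\eins-p)=0$, hence $\phi(T(\eins - p)) = (\phi\circ T)(\eins-p) = \phi(\eins-p) = 0$; since $T(\eins-p)\ge 0$ and $\phi$ has support $p$, this forces $p\,T(\eins-p)\,p = 0$, i.e. $p\,T(p)\,p = p\,T(\eins)\,p = p$ using unitality $T(\eins)=\eins$. Wait --- one must be careful: from $\phi(T(\eins-p))=0$ and faithfulness of $\phi$ on $p\CB(\CH)p$ I conclude $p\,T(\eins-p)\,p = 0$, equivalently $p\,T(p)\,p = p$. This is precisely the corner condition that, together with the Schwarz inequality, characterizes subharmonicity; more directly, $p \le T(p)$ follows because $T(p)\ge p\,T(p)\,p = p$ would need the off-diagonal control, so I would instead invoke part (2)'s characterization in reverse or argue that $p\,T(p)\,p = p$ combined with $0\le T(p)\le \eins$ gives, via $\langle\eta, T(p)\eta\rangle \ge \langle\eta,p\eta\rangle$ for $\eta$ in the range of $p$ and the operator inequality extending off the range, the full $T(p)\ge p$. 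I expect this to be the delicate point and would route it through the already-proven equivalence in (2).

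Finally for (5), suppose $\phi$ is a faithful stationary state and $a\ge 0$ is superharmonic, so $a \ge T(a)$ and thus $a - T(a)\ge 0$. Applying the faithful stationary state, $\phi(a - T(a)) = \phi(a) - \phi(T(a)) = \phi(a) - \phi(a) = 0$; since $\phi$ is faithful and $a - T(a)$ is a positive element with $\phi$-value zero, faithfulness forces $a - T(a) = 0$, i.e. $a$ is harmonic. The subharmonic case is identical with the sign reversed: $T(a) - a \ge 0$ has zero $\phi$-value, hence vanishes. The main obstacle I anticipate is the careful corner analysis in (1) and (2) --- ensuring that positivity of $T(pap)$ genuinely propagates from the vanishing of a single diagonal corner to the vanishing of all three complementary corners --- whereas (4) and especially (5) are short once faithfulness and stationarity are combined with the elementary fact that a faithful state vanishing on a positive element forces that element to be zero.
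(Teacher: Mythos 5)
Parts (1), (3) and (5) of your plan are correct and essentially identical to the paper's proofs (in (1) your appeal to Kadison--Schwarz is unnecessary: $0\le T(pap)\le \|a\|\,T(p)\le\|a\|\,p$ for $a\ge 0$ already forces $T(pap)=p\,T(pap)\,p$, since a positive operator whose compression by $\eins-p$ vanishes is supported under $p$; the converse is indeed just $a=\eins$). The genuine gaps are in (2) and in how (4) leans on it. First, in the forward direction of (2) you ``discard every term involving a factor of $\eins-p$'', i.e.\ you need $p\,T(pa(\eins-p))\,p=0$, and nothing you have set up delivers this: the Schwarz inequality $T(a)^*T(a)\le T(a^*a)$ that you cite holds for a merely \emph{positive} unital map only on selfadjoint (normal) elements --- for arbitrary $a$ it requires $2$-positivity, which is not assumed --- and the corners $pa(\eins-p)$ are not selfadjoint, so the corner-vanishing mechanism from (1), which rested on the positivity of the single element $T(pap)$, has no counterpart here. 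The paper avoids the problem entirely by passing to the predual: for any normal state $\theta$ set $\theta_p:=\theta(p\,\cdot\,p)$; subharmonicity gives $T_*\theta_p(\eins-p)=\theta_p(T(\eins-p))\le\theta_p(\eins-p)=0$, hence $\supp(T_*\theta_p)\le p$, hence $\theta(p\,T(a)\,p)=T_*\theta_p(a)=T_*\theta_p(pap)=\theta(p\,T(pap)\,p)$ for all $a$ and all $\theta$, which kills all three complementary corners at once. Your route can be repaired the same way (a Cauchy--Schwarz estimate for the positive functional $\theta(p\,T(\cdot)\,p)$, which vanishes at $\eins-p$), but that passage to functionals is precisely the missing idea.

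Second, your treatment of (2) contains only the direction ``subharmonic $\Rightarrow$ identity''; the converse never appears, yet in (4) you propose to ``invoke part (2)'s characterization in reverse'' and to ``route it through the already-proven equivalence in (2)'' --- so (4) rests on a step you have not proved, and you explicitly flag the upgrade from the corner identity $p\,T(p)\,p=p$ to the operator inequality $T(p)\ge p$ as the unresolved delicate point. The resolution is short and worth knowing: from $p\,T(\eins-p)\,p=0$ and $T(\eins-p)\ge 0$ one gets $\bigl(T(\eins-p)^{1/2}p\bigr)^*\bigl(T(\eins-p)^{1/2}p\bigr)=p\,T(\eins-p)\,p=0$, hence $T(\eins-p)\,p=0$, hence $T(\eins-p)=(\eins-p)\,T(\eins-p)\,(\eins-p)\le\|T(\eins-p)\|\,(\eins-p)\le\eins-p$, i.e.\ $T(p)\ge p$. (The paper reaches $T(\eins-p)\,p=0$ instead via Kadison--Schwarz applied to the selfadjoint element $\eins-p$; either works.) Note also that the paper proves (4) directly without any detour through (2): stationarity gives $\varphi(T(p_\varphi^\perp))=\varphi(p_\varphi^\perp)=0$, so $p_\varphi\,T(p_\varphi^\perp)\,p_\varphi=0$, and the same elementary fact (for $x\ge0$, $pxp=0$ forces $xp=0$) yields $T(p_\varphi^\perp)\le p_\varphi^\perp$, i.e.\ $p_\varphi\le T(p_\varphi)$ --- this also rescues your half-finished vector-expectation argument, since $x\ge 0$ together with $\langle\eta,x\,\eta\rangle=0$ forces $x^{1/2}\eta=0$, not merely a statement on the range of $p$.
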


\begin{proof}
The nontrivial parts can be derived from Lemma 2 in \cite{Lu95},
see also Section 2 of \cite{FR02} or \cite{GK12}. For convenience we include a short proof which is valid for our setting.

(1) Inserting $a=\eins$ into the right hand side yields $T(p) = p\,T(p)\,p$, hence $T(p) \le \|T(p)\|\,p \le p$ and $p$ is superharmonic.
Conversely, if $p$ is superharmonic then for $a \ge 0$
\[
0 \le T(pap) \le \|a\| \,T(p) \le \|a\| \,p
\]
which implies $T(pap) = p\, T(pap)\, p$. Because all $a \in \CA$ are linear combinations of positive elements this is valid for all $a$.

%% Neu: `right hand side' habe ich gestrichen, wir setzen es doch in beide Seiten ein.
(2) Inserting $a=\eins$ yields $p = p\,T(p)\,p$, hence 
\[
0 = p\, T(\eins-p) \,p \ge \big( T(\eins-p)p \big)^* \big( T(\eins-p)p \big) \ge 0,
\]
which implies $T(\eins-p)\,p=0$. Here we have used the Kadison-Schwarz inequality
for the selfadjoint projection $\eins-p$, 
see
\cite{St13}, Theorem 1.3.1(ii).  
We conclude that
\[
T(\eins-p) = (\eins-p)\, T(\eins-p)\, (\eins-p) \le \| T(\eins-p) \|\, (\eins-p) \le \eins-p,
\]
hence $p \le T(p)$ and $p$ is subharmonic. 

Conversely, if $p$ is subharmonic and $\theta$ is any normal state then we define
the normal positive functional $\theta_p(\cdot) := \theta(p \cdot p)$. Note that 
\[
T_* \theta_p(\eins-p) = \theta_p (T(\eins-p)) \le \theta_p (\eins-p) =0,
\]
so $\supp(T_* \theta_p) \le p$ and $T_* \theta_p = (T_* \theta_p)_p$. Inserting the definitions gives
$\theta (p\, T(a)\, p) = \theta (p\, T(pap)\, p)$ for all $a \in \CA$.
Because this is true for all $\theta$ we get the stated result.

(3) follows by combining (1) and (2). 

(4) Putting $p_\varphi:=\supp \varphi$ and $p_\varphi^\perp:=\eins - p_\varphi$, we have $0 \le T(p_\varphi^\perp) \le \eins$ and
$\varphi ( T(p_\varphi^\perp)) = \varphi (p_\varphi^\perp) = 0$,
hence $T(p_\varphi^\perp) \le p_\varphi^\perp$ and thus
$p_\varphi \le T(p_\varphi)$.

(5) If $ 0 \le a \in \CA$ is superharmonic and $\varphi$ is a stationary faithful state on $\CA$ then $0 \le \varphi (a - T(a))
= \varphi(a)-\varphi(a) = 0$ and hence $a = T(a)$. A similar argument applies to subharmonic elements. 
%\qed
\end{proof}

%%%%%%%%%%%%%%%%%%%%%%%%%%%%%%%%%%%%%%%%%%%%%%%%%%%%%%%%%%%%%%%%%%%%%%%%%%%%%%%%%
%%%%%%%%%%%%%%%%%%%%%%%%%%%%%%%%%%%%%%%%%%%%%%%%%%%%%%%%%%%%%%%%%%%%%%%%%%%%%%%%%

\begin{Proposition} \label{prop:orthogonal}
Let $T$ be a normal unital positive map on a von Neumann algebra $\CA$ and $\varphi, \psi$ stationary normal states such that $\supp\psi$ is not dominated by $\supp\varphi$. Then there exists a stationary normal state $\varphi_\perp$ such that
$\supp\varphi \perp \supp\varphi_\perp$.
\end{Proposition}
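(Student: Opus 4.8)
The plan is to realise $\varphi_\perp$ as the canonical extension of a compression of the given stationary state $\psi$ to the corner cut out by $\supp\varphi$, and then to deduce its stationarity from an invariance property inside that corner together with a Cauchy--Schwarz argument. I first set $p := \supp\varphi$. By Proposition \ref{prop:harmonic}(4) the projection $p$ is subharmonic, hence $q := \eins - p$ is superharmonic, $T(q) \le q$. By Proposition \ref{prop:harmonic}(1) superharmonicity of $q$ yields $T(q a q) = q\,T(q a q)\,q$ for all $a \in \CA$, so the corner $q\CA q$ is invariant under $T$ and $S := T|_{q\CA q}$ is a normal positive map on the von Neumann algebra $q\CA q$ with $S(q) = T(q) \le q$ (sub-unital). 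The hypothesis $\supp\psi \not\le p$ says precisely that $\psi(q) > 0$. Restricting $\psi$ and normalising, I obtain a normal state $\tilde\varphi(x) := \psi(x)/\psi(q)$ on $q\CA q$ which is $S$-invariant: for $b \in q\CA q$ one has $S(b) = T(b) \in q\CA q$ and $\tilde\varphi(S(b)) = \psi(T(b))/\psi(q) = \psi(b)/\psi(q) = \tilde\varphi(b)$ by stationarity of $\psi$. I then extend it to $\CA$ by setting $\varphi_\perp(a) := \tilde\varphi(q a q)$; this is a normal state with $\varphi_\perp(p) = 0$, so $\supp\varphi_\perp \le q$ and thus $\supp\varphi_\perp \perp \supp\varphi$, exactly as required.

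It remains to check that $\varphi_\perp$ is stationary for $T$, and this is where the actual content lies. I introduce the auxiliary functional $F := \varphi_\perp \circ T$, that is $F(a) = \tilde\varphi(q\,T(a)\,q)$. Since $T$, compression by $q$, and $\tilde\varphi$ are all positive and $T$ is unital, $F$ is again a normal state on $\CA$, so the Cauchy--Schwarz inequality $|F(y^*x)|^2 \le F(y^*y)\,F(x^*x)$ is available. Decomposing a general $a$ into its four corner blocks $qaq,\ pap,\ paq,\ qap$, the diagonal block is immediate: by superharmonicity $q\,T(qaq)\,q = T(qaq) = S(qaq)$, whence $F(qaq) = \tilde\varphi(S(qaq)) = \tilde\varphi(qaq) = \varphi_\perp(a)$ using $S$-invariance. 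Consequently $F(a) - \varphi_\perp(a) = F(pap) + F(paq) + F(qap)$, and everything reduces to showing that these three terms vanish; this is the main obstacle, since the cross and $p$-corner blocks involve $T$ off the invariant corner where $S$-invariance gives no direct control.

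The point on which the whole argument turns is the single identity $F(p) = 0$. Indeed $q\,T(p)\,q = q(\eins - T(q))q = q - T(q) = q - S(q)$, so $F(p) = \tilde\varphi(q - S(q)) = \tilde\varphi(q) - \tilde\varphi(S(q)) = 1 - 1 = 0$, the last step being $S$-invariance applied to the unit $q$ of the corner. With $F(p) = 0$ at hand, each remaining block is annihilated by Cauchy--Schwarz, arranged so that $p$ occurs as one of the two factors: for instance $|F(pap)|^2 \le F(p)\,F(pa^*ap) = 0$ and $|F(paq)|^2 \le F(p)\,F(qa^*aq) = 0$ (taking $y^* = p$), while $|F(qap)|^2 \le F(qaa^*q)\,F(p) = 0$ (taking $x = p$). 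Hence $F(pap) = F(paq) = F(qap) = 0$, so $F = \varphi_\perp$, i.e. $\varphi_\perp \circ T = \varphi_\perp$. Thus $\varphi_\perp$ is a stationary normal state with $\supp\varphi_\perp \perp \supp\varphi$, completing the proof. I note that this argument uses only that $\CA$ is a von Neumann algebra and never the special structure of $\CB(\CH)$.
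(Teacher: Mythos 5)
Your proof is correct, and it reaches the conclusion by a genuinely different route from the paper --- even though, amusingly, it constructs the very same state: the paper compresses $\theta=\frac12(\varphi+\psi)$ by $q'=\supp\theta-\supp\varphi$, but since $\varphi$ annihilates the corner of $q'$ and $\psi$ vanishes on $\eins-\supp\theta$, the functional $\theta(q'\,\cdot\,q')$ equals $\frac12\,\psi(q\,\cdot\,q)$ with your $q=\eins-\supp\varphi$, so after normalisation both constructions yield $\psi(q\,\cdot\,q)/\psi(q)$. The difference lies entirely in how stationarity is verified. The paper passes to the corner $p\CA p$ with $p=\supp\theta$, on which $\theta$ restricts to a \emph{faithful} stationary state for the compressed map $T_p$; Proposition \ref{prop:harmonic}(5) then upgrades the relevant sub- and superharmonic projections to harmonic ones for $T_p$, and stationarity of $\theta(q'\,\cdot\,q')$ drops out of the compression identities of Proposition \ref{prop:harmonic}. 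You bypass the auxiliary state $\theta$ and the harmonicity upgrade entirely: you use only parts (1) and (4) of Proposition \ref{prop:harmonic} (to see that the corner $q\CA q$ is $T$-invariant and that $\tilde\varphi=\psi(\,\cdot\,)/\psi(q)$ is invariant there), then establish the single vanishing identity $F(p)=0$ for $F=\varphi_\perp\circ T$ directly from $\psi\circ T=\psi$, and let the Cauchy--Schwarz inequality for the \emph{state} $F$ annihilate the three corner blocks of $a$ containing $p$ --- I checked the pairings $|F(pap)|^2\le F(p)F(pa^*ap)$, $|F(paq)|^2\le F(p)F(qa^*aq)$, $|F(qap)|^2\le F(qaa^*q)F(p)$, and they are all correctly arranged. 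This buys a real economy: since Cauchy--Schwarz is applied to the positive functional $F$ rather than to the map $T$, you never need a Schwarz-type operator inequality for the merely positive $T$, whereas the paper's route ultimately rests on Proposition \ref{prop:harmonic}(2), whose proof invokes the Kadison--Schwarz inequality. What the paper's proof buys in return is that it runs entirely on the harmonic-projection calculus of Proposition \ref{prop:harmonic}, developed there as a reusable tool (compare its role in Proposition \ref{prop:absorbing-criteria}); your argument is more self-contained and, as you correctly note, equally general, valid for any normal unital positive map on any von Neumann algebra.
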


\begin{proof}
Let $p := \supp\theta$ with $\theta := \frac{1}{2} (\varphi + \psi)$. Then we infer from the assumptions about the supports that $\supp\varphi \le p$ and $q := p- \supp\varphi \not=0$. Now we define
\[
T_p: p \CA p \rightarrow p \CA p, \quad y \mapsto p\, T(y) p,
\]
which is a normal unital (on $p \CA p$) positive map with
a stationary faithful normal state $\theta |_{p \CA p}$. Note that, by Proposition \ref{prop:harmonic}(4), $p$ and $\supp\,\varphi$ are subharmonic and hence $q$ is superharmonic for both $T$ and $T_p$. By Proposition \ref{prop:harmonic}(5) these projections are harmonic for $T_p$. 

Claim: The normal positive functional $\theta_q(\cdot) := \theta( q \cdot q)$ is stationary for $T$. This is shown by the following computation which involves Proposition \ref{prop:harmonic}(1) for $T$ and Proposition \ref{prop:harmonic}(3) for $T_p$. Let $a \in \CA$. Then
\[
\theta_q \circ T(a) = \theta (q \,T(a) q) = \theta (qp \,T(a) pq)
= \theta (qp \,T(pap) pq) 
\]
\[
= \theta (q \,T_p(pap) q)
= \theta (T_p(qpapq)) = \theta (T_p(qaq)) = \theta(qaq) = \theta_q(a)\,.
\]
Because $q \perp \supp\varphi$ we can now define $\varphi_\perp := \frac{\theta_q}{\|\theta_q\|}$.
%\qed
\end{proof}
For completely positive $T$ this result follows also from \cite{GK12}, Theorem 7.1.

Finally we include a few results about absorbing states which will be needed later. The following proposition is well known.

%%%%%%%%%%%%%%%%%%%%%%%%%%%%%%%%%%%%%%%%%%%%%%%%%%%%%%%%%%%%%%%%%%%%%%%%%%%%%%%%%
%%%%%%%%%%%%%%%%%%%%%%%%%%%%%%%%%%%%%%%%%%%%%%%%%%%%%%%%%%%%%%%%%%%%%%%%%%%%%%%%%

\begin{Proposition} [\cite{Go04}, A.5.2] \label{prop:absorbing}
Let $T$ be a normal unital positive map on $\CB(\CH)$ and $\xi \in \CH$ a unit vector.
Let $\omega_\xi$ be the corresponding vector state with (one-dimensional) support $p_\xi$.

The following assertions are equivalent:
\begin{itemize}
\item[(a1)]
The state $\omega_\xi$ is absorbing, i.e., for all normal states $\theta$ on $\CB(\CH)$ and all $a \in \CB(\CH)$
\[
\lim_{n\to\infty} \theta \circ T^n(a) = \omega_\xi(a)\,.
\]
\item[(a2)]
For all normal states $\theta$ and all $a \in \CB(\CH)$ in the norm topology
\[
\lim_{n\to\infty} \theta \circ T^n = \omega_\xi\,.
\]
\item[(b)]
$T$ is ergodic, i.e., there are only trivial fixed points (namely $\Cset \eins$), \\
and $\omega_\xi$ is stationary.
\item[(c1)]
$\lim_{n\to\infty} T^n(p_\xi) = \eins$ in the strong operator topology
\item[(c2)]
$\lim_{n\to\infty} T^n(p_\xi) = \eins$ in the weak$^*$ topology
\end{itemize}
If we replace $\omega_\xi$ by an arbitrary normal state $\varphi$ (not necessarily a vector state)
then we still have
\[
(a1) \Leftrightarrow (a2) \Rightarrow (b) \Rightarrow (c1) \Leftrightarrow (c2).
\]
\end{Proposition}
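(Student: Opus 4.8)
The plan is to prove the chain of implications $(a1)\Leftrightarrow(a2)\Rightarrow(b)\Rightarrow(c1)\Leftrightarrow(c2)$ for a general normal state $\varphi$, and then establish the two extra equivalences $(b)\Rightarrow(a1)$ and $(c2)\Rightarrow(b)$ that hold in the vector-state case. The equivalence $(a1)\Leftrightarrow(a2)$ is immediate from Lemma \ref{lem:compact}: weak$^*$ convergence of normal states to a normal state upgrades automatically to norm convergence, since $\CB(\CH)$ is atomic. For $(a2)\Rightarrow(b)$, stationarity of $\omega_\xi$ (or $\varphi$) is obtained by applying $(a1)$ with $\theta=\omega_\xi\circ T$ and comparing limits; ergodicity follows because any fixed point $a=T(a)$ forces $\theta\circ T^n(a)=\theta(a)$ for all $n$, so the limit $\omega_\xi(a)$ must equal $\theta(a)$ for every normal state $\theta$, which pins $a$ down to a scalar multiple of $\eins$.

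For $(b)\Rightarrow(c1)$ I would argue as follows. Since $\omega_\xi$ is stationary, Proposition \ref{prop:harmonic}(4) tells us that $p_\xi=\supp\omega_\xi$ is subharmonic, i.e.\ $p_\xi\le T(p_\xi)$. Hence the sequence $T^n(p_\xi)$ is increasing and bounded above by $\eins$, so it converges in the strong operator topology to some positive element; its weak$^*$ limit is a harmonic element $0\le h\le\eins$ with $h\ge p_\xi$. Ergodicity forces $h\in\Cset\eins$, and since $h\ge p_\xi\neq 0$ we must have $h=\eins$, giving $(c1)$. The equivalence $(c1)\Leftrightarrow(c2)$ is routine: strong operator convergence of a bounded increasing net implies weak$^*$ convergence, and conversely weak$^*$ convergence of the monotone sequence $T^n(p_\xi)$ to $\eins$ forces strong operator convergence because the limit is a projection (here monotonicity is what makes the two topologies agree on the limit).

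The two implications that require the vector-state hypothesis and close the full cycle are $(c2)\Rightarrow(b)$ and $(b)\Rightarrow(a1)$. For $(c2)\Rightarrow(b)$ one uses that $p_\xi$ is one-dimensional: from $T^n(p_\xi)\to\eins$ weak$^*$ one first recovers stationarity of $\omega_\xi$, and then ergodicity from the fact that a harmonic positive element $a$ satisfies, via the one-dimensionality $p_\xi a p_\xi=\omega_\xi(a)p_\xi$ together with Proposition \ref{prop:harmonic}(3) applied iteratively, the relation $\omega_\xi(a)\eins=\lim_n T^n(p_\xi a p_\xi)=a$. For $(b)\Rightarrow(a1)$, given $(c1)$ already in hand, I would estimate using Lemma \ref{lem:epsilon} with $q=T^n(p_\xi)$ close to $\eins$: for any normal state $\theta$ and any $a$, $\theta\circ T^n(a)=\theta(T^n(a))$ is squeezed so that $T^n(p_\xi)\,T^n(a)\,T^n(p_\xi)$ dominates the behaviour, and using that $T^n$ is a Schwarz map one compares $T^n(p_\xi a p_\xi)$ with $T^n(p_\xi)T^n(a)T^n(p_\xi)$; since $p_\xi a p_\xi=\omega_\xi(a)p_\xi$ and $T^n(p_\xi)\to\eins$ strongly, the limit is $\omega_\xi(a)$ uniformly over states $\theta$.

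The main obstacle I expect is the two steps where one-dimensionality of $p_\xi$ is genuinely used, namely $(c2)\Rightarrow(b)$ and the uniform passage to the limit in $(b)\Rightarrow(a1)$: for a general normal state $\varphi$ these fail, and the delicate point is to convert strong operator convergence $T^n(p_\xi)\to\eins$ into \emph{uniform} (over all initial states $\theta$) convergence of the expectations $\theta\circ T^n(a)$. The relation $p_\xi a p_\xi=\omega_\xi(a)p_\xi$, valid only because $p_\xi$ is a rank-one projection, is exactly what collapses the asymptotics of $T^n(a)$ onto the single scalar $\omega_\xi(a)$; getting the error estimates to be independent of $\theta$ (as opposed to merely pointwise in $\theta$) is the technical heart, and this is where Lemma \ref{lem:epsilon} together with the Kadison--Schwarz inequality for $T^n$ does the essential work.
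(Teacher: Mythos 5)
The paper offers no proof of this proposition (it is quoted as well known from \cite{Go04}, A.5.2), so your proposal must stand on its own. Your skeleton for the general chain $(a1)\Leftrightarrow(a2)\Rightarrow(b)\Rightarrow(c1)\Leftrightarrow(c2)$ is essentially sound: Lemma \ref{lem:compact} gives $(a1)\Leftrightarrow(a2)$; for stationarity in $(a2)\Rightarrow(b)$ the comparison that actually works is between the two evaluations $\omega_\xi\circ T^n(T(a))\to\omega_\xi(T(a))$ and $\omega_\xi\circ T^{n+1}(a)\to\omega_\xi(a)$ of one and the same sequence (your phrasing is loose but the idea is there); and $(b)\Rightarrow(c1)$ via subharmonicity of the support, normality of $T$ applied to the increasing sequence $T^n(p_\xi)$, and ergodicity of the harmonic limit is correct. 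One repair in $(c1)\Leftrightarrow(c2)$: monotonicity is neither available in that standalone equivalence (no stationarity is assumed there) nor needed; what upgrades weak$^*$ to strong convergence is only $0\le T^n(p_\xi)\le\eins$ together with the limit being $\eins$, since $\|(\eins-T^n(p_\xi))^{1/2}\eta\|^2=\langle\eta,(\eins-T^n(p_\xi))\eta\rangle\to0$.

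The genuine gaps lie in the two steps closing the cycle for vector states. In $(c2)\Rightarrow(b)$ you assert that stationarity of $\omega_\xi$ is ``recovered'' without giving any mechanism, and your ergodicity argument invokes Proposition \ref{prop:harmonic}(3) iteratively for $p_\xi$; but that item applies to \emph{harmonic} projections, and under $(c2)$ the projection $p_\xi$ cannot be harmonic (its iterates converge to $\eins\neq p_\xi$ unless $\dim\CH=1$), so the equality $\lim_n T^n(p_\xi a p_\xi)=a$ for a fixed point $a$ is exactly what must be proved and follows from nothing you cite. In $(b)\Rightarrow(a1)$ you apply Lemma \ref{lem:epsilon} with $q=T^n(p_\xi)$, which is not a projection (fixable, as the lemma survives for $0\le q\le\eins$), but the serious problem is the comparison of $T^n(p_\xi a p_\xi)$ with $T^n(p_\xi)T^n(a)T^n(p_\xi)$: no such estimate follows from the Kadison--Schwarz inequality for a positive, or even Schwarz, unital map; a multiplicative-domain estimate of this kind would require $2$-positivity plus a state-dependent Cauchy--Schwarz argument you do not supply. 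Moreover the uniformity over $\theta$ you claim is false in general --- it would amount to norm convergence $T^n(a)\to\omega_\xi(a)\eins$, which fails already for classical pure death chains (cf. Section \ref{section:example}) --- and it is also not needed, since $(a1)$ is pointwise in $\theta$ and $(a2)$ concerns the fixed functional $\theta\circ T^n$, handled by Lemma \ref{lem:compact}. The clean closure avoids all of this: prove $(c2)\Rightarrow(a1)$ directly by applying Lemma \ref{lem:epsilon} with the \emph{fixed} rank-one projection $q=p_\xi$ to the states $\psi_n:=\theta\circ T^n$. Indeed $(c2)$ says precisely that $\psi_n(p_\xi)=\theta(T^n(p_\xi))\to1$ for every normal $\theta$, whence $|\psi_n(a)-\omega_\xi(a)\psi_n(p_\xi)|=|\psi_n(a)-\psi_n(p_\xi a p_\xi)|\le 3\sqrt{1-\psi_n(p_\xi)}\,\|a\|\to0$, using $p_\xi a p_\xi=\omega_\xi(a)p_\xi$; then $(b)$ follows from $(a1)$ by your own earlier argument, and no Schwarz-type inequality is required anywhere. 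This is also the technique the paper itself uses in the proof of Proposition \ref{prop:faithful}.
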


%%%%%%%%%%%%%%%%%%%%%%%%%%%%%%%%%%%%%%%%%%%%%%%%%%%%%%%%%%%%%%%%%%%%%%%%%%%%%%%%%
%%%%%%%%%%%%%%%%%%%%%%%%%%%%%%%%%%%%%%%%%%%%%%%%%%%%%%%%%%%%%%%%%%%%%%%%%%%%%%%%%

\begin{Proposition} \label{prop:absorbing-criteria}
Let $T$ be a normal unital positive map on $\CB(\CH)$ such that all normal states are tight with respect to $T$. 
\begin{itemize}
\item[(i)]
If there is a stationary vector state which is the only stationary normal state for $T$ then it is absorbing.
\item[(ii)]
Suppose the vector state $\omega_\xi$ is stationary and let $p_\xi$
be its support. Then 
$\omega_\xi$ is absorbing if and only if for every normal state $\theta$ on $\CB(\CH)$ there exists $n \in \Nset$ such that
$\theta \circ T^n(p_\xi) \not=0$.
\end{itemize}
\end{Proposition}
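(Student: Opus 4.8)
The plan is to establish (i) directly and then deduce (ii) from it. The guiding idea throughout is that tightness of all normal states lets me manufacture stationary states as Ces\`aro limits (Proposition \ref{prop:existence}), while a \emph{harmonic} element is invisible to such averaging, so its varying expectations survive into genuinely different stationary states.

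For (i), assume $\omega_\xi$ is the unique stationary normal state and suppose, for a contradiction, that it is not absorbing. Since $\omega_\xi$ is stationary, the equivalence (a1)$\Leftrightarrow$(b) in Proposition \ref{prop:absorbing} forces $T$ to fail to be ergodic, so there is a fixed point outside $\Cset\eins$. As $T$ is positive, hence $*$-preserving, its real and imaginary parts are again fixed, so I may take a self-adjoint fixed point, and after adding a suitable multiple of $\eins$ I obtain a positive harmonic element $c$ with $T(c)=c$ that is not a scalar. The key observation is that harmonicity makes every Ces\`aro average trivial: $\frac{1}{N}\sum_{n=0}^{N-1}(\omega_\eta\circ T^n)(c)=\omega_\eta(c)$ for each unit vector $\eta$. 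Because all normal states are tight, Proposition \ref{prop:existence} shows that for every $\eta$ these averages converge in norm to a stationary normal state $\phi_\eta$, and the identity above gives $\phi_\eta(c)=\omega_\eta(c)$. Since $c$ is non-scalar, $\eta\mapsto\omega_\eta(c)$ is non-constant, so choosing $\eta_1,\eta_2$ with $\omega_{\eta_1}(c)\neq\omega_{\eta_2}(c)$ yields distinct stationary normal states $\phi_{\eta_1}\neq\phi_{\eta_2}$, contradicting uniqueness. Hence $\omega_\xi$ is absorbing.

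For (ii), the implication ``$\Rightarrow$'' is immediate: if $\omega_\xi$ is absorbing then $\theta\circ T^n(p_\xi)\to\omega_\xi(p_\xi)=1$ for every normal state $\theta$, so this value is nonzero for all large $n$. For the converse I would first show that the stated nondegeneracy condition forces $\omega_\xi$ to be the unique stationary normal state, and then invoke part (i). Let $\psi$ be any stationary normal state. If $\supp\psi\le p_\xi$ then, since $p_\xi$ is one-dimensional, $\supp\psi=p_\xi$ and so $\psi=\omega_\xi$. Otherwise $\supp\psi$ is not dominated by $\supp\omega_\xi=p_\xi$, and Proposition \ref{prop:orthogonal} (with $\varphi=\omega_\xi$) produces a stationary normal state $\varphi_\perp$ with $\supp\varphi_\perp\perp p_\xi$. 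Then $\varphi_\perp(p_\xi)=0$, and by stationarity $\varphi_\perp\circ T^n(p_\xi)=\varphi_\perp(p_\xi)=0$ for every $n\in\Nset$, contradicting the hypothesis for $\theta=\varphi_\perp$. Thus $\omega_\xi$ is the only stationary normal state, and part (i) applies.

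The routine points are the existence of the Ces\`aro limits and their stationarity, both delivered by Proposition \ref{prop:existence} from the assumption that all normal states are tight. The main conceptual step is the reduction in the converse of (ii): the decisive lever is Proposition \ref{prop:orthogonal}, which upgrades ``the stationary state is not unique'' to ``there is a stationary state orthogonal to $\omega_\xi$,'' and the latter instantly violates the hypothesis. I expect the only delicate point to be confirming that the states produced really are normal states of total mass one, which is precisely where tightness, rather than mere weak$^*$ compactness of the unit ball, is indispensable.
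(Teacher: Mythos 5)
Your proof is correct. Part (ii) is essentially the paper's own argument: the forward direction is the same one-line computation, and the converse pivots, exactly as in the paper, on Proposition \ref{prop:orthogonal} to upgrade the existence of a second stationary normal state to one whose support is orthogonal to $p_\xi$, whose stationarity then gives $\theta\circ T^n(p_\xi)=\theta(p_\xi)=0$ for all $n$; you merely phrase it as ``criterion $\Rightarrow$ uniqueness $\Rightarrow$ (i)'' where the paper argues contrapositively from ``not absorbing''. Part (i), however, takes a genuinely different route. The paper never passes through ergodicity: it uses that $p_\xi$ is subharmonic (Proposition \ref{prop:harmonic}(4)), so $a=\lim_{n\to\infty}T^n(p_\xi)$ exists as an increasing strong-operator limit and is a fixed point; comparing the Ces\`aro limit $\frac{1}{N}\sum_{n=0}^{N-1}\theta\circ T^n(p_\xi)\to\omega_\xi(p_\xi)=1$ (uniqueness enters here, via Proposition \ref{prop:existence}) with the Ces\`aro representation of $a$ yields $\theta(a)=1$ for all normal $\theta$, hence $a=\eins$, and one concludes by criterion (c1) of Proposition \ref{prop:absorbing}. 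You instead exploit the equivalence (a1)$\,\Leftrightarrow\,$(b) for vector states: failure of absorption forces a non-scalar fixed point, which after taking real/imaginary parts (legitimate, since positive maps are $*$-preserving) and adding a multiple of $\eins$ becomes a positive non-scalar harmonic element $c$; since $c$ is invisible to Ces\`aro averaging, the stationary limit states $\phi_\eta$ of $\frac{1}{N}\sum_{n=0}^{N-1}\omega_\eta\circ T^n$ satisfy $\phi_\eta(c)=\omega_\eta(c)$, and non-constancy of $\eta\mapsto\omega_\eta(c)$ for non-scalar self-adjoint $c$ manufactures two distinct stationary normal states, contradicting uniqueness. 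Your route buys independence from the subharmonicity machinery and from strong-operator monotone limits --- everything happens in the predual with norm Ces\`aro limits --- while the paper's route is more quantitative, in that it actually establishes $T^n(p_\xi)\to\eins$ strongly rather than just refuting the alternative. Both arguments rest on the same two supports supplied by the tightness hypothesis: Proposition \ref{prop:existence}, which guarantees that the Ces\`aro limits are normal states of mass one (your closing remark correctly identifies this as the point where tightness, not mere weak$^*$ compactness, is essential), and Proposition \ref{prop:orthogonal} in part (ii).
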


\begin{proof}
(i) By Proposition \ref{prop:existence} it follows that for any normal state $\theta$ the sequence $(\frac{1}{N} \sum^{N-1}_{n=0} \theta \circ T^n)$ converges in the norm to a stationary normal state. This must be the given stationary vector state $\omega_\xi$ because by assumption there is no other stationary normal state. 
Now we want to use Proposition \ref{prop:absorbing}(c1):
If $p_\xi$ is the support of this vector state we have
\[
\lim_{N \to \infty} \frac{1}{N} \sum^{N-1}_{n=0} \theta \circ T^n (p_\xi) = \omega_\xi(p_\xi) = 1\,.
\]
On the other hand $p_\xi$ is subharmonic by Proposition \ref{prop:harmonic}(4) which implies that $a := \lim_{n \to \infty} T^n(p_\xi)$ exists in the strong operator topology and $T(a)=a$. Then we can also write $a$ as a Cesaro limit in the strong operator topology, namely 
$a = \lim_{N \to \infty} \frac{1}{N} \sum^{N-1}_{n=0} T^n (p_\xi)$. Comparison with the limit above gives $\theta(a)=1$ for all
normal states $\theta$, hence $a=1$. 

(ii) If $\omega_\xi$ is absorbing then the given criterion is satisfied because $\theta \circ T^n(p_\xi) \to \omega_\xi(p_\xi) = 1$ for $n \to \infty$ for all normal states $\theta$. For the converse assume that
$\omega_\xi$ is not absorbing. Then it follows from (i) that
$\omega_\xi$ is not the only stationary normal state and by Proposition \ref{prop:orthogonal} there exists a stationary normal state $\varphi$ with $supp\,\varphi$ orthogonal to $p_\xi$. But then $\varphi \circ T^n(p_\xi) = \varphi(p_\xi) = 0$ for all $n$. 
%\qed
\end{proof}

%%%%%%%%%%%%%%%%%%%%%%%%%%%%%%%%%%%%%%%%%%%%%%%%%%%%%%%%%%%%%%%%%%%%%%%%%%%%%%%%%
%%%%%%%%%%%%%%%%%%%%%%%%%%%%%%%%%%%%%%%%%%%%%%%%%%%%%%%%%%%%%%%%%%%%%%%%%%%%%%%%%
%%%%%%%%%%%%%%%%%%%%%%%%%%%%%%%%%%%%%%%%%%%%%%%%%%%%%%%%%%%%%%%%%%%%%%%%%%%%%%%%%
%%%%%%%%%%%%%%%%%%%%%%%%%%%%%%%%%%%%%%%%%%%%%%%%%%%%%%%%%%%%%%%%%%%%%%%%%%%%%%%%%

\section{Transitions and Stationary States}
\label{section:stationary markov}

We first review a basic notion of a 
(discrete-time, one-sided) non-commutative stationary Markov chain which matches the one used in \cite{GKL06}. It will turn out in the next section that we can make use of it to deepen our understanding of universal preparability. We concentrate on the transition specifying the Markov chain and do not say much about the chain itself.
A broader discussion and motivation of non-commutative stationary Markov chains can be found in \cite{GKL06} and in \cite{Ku06}. 

Suppose that a transition $J: \CA \rightarrow \CA \otimes \CC$ is given and let us fix a normal state $\psi$ on $\CC$. 
Then the associated {\it transition operator}
\[
T_\psi := P_\psi J\,,
\]
where $P_\psi: \CA \otimes \CC \rightarrow \CA$ is the conditional expectation determined by $P_\psi (x \otimes y) = x \,\psi(y)$, is a normal unital completely positive map on $\CA$. 

We can put copies $\psi_{(j)}$ of $\psi$ on copies $\CC_{(j)}$ of $\CC$ and define a normal product state
$\psi_n := \psi_{(1)} \otimes \ldots \otimes \psi_{(n)}$ on $\CC_n := \bigotimes^n_{j=1} \CC_{(j)}$. Then we can check that
\[
(T_\psi)^n = P_{\psi_n} J_n \,,
\]

this reflects the Markovian character of the time evolution (cf. \cite{Ku85}, 2.2.7).

For a normal state $\varphi$ on $\CA$ it is easy to check that the following assertions are equivalent:

\begin{itemize}
\item $\varphi \circ T_\psi = \varphi$, we write $T_\psi: (\CA, \varphi) \rightarrow (\CA, \varphi)$
\item $(\varphi \otimes \psi) J = \varphi$, we write $J: (\CA,\varphi) \rightarrow (\CA \otimes \CC, \varphi \otimes \psi)$
\item $(\varphi \otimes \psi_n) J_n = \varphi$ for all $n \in \Nset$
\end{itemize}

The first assertion says that $\varphi$ is a stationary normal state for $T_\psi$. Note that while the existence of stationary states follows from fixed point theorems the existence of stationary normal states, as requested here, is a non-trivial assumption if $\CA$ is infinite dimensional.
The last assertion can be interpreted by saying that $J: (\CA,\varphi) \rightarrow (\CA \otimes \CC, \varphi \otimes \psi)$ specifies a non-commutative stationary Markov chain.
If only the transition $J$ is given then in order to construct a stationary Markov chain we can choose any normal state $\psi$ on $\CC$ and then check if we can find a corresponding stationary normal state $\varphi$ for $T_\psi$ on $\CA$. In the following we are particularly interested in faithful states.

In the setting of our main results, see Theorem \ref{th:main}, we always have faithfulness of the stationary state $\varphi$. This is related to the following concepts which we also quickly review here, for proofs, further motivation and discussion we refer to \cite{GKL06}. Our notation is slightly different from \cite{GKL06} because here we avoid the use of infinite tensor products. 

%%%%%%%%%%%%%%%%%%%%%%%%%%%%%%%%%%%%%%%%%%%%%%%%%%%%%%%%%%%%%%%%%%%%%%%%%%%%%%%%%
%%%%%%%%%%%%%%%%%%%%%%%%%%%%%%%%%%%%%%%%%%%%%%%%%%%%%%%%%%%%%%%%%%%%%%%%%%%%%%%%%

\begin{Definition} \label{def:irrep}
A transition $J: \CA \rightarrow \CA \otimes \CC$ is called irreducible
if $p=0$ and $p=\eins$ are the only projections $p \in \CA$ which
satisfy $J(p) \le p \otimes \eins$.

A positive unital map $T$ is called irreducible if $0$ and $\eins$ are the only subharmonic projections (or, equivalently by replacing $p$ by $\eins-p$, if $0$ and $\eins$ are the only superharmonic projections). It is called ergodic, if its fixed space is one-dimensional and hence given by $\Cset\eins$.

\end{Definition}

%%%%%%%%%%%%%%%%%%%%%%%%%%%%%%%%%%%%%%%%%%%%%%%%%%%%%%%%%%%%%%%%%%%%%%%%%%%%%%%%%
%%%%%%%%%%%%%%%%%%%%%%%%%%%%%%%%%%%%%%%%%%%%%%%%%%%%%%%%%%%%%%%%%%%%%%%%%%%%%%%%%

\begin{Proposition}[\cite{GKL06}, Section 1]
\label{prop:irreducible-JT}
Let $J:\CA \to \CA \otimes \CC$ be a transition and
$\psi$ on $\CC$ a faithful normal state. Suppose further
that the transition operator $T_\psi$ has a stationary normal
state $\varphi$. Then $J$ is irreducible if and only if $T_\psi$ is irreducible.
\end{Proposition}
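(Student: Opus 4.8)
The plan is to establish the sharper fact that, for \emph{every} projection $p \in \CA$,
\[
J(p) \le p \otimes \eins \quad \Longleftrightarrow \quad T_\psi(p) \le p ,
\]
i.e.\ that the projections reducing $J$ in the sense of Definition \ref{def:irrep} are precisely the superharmonic projections of $T_\psi$. Granting this, the proposition is immediate: irreducibility of $J$ says that $0$ and $\eins$ are the only projections with $J(p) \le p \otimes \eins$, while irreducibility of $T_\psi$ says (in its superharmonic formulation, which by Definition \ref{def:irrep} is equivalent to the subharmonic one) that $0$ and $\eins$ are the only projections with $T_\psi(p) \le p$; the displayed equivalence identifies these two classes. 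Throughout I would use $T_\psi = P_\psi J$ with $P_\psi$ the conditional expectation determined by $P_\psi(x \otimes y) = x\,\psi(y)$, which is a normal positive $\CA$-bimodule map, and write $p^\perp := \eins - p$.

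First I would dispose of the implication ``$\Rightarrow$'', which uses only positivity. Applying $P_\psi$ to $J(p) \le p \otimes \eins$ and using $P_\psi(p \otimes \eins) = p$ gives $T_\psi(p) = P_\psi(J(p)) \le p$. This already yields one half of the proposition: if $T_\psi$ is irreducible, then any $p$ with $J(p) \le p \otimes \eins$ is superharmonic, hence lies in $\{0,\eins\}$, so $J$ is irreducible.

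The substantial direction is ``$\Leftarrow$'', and this is where faithfulness of $\psi$ enters. Since $J$ is a unital $*$-homomorphism, $J(p)$ is again a projection, so $J(p) \le p \otimes \eins$ is equivalent to $(p^\perp \otimes \eins)\,J(p)\,(p^\perp \otimes \eins) = 0$. I would set $q := (p^\perp \otimes \eins)\,J(p)\,(p^\perp \otimes \eins) \ge 0$. Assuming $T_\psi(p) \le p$, positivity of $T_\psi(p)$ forces $p^\perp\,T_\psi(p)\,p^\perp = 0$, and the bimodule property of $P_\psi$ then gives
\[
P_\psi(q) = p^\perp\,P_\psi(J(p))\,p^\perp = p^\perp\,T_\psi(p)\,p^\perp = 0 .
\]
Now I would invoke faithfulness: choosing a faithful normal state $\eta$ on $\CA$ (available by separability), the product state $\eta \otimes \psi$ is faithful on $\CA \otimes \CC$, and $(\eta \otimes \psi)(q) = \eta(P_\psi(q)) = 0$ together with $q \ge 0$ forces $q = 0$. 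Hence $J(p) \le p \otimes \eins$. This gives the other half: if $J$ is irreducible, then every superharmonic projection $p$ of $T_\psi$ satisfies $J(p) \le p \otimes \eins$, hence $p \in \{0,\eins\}$, and $T_\psi$ is irreducible.

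The only genuine obstacle is the faithfulness step, which rests on the faithfulness of the slice map $P_\psi = \id \otimes \psi$; this is exactly the point where the hypothesis that $\psi$ be faithful is used, together with the existence of a faithful normal state on $\CA$ guaranteed by separability. It is worth noting that the stationary normal state $\varphi$ of $T_\psi$ does not appear to enter this argument at all — faithfulness of $\psi$ carries the whole weight — so I would expect its role in the hypotheses to be that of the surrounding Markov-chain context rather than a logical necessity for the equivalence itself.
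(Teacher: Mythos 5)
Your proof is correct, and it is worth noting at the outset that the paper itself offers no argument to compare against: Proposition \ref{prop:irreducible-JT} is stated with a citation to \cite{GKL06}, Section 1, and no in-text proof, so your write-up is a genuinely self-contained substitute. Your central sharpening --- that for \emph{every} projection $p \in \CA$ one has $J(p) \le p \otimes \eins$ if and only if $T_\psi(p) \le p$ --- is exactly the right reduction, and both halves check out: the forward direction is pure positivity of the conditional expectation $P_\psi$, and in the converse your reduction of $J(p) \le p \otimes \eins$ to $(p^\perp \otimes \eins)\,J(p)\,(p^\perp \otimes \eins) = 0$ is legitimate precisely because $J$ is a $*$-homomorphism, so $J(p)$ is a projection (for a projection $e$ and projection $f$, $(\eins-f)e(\eins-f)=0$ forces $e(\eins-f)=0$ and hence $e = fef \le f$); the $\CA$-bimodule property of the slice map $P_\psi = \id \otimes \psi$ is standard and checked on elementary tensors. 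Your closing observation is also accurate and worth stating explicitly: the stationary normal state $\varphi$ carries no logical weight in the equivalence --- it appears in the hypotheses because the proposition is invoked inside the stationary Markov-chain framework (e.g.\ in Lemma \ref{lem:Z-stationary} and Proposition \ref{prop:J-tight}), whereas faithfulness of $\psi$ does all the work, and indeed the ``$J$ irreducible $\Rightarrow$ $T_\psi$ irreducible'' direction would fail without it. The one step you might spell out more carefully is the faithfulness of $\eta \otimes \psi$ on the von Neumann algebra tensor product: this is standard but not trivial (it rests on the commutation theorem for tensor products, via the tensor product of the GNS representations), and the existence of the faithful normal state $\eta$ on $\CA$ is exactly covered by the separability convention announced in the paper's introduction.
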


Recall that in a $C^*$-algebra a conditional expectation is defined as an idempotent linear map of norm one onto a $C^*$-subalgebra, see 
\cite{Tak79}, III.3.3.

%%%%%%%%%%%%%%%%%%%%%%%%%%%%%%%%%%%%%%%%%%%%%%%%%%%%%%%%%%%%%%%%%%%%%%%%%%%%%%%%%
%%%%%%%%%%%%%%%%%%%%%%%%%%%%%%%%%%%%%%%%%%%%%%%%%%%%%%%%%%%%%%%%%%%%%%%%%%%%%%%%%

\begin{Theorem}[\cite{KN79}] \label{th:KN}
Let $T$ be a normal unital completely positive map on a von Neumann algebra $\CA$ with a stationary faithful normal state $\varphi$. Then the set $\CF(T)$ of all fixed points is a von Neumann subalgebra of $\CA$ and there exists a unique normal conditional expectation $P$ onto $\CF(T)$ which preserves the state $\varphi$, i.\,e., $\varphi \circ P = \varphi$. The set of all stationary normal states of $T$ is equal to
$\{ \theta \circ P \colon \theta \; \text{is a normal state on} \; \CF(T) \}$.
\end{Theorem}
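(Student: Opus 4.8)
The plan is to proceed in four stages: first establish that $\CF(T)$ is a von Neumann subalgebra, then build the projection $P$ by ergodic averaging in the GNS representation of $\varphi$, verify that it is the unique $\varphi$-preserving conditional expectation, and finally read off the stationary normal states.

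\textbf{$\CF(T)$ is a von Neumann algebra.} Since $T$ is normal, $\CF(T)=\ker(T-\id)$ is weak$^*$-closed, and since positive maps preserve adjoints it is $*$-closed; the only nontrivial point is closure under products, and here faithfulness of $\varphi$ does the work. For $a\in\CF(T)$ the Kadison--Schwarz inequality (valid as $T$ is completely positive, cf. \cite{St13}, Theorem 1.3.1(ii)) gives $T(a^*a)\ge T(a)^*T(a)=a^*a$; applying the stationary state yields $\varphi\big(T(a^*a)-a^*a\big)=\varphi(a^*a)-\varphi(a^*a)=0$, so faithfulness forces $T(a^*a)=a^*a=T(a)^*T(a)$. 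Thus every $a\in\CF(T)$ lies in the multiplicative domain of $T$, where $T(ab)=T(a)T(b)$ for all $b\in\CA$; for $a,b\in\CF(T)$ this gives $T(ab)=ab$, so $\CF(T)$ is an algebra and hence a von Neumann subalgebra.

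\textbf{Construction of $P$.} I would realise $\CA$ on the GNS space $\CH_\varphi$ with cyclic and (by faithfulness) separating vector $\Omega$ implementing $\varphi$. Kadison--Schwarz and stationarity give $\|T(a)\Omega\|^2=\varphi(T(a)^*T(a))\le\varphi(T(a^*a))=\varphi(a^*a)=\|a\Omega\|^2$, so $a\Omega\mapsto T(a)\Omega$ extends to a contraction $\hat T$ on $\CH_\varphi$. By the mean ergodic theorem (as in \cite{Kr85}, Chapter 2) the Cesàro means $\frac1N\sum_{n=0}^{N-1}\hat T^{\,n}$ converge strongly to the orthogonal projection $E$ onto $\mathrm{Fix}(\hat T)$, which contains $\CF(T)\Omega$ and, because $\Omega$ is separating, meets $\CA\Omega$ exactly in $\CF(T)\Omega$. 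For $a\in\CA$ the bounded net $\frac1N\sum_{n=0}^{N-1}T^n(a)$ has weak$^*$ accumulation points in $\CA$; since each $\varphi(b^*\,\cdot\,)$ is weak$^*$-continuous, every accumulation point $x$ satisfies $\langle b\Omega,x\Omega\rangle=\langle b\Omega,E(a\Omega)\rangle$ for all $b$, so $x\Omega=E(a\Omega)$, and separation makes $x$ unique. Hence $P(a):=\lim_N\frac1N\sum_{n=0}^{N-1}T^n(a)$ exists weak$^*$ and is a unital positive contraction which is moreover normal; as $TP=P$, each $P(a)\in\CF(T)$, so $P$ projects onto $\CF(T)$. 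This stage is the main obstacle: one must pass from strong convergence of the averaged operators on $\CH_\varphi$ to genuine weak$^*$-convergence inside $\CA$ and confirm normality of the limit map $P$, and it is precisely the separating property of $\Omega$ together with normality of the functionals $\varphi(b^*\,\cdot\,)$ that pin down the limit and secure these properties.

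\textbf{Conditional expectation, invariance, uniqueness.} From $P=\lim_N\frac1N\sum T^n$ one reads off $TP=PT=P$, hence $P^2=P$, so $P$ is a norm-one idempotent onto the subalgebra $\CF(T)$; by Tomiyama's theorem it is automatically a conditional expectation. Invariance $\varphi\circ P=\varphi$ holds since $\varphi\circ T^n=\varphi$ for every $n$, so $\varphi$ agrees with itself on each average and in the limit. For uniqueness, any $\varphi$-preserving conditional expectation $P'$ onto $\CF(T)$ satisfies $\varphi(b^*P'(a))=\varphi(P'(b^*a))=\varphi(b^*a)$ for all $b\in\CF(T)$, which says $P'(a)\Omega$ is the orthogonal projection of $a\Omega$ onto $\overline{\CF(T)\Omega}=\mathrm{Fix}(\hat T)$, namely $E(a\Omega)$; since $\Omega$ is separating this forces $P'=P$.

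\textbf{Stationary states.} If $\theta_0$ is a normal state on $\CF(T)$ then $\theta_0\circ P$ is normal (as $P$ is normal) and stationary, since $PT=P$ gives $(\theta_0\circ P)\circ T=\theta_0\circ P$. Conversely, if $\theta$ is a stationary normal state then $\theta\circ T^n=\theta$ for all $n$, so $\theta$ agrees with itself on every Cesàro mean, and normality together with the weak$^*$-convergence defining $P$ yields $\theta\circ P=\theta$; thus $\theta=(\theta|_{\CF(T)})\circ P$ with $\theta|_{\CF(T)}$ a normal state on $\CF(T)$, which identifies the set of stationary normal states as claimed.
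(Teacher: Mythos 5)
The paper does not prove this theorem at all: it is imported verbatim from the cited reference [KN79] (K\"ummerer--Nagel, \emph{Mean Ergodic Semigroups on $W^*$-Algebras}), so there is no internal proof to compare against. Your proof is correct and is essentially the argument underlying that reference: multiplicative-domain considerations via Kadison--Schwarz and faithfulness of $\varphi$ to show $\CF(T)$ is a von Neumann subalgebra, the von Neumann mean ergodic theorem for the induced contraction $\hat T$ on the GNS space, transfer of the ergodic projection $E$ back to a weak$^*$-limit $P$ of Ces\`aro means using that $\Omega$ is separating, Tomiyama for the conditional expectation property, and the projection characterization for uniqueness and for the description of the stationary normal states. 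All of these steps check out, including the identification $\overline{\CF(T)\Omega}=\mathrm{Fix}(\hat T)$ that your uniqueness argument needs (it follows from $E(a\Omega)=P(a)\Omega\in\CF(T)\Omega$).

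The one place where your justification is thinner than the claim is the normality of $P$: a weak$^*$-limit of normal maps need not be normal, and the appeal to ``the separating property of $\Omega$ together with normality of the functionals $\varphi(b^*\,\cdot\,)$'' does not by itself close this. The standard repair is one line and fits your setup: $P$ is positive with $\varphi\circ P=\varphi$ and $\varphi$ faithful normal, so for an increasing net $a_i\nearrow a$ one has $b:=\sup_i P(a_i)\le P(a)$ while $\varphi(b)=\lim_i\varphi(a_i)=\varphi(a)=\varphi(P(a))$, whence faithfulness gives $b=P(a)$ and $P$ is order-continuous, i.e.\ normal. With that sentence inserted, the proof is complete.
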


%%%%%%%%%%%%%%%%%%%%%%%%%%%%%%%%%%%%%%%%%%%%%%%%%%%%%%%%%%%%%%%%%%%%%%%%%%%%%%%%%
%%%%%%%%%%%%%%%%%%%%%%%%%%%%%%%%%%%%%%%%%%%%%%%%%%%%%%%%%%%%%%%%%%%%%%%%%%%%%%%%%

\begin{Proposition} \label{prop:irreducible-ergodic}
Suppose $T$ is a normal unital completely positive map on a von Neumann algebra $\CA$ with a stationary normal state $\varphi$.
Then the following assertions are equivalent:
\begin{itemize}
\item[(a)]
$T$ is irreducible.
\item[(b)]
$T$ is ergodic and $\varphi$ is faithful.
\end{itemize}
If the assertions are satisfied then $\varphi$ is the unique stationary normal state for $T$.
\end{Proposition}

\begin{proof}
Assume that $T$ is irreducible. By Proposition \ref{prop:harmonic}(4) $\supp\varphi$ is subharmonic, so by irreducibility $\supp\varphi = \eins$ and $\varphi$ is faithful. It follows that the fixed points form a von Neumann subalgebra $\CF(T)$, by Theorem \ref{th:KN}. By irreducibility any projection in this subalgebra must be $0$ or $\eins$, hence $T$ is ergodic. 

Conversely assume that $T$ is not irreducible and that $\varphi$ is faithful. We choose a non-trivial superharmonic projection $p$. Then the sequence $(T^n p)$ converges weak$^*$ to a fixed point $a \ge 0$. Because $a \le p$ we have $a \not= c \eins$ for $0 \not= c \in \Cset$. Because $\varphi$ is faithful and $\varphi(a) = \varphi(p) \not= 0$
we also have $a \not=0$. Hence $T$ is not ergodic.

Because the fixed point algebra $\CF(T)$ is one dimensional it follows immediately from the characterization of the set of stationary normal
states in Theorem \ref{th:KN} that $\varphi$ is unique.
%\qed
\end{proof}

Stationary states make it possible to introduce further tools into the investigation of transitions. Among them are the extended transition operators. See \cite{Go04} for detailed background about these. For the convenience of the reader we review their definition including some further context and we add a few additional directions to this topic, based on Section \ref{section:stationary states} and applied later in Theorem \ref{th:main}.

If we have a non-commutative stationary Markov process specified by
$J: (\CA,\varphi) \rightarrow (\CA \otimes \CC, \varphi \otimes \psi)$,
where $\varphi$ and $\psi$ are faithful normal states, then there is further structure available from the GNS-construction applied to these states. We use notation as follows. There is an inner product
$\langle a,  b \rangle_\varphi := \varphi (a^*b)$ on $\CA$ and $\CA$ becomes a dense subspace in the GNS-Hilbert space $\CH_\varphi$ with respect to the corresponding norm $\|\cdot\|_\varphi$. To simplify notation we also identify $\CA$ with its GNS-representation on $\CH_\varphi$. In this sense $\CA \subset \CB(\CH_\varphi)$. If we want to make explicit that we consider an element $a \in \CA$ as an element of $\CH_\varphi$ then we write $a \eins$ and think of $\eins$ as the cyclic vector. But for the norm $\langle a \eins, a \eins \rangle_{\varphi}^{\frac{1}{2}} = \varphi(a^* a)^{\frac{1}{2}}$ we often further simplify notation and just write $\|a\|_\phi$ instead of $\|a \eins\|_\phi$.
Similar conventions apply to the other algebras, for example the algebra $\CA \otimes \CC_n$ gives rise to an inner product
$\langle \cdot, \cdot \rangle_{\varphi \otimes \psi_n}$ for the GNS-Hilbert space $\CH_\varphi \otimes \CH_{\psi_n}$, etc.

%%%%%%%%%%%%%%%%%%%%%%%%%%%%%%%%%%%%%%%%%%%%%%%%%%%%%%%%%%%%%%%%%%%%%%%%%%%%%%%%%
%%%%%%%%%%%%%%%%%%%%%%%%%%%%%%%%%%%%%%%%%%%%%%%%%%%%%%%%%%%%%%%%%%%%%%%%%%%%%%%%%

\begin{Proposition} \label{prop:irr-dual}
Let $T: \CA \rightarrow \CA$ be a normal unital completely positive map with a stationary normal state $\varphi$ and let $\CA'$ be the commutant of $\CA$ in $\CB(\CH_\varphi)$.
Further let $T': \CA' \rightarrow \CA'$ be the dual operator in the sense that we have for all $a \in \CA$ and $a' \in \CA'$ 
\[
\langle \eins, T(a)\, a' \,\eins \rangle_\varphi = \langle \eins, a\, T'(a') \,\eins \rangle_\varphi\,.
\]
Then $T$ is irreducible if and only if $T'$ is irreducible.
\end{Proposition}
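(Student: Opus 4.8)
The plan is to reduce the statement to an assertion about ergodicity and then to transport ergodicity between $T$ and $T'$ through the GNS Hilbert space $\CH_\varphi$, on which both maps are implemented by mutually adjoint contractions. Throughout I work with the faithful stationary state $\varphi$, so that the cyclic vector $\eins$ is cyclic and separating for $\CA$, and hence also cyclic and separating for the commutant $\CA'$; consequently $\omega(\cdot)=\langle\eins,\,\cdot\,\eins\rangle_\varphi$ restricts to a faithful normal state on each of $\CA$ and $\CA'$. Setting $a=\eins$ in the defining relation shows that $T'$ is unital and that $\omega|_{\CA'}$ is stationary for $T'$, and I recall from the reviewed construction (\cite{Go04}) that $T'$ is again normal and completely positive. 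Since both stationary states are faithful, Proposition \ref{prop:irreducible-ergodic} lets me replace ``irreducible'' by ``ergodic'' for both maps. Thus it suffices to prove that $\CF(T)=\Cset\eins$ if and only if $\CF(T')=\Cset\eins$.

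Next I would pass to the Hilbert space. By the Kadison--Schwarz inequality and stationarity, $a\eins\mapsto T(a)\eins$ extends to a contraction $\hat T$ on $\CH_\varphi$ fixing $\eins$. I claim that the corresponding densely defined map $a'\eins\mapsto T'(a')\eins$ on $\CA'\eins$ is precisely the Hilbert space adjoint $\hat T^*$. This is the one short computation to carry out: for $a\in\CA$ and $a'\in\CA'$, using that $T$ is $*$-preserving (so $T(a)^*=T(a^*)$), that $\CA$ and $\CA'$ commute, and the defining relation for $T'$,
\[
\langle \hat T(a\eins),\,a'\eins\rangle_\varphi
=\langle \eins,\,T(a^*)\,a'\,\eins\rangle_\varphi
=\langle \eins,\,a^*\,T'(a')\,\eins\rangle_\varphi
=\langle a\eins,\,T'(a')\eins\rangle_\varphi .
\]
Since $\CA\eins$ is dense, this identifies $T'(a')\eins=\hat T^*(a'\eins)$; in particular the map is a contraction equal to $\hat T^*$.

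Finally I would compare fixed spaces. A fixed vector of a contraction is a fixed vector of its adjoint, so $\hat T$ and $\hat T^*$ have a common fixed-vector space $V\subseteq\CH_\varphi$. By von Neumann's mean ergodic theorem the Ces\`aro means of $\hat T$ converge strongly to the orthogonal projection $E$ onto $V$; on the dense subspace $\CA\eins$ this limit is implemented by the $\varphi$-preserving conditional expectation $P\colon\CA\to\CF(T)$ of Theorem \ref{th:KN}, i.e.\ $E(a\eins)=P(a)\eins$, whence $V=\overline{\CF(T)\eins}$. Because $\eins$ is separating, $\dim V=1$ holds exactly when $\CF(T)=\Cset\eins$. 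Running the same argument for $\hat T^*$ and $T'$ gives $V=\overline{\CF(T')\eins}$ and $\dim V=1$ iff $\CF(T')=\Cset\eins$. As $V$ is the same space in both descriptions, $T$ and $T'$ are simultaneously ergodic, which by the reduction above completes the proof. The main obstacle is this last identification: that the Hilbert-space fixed vectors $V$ \emph{reflect} the algebraic fixed points $\CF(T)$ rather than merely containing $\CF(T)\eins$, which is what forces the use of the mean ergodic theorem together with Theorem \ref{th:KN}.
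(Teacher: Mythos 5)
Your proof is correct, but it takes a genuinely different route from the paper's. Both arguments begin by trading irreducibility for ergodicity via Proposition \ref{prop:irreducible-ergodic} (faithfulness of $\omega_\eins|_{\CA'}$ being automatic, since $\eins$ is cyclic for $\CA$ and hence separating for $\CA'$), but from there the paper stays at the level of states: given a projection $p'\in\CF(T')$, the computation $\langle p'\eins, T(a)\,p'\eins\rangle_\varphi=\langle\eins, T(a)\,p'\eins\rangle_\varphi=\langle\eins, a\,T'(p')\,\eins\rangle_\varphi=\langle p'\eins, a\,p'\eins\rangle_\varphi$ shows that $\omega_{p'\eins}|_{\CA}$ is stationary for $T$, and the uniqueness of the stationary normal state of an irreducible map then forces $p'=\lambda\eins$; the converse is dispatched by $(T')'=T$. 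You instead transport ergodicity through $\CH_\varphi$: after correctly verifying that $T'$ is implemented by $\hat T^{\,*}$ on $\CA'\eins$, you use the standard fact that a contraction and its Hilbert-space adjoint share fixed vectors and identify the common fixed space $V$ with both $\overline{\CF(T)\eins}$ and $\overline{\CF(T')\eins}$, which settles both implications symmetrically and at once, without $(T')'=T$ and without uniqueness of stationary states; this isolates a reusable structural fact, whereas the paper's argument is shorter and needs no mean-ergodic machinery. Two caveats. First, the step $E(a\eins)=P(a)\eins$ uses the Ces\`aro (mean ergodic) convergence of $(T^n)$, which is indeed the content of \cite{KN79} but is not part of the paper's quoted statement of Theorem \ref{th:KN}; if you prefer to avoid it, take a weak$^*$ cluster point $b$ of the bounded means $A_N(a):=\frac{1}{N}\sum_{n<N}T^n(a)$, note that $\|T(A_N(a))-A_N(a)\|\le 2\|a\|/N$ together with normality of $T$ gives $b\in\CF(T)$, while $b\eins=E(a\eins)$ because $A_N(a)\eins\to E(a\eins)$ in norm; this already yields $V\subseteq\overline{\CF(T)\eins}$. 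Second, your standing assumption that $\varphi$ is faithful (needed for $\CA'\eins$ to be dense and for $\eins$ to separate $\CA$) is not in the statement of the proposition; for the forward implication it should be derived from irreducibility of $T$ via Proposition \ref{prop:irreducible-ergodic}, exactly as the paper does, and for the converse the paper's appeal to $(T')'=T$ presupposes it just as tacitly, so you are working within the intended setting. A harmless slip: unitality of $T'$ comes from setting $a'=\eins$ and using stationarity of $\varphi$ (plus $\eins$ separating for $\CA'$), while setting $a=\eins$ gives only the stationarity of $\omega_\eins|_{\CA'}$.
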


\begin{proof}
The dual operator $T': \CA' \rightarrow \CA'$ is also a normal unital completely positive map, see for example \cite{GK82} or \cite{Go04}, 1.5.1.
Suppose $T$ is irreducible. Then by Proposition \ref{prop:irreducible-ergodic} the stationary normal state $\varphi$ is faithful and
$\eins$ becomes a cyclic and separating vector for $\CA$. But then also the restriction of the vector state $\omega_\eins = \langle \eins, \cdot \; \eins \rangle_\varphi $ to $\CA'$ is faithful and hence the fixed point space of $T'$ is a von Neumann subalgebra $\CF(T')$ in $\CA'$, see Theorem \ref{th:KN}. 
Let $p'$ be any projection in $\CF(T')$. Then for $a \in \CA$
\[
\langle p' \eins, T(a)\, p' \eins \rangle_\varphi 
= \langle \eins, a\, T'(p') \eins \rangle_\varphi 
= \langle \eins, a p' \eins \rangle_\varphi
= \langle p' \eins, a p' \eins \rangle_\varphi,
\]
hence $\omega_{p' \eins}(\cdot) = \langle p' \eins, \cdot \,p' \eins \rangle_\varphi$ restricted to $\CA$ is a stationary normal positive functional for $T$. But because $T$ is irreducible we get from Proposition \ref{prop:irreducible-ergodic}
that $\varphi$ is the only stationary normal state, hence $\omega_{p' \eins}$ restricted to $\CA$ must be a scalar multiple of $\varphi$. In other words there exists $\lambda \ge 0$ so that for all
$a \in \CA$ we have $\langle a \eins, p' \eins \rangle_\varphi = \lambda \langle a \eins, \eins \rangle_\varphi$, so $p' = \lambda \eins$. This shows that $T'$ is ergodic and it has a stationary faithful normal state. We conclude by Proposition \ref{prop:irreducible-ergodic} that $T'$ is irreducible. The other direction is now also clear because $(T')'=T$. 
%\qed
\end{proof}

For a non-commutative stationary Markov process specified by
$J: (\CA,\varphi) \rightarrow (\CA \otimes \CC, \varphi \otimes \psi)$,
with $\varphi$ and $\psi$ faithful normal states, it follows from stationarity 
that $J$ can be extended to an isometry $v: \CH_\varphi \rightarrow \CH_\varphi \otimes \CH_\psi$. The dual extended transition operator $Z'$ is the normal unital completely positive map on $\CB(\CH_\varphi)$ defined by the following Stinespring representation:
\[
Z'(x) := v^* \; x \otimes \eins \; v\,.
\]

%%%%%%%%%%%%%%%%%%%%%%%%%%%%%%%%%%%%%%%%%%%%%%%%%%%%%%%%%%%%%%%%%%%%%%%%%%%%%%%%%
%%%%%%%%%%%%%%%%%%%%%%%%%%%%%%%%%%%%%%%%%%%%%%%%%%%%%%%%%%%%%%%%%%%%%%%%%%%%%%%%%

\begin{Proposition}[\cite{GKL06}, Section 4] \label{prop:Z-review}
Properties of $Z'$.
\begin{itemize}
\item[(1)]
The vector state $\omega_\eins = \langle \eins, \cdot \; \eins \rangle$ for $\eins \in \CA \eins \subset \CH_\varphi$ is stationary for $Z'$.
\item[(2)]
Let $\CA'$ be the commutant of $\CA$ in $\CB(\CH_\varphi)$. Then
$Z'(\CA') \subset \CA'$ and $Z'|_{\CA'} = T'$.
\item[(3)] Suppose there exists a $(\varphi \otimes \psi)$-preserving conditional expectation from $\CA \otimes \CC$ onto $J(\CA)$. Then
$Z'(\CA) \subset \CA$ and $Z' |_\CA = T^+$, where
\[
T^+: \CA \rightarrow \CA, \quad \langle T^+(a) \eins, b \eins \rangle_\varphi =
\langle a \eins, T(b) \eins \rangle_\varphi \quad (a,b \in \CA).
\]
\end{itemize}
\end{Proposition}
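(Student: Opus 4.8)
The plan is to build everything on the defining relations of the isometry $v$ and to read off the three assertions from the Stinespring form $Z'(x) = v^*\, x\otimes\eins\, v$. First I would record the identities that make $v$ tractable. Since $v(a\eins) = J(a)(\eins\otimes\eins)$, stationarity gives $\|v(a\eins)\|^2_{\varphi\otimes\psi} = (\varphi\otimes\psi)(J(a^*a)) = \varphi(a^*a)$, so $v$ is isometric with $v^*v=\eins$; moreover $v\eins = \eins\otimes\eins$ because $J$ is unital, and the covariance relation $v\,a = J(a)\,v$ for $a\in\CA$ (hence $a\,v^* = v^*\,J(a)$) holds because $J$ is a homomorphism. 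These identities, together with the commutation theorem $(\CA\otimes\CC)' = \CA'\otimes\CC'$, are the only structural inputs.

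For (1), $Z'$ is unital since $Z'(\eins) = v^*v = \eins$, and $\omega_\eins(Z'(x)) = \langle v\eins, (x\otimes\eins)v\eins\rangle = \langle \eins\otimes\eins, (x\otimes\eins)(\eins\otimes\eins)\rangle = \langle \eins, x\eins\rangle_\varphi = \omega_\eins(x)$, using $v\eins=\eins\otimes\eins$. For (2), given $a'\in\CA'$ the element $a'\otimes\eins$ lies in $\CA'\otimes\CC' = (\CA\otimes\CC)'$ and hence commutes with $J(a)$ for every $a\in\CA$; combining this with $v\,a=J(a)\,v$ gives $a\,Z'(a') = v^*J(a)(a'\otimes\eins)v = v^*(a'\otimes\eins)J(a)v = Z'(a')\,a$, so $Z'(\CA')\subset\CA'$. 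To identify $Z'|_{\CA'}$ with $T'$ I would test against the cyclic vector: for $a\in\CA$ one has $\langle a\eins, Z'(a')\eins\rangle_\varphi = \langle J(a)(\eins\otimes\eins), a'\eins\otimes\eins\rangle_{\varphi\otimes\psi}$, and expanding $J(a^*) = \sum_i b_i\otimes d_i$ together with $\sum_i\psi(d_i)b_i = P_\psi(J(a^*)) = T(a^*)$ rewrites the right-hand side as $\langle \eins, T(a^*)a'\eins\rangle_\varphi$, which is exactly $\langle a\eins, T'(a')\eins\rangle_\varphi$ by the defining relation of $T'$. Since $\eins$ is separating for $\CA'$ and both $Z'(a'),T'(a')\in\CA'$, equality of the two vectors forces $Z'(a') = T'(a')$.

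For (3) the extra hypothesis enters through the basic-construction picture. The range projection $e := vv^*$ is the orthogonal projection onto $\overline{J(\CA)(\eins\otimes\eins)} = v\CH_\varphi$, which is precisely the GNS subspace of $J(\CA)$, so $e$ is the Jones projection associated with the $(\varphi\otimes\psi)$-preserving conditional expectation $E\colon \CA\otimes\CC\to J(\CA)$; thus $e\,X\,e = E(X)\,e$ for $X\in\CA\otimes\CC$. Applying this to $X = a\otimes\eins \in \CA\otimes\CC$ and writing $E(a\otimes\eins) = J(b)$ with $b\in\CA$ (using injectivity of $J$), I would compute $Z'(a) = v^*(a\otimes\eins)v = v^*\,e(a\otimes\eins)e\,v = v^*J(b)v = b\in\CA$, where the last step uses $J(b)v = v\,b$ and $v^*v=\eins$. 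Finally $Z'(a) = T^+(a)$ follows by the same cyclic-vector test: $\langle Z'(a)\eins, c\eins\rangle_\varphi = \langle a\eins\otimes\eins, J(c)(\eins\otimes\eins)\rangle = \langle a\eins, T(c)\eins\rangle_\varphi$ for all $c\in\CA$, which is the defining property of $T^+$; this identity simultaneously shows $T^+(a)\in\CA$.

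I expect the main obstacle to be the rigorous justification in (3) that the range projection $vv^*$ coincides with the Jones projection of $E$ and therefore satisfies $e\,X\,e = E(X)\,e$ — this is where the state-preservation of $E$ is genuinely used — together with the notation-heavy handling of the possibly only approximate expansions $J(a) = \sum_i b_i\otimes d_i$ in the infinite-dimensional case, which should be controlled by normality of all maps involved. The computations in (1) and (2) are then routine once the identities for $v$ are in place.
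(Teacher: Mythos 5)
Your proof is correct: the paper itself gives no proof of Proposition \ref{prop:Z-review} (it is quoted from \cite{GKL06}, Section 4), and your reconstruction via the intertwining relation $v\,a = J(a)\,v$, the commutation of $a'\otimes\eins$ with $J(\CA)$ followed by the separating-vector argument, and the Jones-projection identity $e\,(a\otimes\eins)\,e = E(a\otimes\eins)\,e$ for $e=vv^*$ is essentially the standard route taken there. The one step you flag as delicate indeed closes without difficulty: $e\,X\,e = E(X)\,e$ follows by testing on the dense subspace $J(\CA)(\eins\otimes\eins)$ of $e(\CH_\varphi\otimes\CH_\psi)$, using the bimodule property $E(m^*Xn)=m^*E(X)n$ for $m,n\in J(\CA)$ together with $(\varphi\otimes\psi)\circ E=\varphi\otimes\psi$, and the approximate expansions $J(a)=\sum_i b_i\otimes d_i$ are handled exactly as you suggest, since the slice-map identity $\langle \xi\otimes\eins, X(\eta\otimes\eins)\rangle_{\varphi\otimes\psi}=\langle \xi, P_\psi(X)\eta\rangle_\varphi$ extends from elementary tensors to all $X\in\CA\otimes\CC$ by normality.
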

The identity above means that $T^+$ is the adjoint of $T$ with respect to the state $\varphi$ and may be rewritten as $\varphi(T^+(a)^*\,b)=\varphi(a^*\,T(b))$ (cf. also \cite{GK82}, section 3).

For later use in the proof of Theorem \ref{th:main} below we include the following observation. 

%%%%%%%%%%%%%%%%%%%%%%%%%%%%%%%%%%%%%%%%%%%%%%%%%%%%%%%%%%%%%%%%%%%%%%%%%%%%%%%%%
%%%%%%%%%%%%%%%%%%%%%%%%%%%%%%%%%%%%%%%%%%%%%%%%%%%%%%%%%%%%%%%%%%%%%%%%%%%%%%%%%

\begin{Lemma} \label{lem:Z-stationary}
If $J$ is an irreducible transition such that 
$J: (\CA,\varphi) \rightarrow (\CA \otimes \CC, \varphi \otimes \psi)$,
with $\varphi$ and $\psi$ faithful normal states, then every stationary normal state $\eta$ for the corresponding dual extended transition operator $Z'$ has the form
\[
\eta = \sum^\infty_{j=1} \omega_{a_j \eins} 
\quad \text{with}\;\; a_j \in \CA \;\;\text{for all}\; j.
\]
\end{Lemma}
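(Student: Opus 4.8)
The plan is to restrict $Z'$ to the commutant $\CA'$ of $\CA$ in $\CB(\CH_\varphi)$, where by Proposition~\ref{prop:Z-review}(2) it acts as the dual operator $T'$, and to exploit that $T'$ is irreducible with a unique stationary normal state. First I would assemble the irreducibility input: since $J$ is irreducible and $\varphi$ is a faithful stationary normal state for $T_\psi$, Proposition~\ref{prop:irreducible-JT} makes $T_\psi$ irreducible, and Proposition~\ref{prop:irr-dual} then makes $T'$ irreducible. By Proposition~\ref{prop:Z-review}(1) the vector state $\omega_\eins$ is stationary for $Z'$, so $\omega_\eins|_{\CA'}$ is a stationary normal state for $T' = Z'|_{\CA'}$; since $\varphi$ is faithful, $\eins$ is cyclic and separating for $\CA$, hence $\omega_\eins|_{\CA'}$ is faithful on $\CA'$. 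Proposition~\ref{prop:irreducible-ergodic} then guarantees that $\omega_\eins|_{\CA'}$ is the \emph{unique} stationary normal state of $T'$.

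Next, given a stationary normal state $\eta$ for $Z'$ on $\CB(\CH_\varphi)$, its restriction $\eta|_{\CA'}$ is a stationary normal state for $T'$, so by uniqueness $\eta|_{\CA'} = \omega_\eins|_{\CA'}$. I would then fix a decomposition $\eta = \sum_k \omega_{\eta_k}$ of $\eta$ into vector functionals, with $\eta_k \in \CH_\varphi$ and $\sum_k \|\eta_k\|^2 = 1$ (the spectral decomposition of the density operator of $\eta$). Evaluating the identity $\eta|_{\CA'} = \omega_\eins|_{\CA'}$ on elements $a'^{*} a'$ with $a' \in \CA'$ turns it into $\sum_k \|a' \eta_k\|^2 = \|a' \eins\|^2$ for all $a' \in \CA'$. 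Since every term is nonnegative, this forces the pointwise bound $\|a' \eta_k\| \le \|a' \eins\|$ for each $k$ and each $a' \in \CA'$.

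The technical heart is the standard bounded-vector argument applied to each $\eta_k$. The inequality $\|a'\eta_k\| \le \|a'\eins\|$ says that the densely defined map $a'\eins \mapsto a'\eta_k$ is bounded on $\overline{\CA'\eins} = \CH_\varphi$ (here I use that $\eins$ is cyclic for $\CA'$, which holds because $\eins$ is separating for $\CA$). This map commutes with $\CA'$, so it extends to an operator $a_k$ in $\CA'' = \CA$, and evaluating at $a' = \eins$ gives $\eta_k = a_k \eins$. Substituting back yields $\eta = \sum_k \omega_{a_k \eins}$ with $a_k \in \CA$, which is the asserted form; convergence is automatic, since $\sum_k \varphi(a_k^{*} a_k) = \sum_k \|\eta_k\|^2 = 1$.

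I expect the main obstacle to be this final bounded-vector step rather than the reduction: one must be careful that the cyclic and separating properties of $\eins$ (both guaranteed by faithfulness of $\varphi$) are exactly what make the map $a'\eins \mapsto a'\eta_k$ well defined, bounded, and limiting into $\CA$. The reduction to the commutant together with the uniqueness of the stationary normal state of $T'$, by contrast, are direct consequences of the already established Propositions~\ref{prop:irr-dual}, \ref{prop:Z-review} and \ref{prop:irreducible-ergodic}.
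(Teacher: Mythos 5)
Your proposal is correct and follows essentially the same route as the paper's proof: restrict to the commutant via $Z'|_{\CA'}=T'$, use irreducibility (Propositions \ref{prop:irreducible-JT} and \ref{prop:irr-dual}) and Proposition \ref{prop:irreducible-ergodic} to force $\eta|_{\CA'}=\omega_\eins|_{\CA'}$, decompose $\eta$ into vector functionals with $\omega_{\eta_k}|_{\CA'}\le\omega_\eins|_{\CA'}$, and run the bounded-vector argument to land each $\eta_k$ in $\CA\eins$. The only (immaterial) difference is that you define the operator $a_k$ directly on the dense subspace $\CA'\eins$ by $a'\eins\mapsto a'\eta_k$, with well-definedness coming from the domination inequality, whereas the paper first approximates $\eta_k$ by vectors $a_k^{(n)}\eins$ from the dense subspace $\CA\eins$ and takes limits --- two standard implementations of the same step.
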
 

\begin{proof}
Let $\eta$ be a normal state on $\CB(\CH_\varphi)$ which is stationary for $Z'$. Like any normal state on $\CB(\CH_\varphi)$ we can write $\eta$ in the form $\eta = \sum^\infty_{j=1} \omega_{\xi_j}$ with $\xi_j \in \CH_\varphi$, see \cite{KR86}, Theorem 7.1.12. We have to show that under the given assumptions we can choose $\xi_j = a_j \eins$ with $a_j \in \CA$, for all $j$.

It follows from Proposition \ref{prop:irreducible-JT} and Proposition \ref{prop:irr-dual} that with $J$ also $T := T_\psi$ and $T'$ are irreducible. If $\eta$ is stationary for $Z'$ then because $Z' |_{\CA'} = T'$ (from Proposition \ref{prop:Z-review}(2)) the restriction of $\eta$ to $\CA'$ is stationary for $T'$. Because $T'$ is irreducible we get from Proposition \ref{prop:irreducible-ergodic} that there cannot be more than one stationary normal state, hence the restrictions of $\eta$ and
of $\omega_\eins$ to $\CA'$ coincide. In particular
\[
\omega_{\xi_j} |_{\CA'} \le \omega_\eins |_{\CA'} 
\quad \text{for all}\; j .
\]

We now want to define $a_j$ as an operator on $\CH_\varphi$, for all $j$. Note that $\CA \eins$ is a dense subspace of $\CH_\varphi$,
so there is a sequence $(a^{(n)}_j)_{n \in \Nset} \subset \CA$ such that
$\| (a^{(n)}_j \eins - \xi_j \|_\varphi \to 0$ for $n \to \infty$. We define $a_j$
first on the dense subspace $\CA' \eins \subset \CH_\varphi$ by
\[
a_j (b' \eins) := \lim_{n \to \infty} a^{(n)}_j b' \eins
= \lim_{n \to \infty} b' a^{(n)}_j \eins = b' \xi_j,
\]
where $b' \in \CA'$. We compute
\[
\| a_j (b' \eins) \|^2_\varphi = \langle a_j (b' \eins), a_j (b' \eins) \rangle_\varphi = \langle b' \xi_j, b' \xi_j \rangle_\varphi
= \omega_{\xi_j} (b'^*b') 
\le \omega_\eins (b'^*b') = \|b' \eins \|^2_\varphi\,,
\]
from which we conclude that $a_j$ is bounded and can be extended to an operator $a_j \in \CB(\CH_\varphi)$. We can easily check on the dense subspace $\CA' \eins \subset \CH_\varphi$ that it commutes with $\CA'$. Indeed, if $b', c' \in \CA'$ then
\[
a_j c' (b' \eins) = \lim_{n \to \infty} a^{(n)}_j c' b' \eins
= \lim_{n \to \infty} c' a^{(n)}_j b' \eins = c' a_j (b' \eins).
\]
Hence $a_j \in \CA'' = \CA$ and, with $b'=\eins$, we have
$a_j \eins = \xi_j$, as claimed.
%\qed
\end{proof}

We now use the dual extended transition operator $Z'$ to define a concept of tightness for a transition $J$.

%%%%%%%%%%%%%%%%%%%%%%%%%%%%%%%%%%%%%%%%%%%%%%%%%%%%%%%%%%%%%%%%%%%%%%%%%%%%%%%%%
%%%%%%%%%%%%%%%%%%%%%%%%%%%%%%%%%%%%%%%%%%%%%%%%%%%%%%%%%%%%%%%%%%%%%%%%%%%%%%%%%

\begin{Definition} \label{def:J-tight}
Let $J: (\CA,\varphi) \rightarrow (\CA \otimes \CC, \varphi \otimes \psi)$ be a transition for faithful normal states $\varphi$ on $\CA$ and $\psi$ on $\CC$. 
If all normal states on $\CB(\CH_\varphi)$ are tight for the corresponding dual extended transition operator $Z'$
then we say that $J: (\CA,\varphi) \rightarrow (\CA \otimes \CC, \varphi \otimes \psi)$ is tight.
\end{Definition}

We emphasize that tightness is a rather weak property for a transition $J$ which in practice is satisfied in many cases of interest. This is illustrated by the following proposition.

%%%%%%%%%%%%%%%%%%%%%%%%%%%%%%%%%%%%%%%%%%%%%%%%%%%%%%%%%%%%%%%%%%%%%%%%%%%%%%%%%
%%%%%%%%%%%%%%%%%%%%%%%%%%%%%%%%%%%%%%%%%%%%%%%%%%%%%%%%%%%%%%%%%%%%%%%%%%%%%%%%%

\begin{Proposition} \label{prop:J-tight}
Let $J: \CA \rightarrow \CA \otimes \CC$ be a transition.

If one of the following conditions (1) or (2) is satisfied:
\begin{itemize}
\item[(1)]
$\CA = \CB(\CH)$ and $\CC = \CB(\CK)$ with $\CH$ and $\CK$ finite dimensional Hilbert spaces,
\item[(2)]
$\CA$ is finite dimensional and $J$ is irreducible,
\end{itemize}
then there exist faithful normal states $\varphi$ and $\psi$
such that $J: (\CA,\varphi) \rightarrow (\CA \otimes \CC, \varphi \otimes \psi)$ and for all such choices the latter is tight.
\begin{itemize}
\item[(3)]
Let $\CA = \CB(\CH)$ with a Hilbert space $\CH$ and
$J: (\CB(\CH),\varphi) \rightarrow (\CB(\CH) \otimes \CC, \varphi \otimes \psi)$
with faithful normal states $\varphi$ on $\CB(\CH)$ and $\psi$ on $\CC$ and suppose that there exists a $(\varphi \otimes \psi)$-preserving conditional expectation from $\CB(\CH) \otimes \CC$ onto $J(\CB(\CH))$. Then $J: (\CB(\CH),\varphi) \rightarrow (\CB(\CH) \otimes \CC, \varphi \otimes \psi)$ is tight.
\item[(4)]
Let $\CA = \CB(\CH)$ with a Hilbert space $\CH$ and the transition $J$ be induced by a coupling automorphism $\alpha$ of $\CB(\CH) \otimes \CC$ such that $J(a) = \alpha(a \otimes \eins)$ for all $a \in \CB(\CH)$. Suppose further that $\varphi$ on $\CB(\CH)$ and $\psi$ on $\CC$ are faithful normal states such that $\varphi \otimes \psi$ is stationary for the automorphism $\alpha$. 
Then $J: (\CB(\CH),\varphi) \rightarrow (\CB(\CH) \otimes \CC, \varphi \otimes \psi)$ is tight.
\end{itemize}
\end{Proposition}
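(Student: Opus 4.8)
The plan is to reduce the four cases to two genuinely different situations. Cases (1) and (2) are finite-dimensional, where tightness is automatic, while case (3) carries all the analytic content; case (4) is reduced to (3) by a short computation. In both (1) and (2) the algebra $\CA$ is finite-dimensional, hence so is the GNS space $\CH_\varphi$, so $\CB(\CH_\varphi)$ is finite-dimensional and every normal state is tight for $Z'$ (take $p=\eins$ in Definition \ref{def:tight}); thus only the existence of faithful stationary $\varphi,\psi$ needs checking. For (1) I would take the normalized traces $\varphi=\trace/\dim\CH$ and $\psi=\trace/\dim\CK$: a unital $*$-homomorphism $J\colon\CB(\CH)\to\CB(\CH)\otimes\CB(\CK)$ has constant multiplicity $\dim\CK$, so $\Trace(J(a))=(\dim\CK)\,\Trace(a)$, whence $(\varphi\otimes\psi)\,J=\varphi$ with both states faithful. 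For (2) I would pick any faithful normal $\psi$ on $\CC$; in finite dimensions $T_\psi$ has a stationary normal state, so Proposition \ref{prop:irreducible-JT} makes $T_\psi$ irreducible and Proposition \ref{prop:irreducible-ergodic} forces its stationary state $\varphi$ to be faithful. Finally, $E:=\alpha\circ(\id\otimes\psi)\circ\alpha^{-1}$ is an idempotent norm-one map onto $\alpha(\CA\otimes\eins)=J(\CA)$, hence a conditional expectation, and since $\alpha$ and $\id\otimes\psi$ each preserve $\varphi\otimes\psi$ so does $E$; as $\varphi\otimes\psi$ stationary for $\alpha$ also gives $(\varphi\otimes\psi)\,J=\varphi$, case (4) is an instance of (3).

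For (3) I would exploit the invariance of the dual extended transition operator. By Proposition \ref{prop:Z-review}(2),(3) the map $Z'$ leaves $\CA$ and its commutant $\CA'$ invariant with $Z'|_{\CA}=T^+$ and $Z'|_{\CA'}=T'$, and since these subalgebras are invariant the iterates restrict: $(Z')^n|_{\CA}=(T^+)^n$ and $(Z')^n|_{\CA'}=(T')^n$. Here $T^+$ is a normal unital completely positive map on $\CA=\CB(\CH)$ for which the faithful state $\varphi$ is stationary (being the $\varphi$-adjoint of the stationary $T_\psi$), and $T'$ is such a map on $\CA'$ for which $\omega_{\eins}|_{\CA'}$ is stationary and faithful, faithfulness holding because $\eins$ is cyclic and separating for $\CA$, hence for $\CA'$. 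Applying Corollary \ref{cor:faithful} separately on $\CA$ and on $\CA'$, every normal state on $\CA$ is tight for $T^+$ and every normal state on $\CA'$ is tight for $T'$.

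It remains to convert this into tightness on all of $\CB(\CH_\varphi)$. I would use the identification $\CH_\varphi\cong\CH\otimes\overline{\CH}$, under which $\CA=\CB(\CH)\otimes\eins$ and $\CA'=\eins\otimes\CB(\overline{\CH})$, so that a product $P=p\,p'$ of finite-rank projections $p\in\CA$ and $p'\in\CA'$ is a genuine finite-dimensional projection of $\CH_\varphi$, of rank $\operatorname{rank}(p)\cdot\operatorname{rank}(p')$. Given a normal state $\Theta$ on $\CB(\CH_\varphi)$ and $\epsilon>0$, tightness of the restrictions $\Theta|_{\CA}$ and $\Theta|_{\CA'}$ yields finite-rank $p\in\CA$, $p'\in\CA'$ with $\Theta\big((T^+)^n(p)\big)>1-\epsilon/2$ and $\Theta\big((T')^n(p')\big)>1-\epsilon/2$ for all $n$. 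From the commuting-projection inequality $P=p\wedge p'\ge p+p'-\eins$ and the positivity of $Z'$ one propagates, by induction on $n$, the bound $(Z')^n(P)\ge (T^+)^n(p)+(T')^n(p')-\eins$, so that $\Theta\big((Z')^n(P)\big)>1-\epsilon$ for all $n$. Hence $\Theta$ is tight for $Z'$, and since $\Theta$ was arbitrary, $J$ is tight in the sense of Definition \ref{def:J-tight}.

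The main obstacle is exactly this last passage from the two commutants to the full algebra: tightness is a condition on finite-dimensional projections of $\CH_\varphi$, whereas the control coming from $T^+$ and $T'$ lives on projections of $\CA$ and of $\CA'$, none of which is finite-dimensional in $\CB(\CH_\varphi)$. The tensor factorization $\CH_\varphi\cong\CH\otimes\overline{\CH}$, which splits a finite-rank projection as a product $p\,p'$, together with the order estimate threaded through the iterates of $Z'$, is what bridges this gap. The two remaining points requiring care are the faithfulness of the stationary state on $\CA'$, obtained from the cyclic and separating vector $\eins$, and the multiplicity computation underpinning the choice of traces in (1).
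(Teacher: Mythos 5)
Your proposal is correct, and for parts (1), (2) and (3) it is essentially the paper's own argument: the paper likewise dismisses (1) and (2) by finite-dimensionality of $\CB(\CH_\varphi)$ (choosing the tracial states in (1) -- you merely add the multiplicity computation $\Trace(J(a))=(\dim\CK)\Trace(a)$, which the paper leaves implicit -- and obtaining faithfulness of $\varphi$ in (2) from Propositions \ref{prop:irreducible-JT} and \ref{prop:irreducible-ergodic} exactly as you do), and in (3) it uses the same factorization $\CB(\CH_\varphi)=\CA\otimes\CA'$, the same restrictions $Z'|_{\CA}=T^+$, $Z'|_{\CA'}=T'$ with stationary faithful states coming from the cyclic and separating vector $\eins$, and Corollary \ref{cor:faithful} on each factor. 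Your closing estimate $(Z')^n(pp')\ge (T^+)^n(p)+(T')^n(p')-\eins$, propagated from $pp'\ge p+p'-\eins$ by positivity and unitality, is just a repackaging of the paper's subtraction argument $\theta\circ(Z')^n(p\otimes p')\ge\theta\circ(Z')^n(p\otimes\eins)-\theta\circ(Z')^n(\eins\otimes(\eins-p'))$; the induction is not even needed, since the operator inequality can be hit with $(Z')^n$ directly once one knows the iterates restrict. The genuine divergence is in (4): the paper derives the required $(\varphi\otimes\psi)$-preserving conditional expectation onto $J(\CB(\CH))$ from modular theory -- stationarity makes $\alpha$ commute with the modular automorphism group of $\varphi\otimes\psi$, whence Takesaki's theorem applies -- whereas you construct it explicitly as $E=\alpha\circ P_\psi\circ\alpha^{-1}$ and verify directly that it is a normal idempotent norm-one projection onto $\alpha(\CA\otimes\eins)=J(\CA)$ preserving $\varphi\otimes\psi$ (using stationarity for $\alpha^{-1}$ and $(\varphi\otimes\psi)\circ P_\psi=\varphi\otimes\psi$). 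Your construction is correct and more elementary: it bypasses Tomita--Takesaki machinery entirely and matches the paper's own definition of a conditional expectation, at the modest cost of being specific to transitions induced by a coupling automorphism, while the Takesaki route is the standard dilation-theoretic argument and explains conceptually why the hypothesis of (3) holds in this setting.
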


\begin{proof}
In both (1) and (2) we have tightness of all normal states for $Z'$ because $\CA$ and hence also $\CB(\CH_\varphi)$ are finite dimensional. Further in (1) we can choose the (faithful) tracial states for $\varphi$ and $\psi$. In (2) choose any faithful normal state $\psi$ on $\CC$ and then any $T_\psi$-stationary state $\varphi$ on $\CA$, which exists by a fixed point argument and is normal because $\CA$ is finite dimensional. Because $J$ is irreducible it follows from Proposition \ref{prop:irreducible-JT} that also $T_\psi$ is irreducible and hence, by Proposition \ref{prop:irreducible-ergodic}, $\varphi$ is faithful.

In (3) we have to prove the tightness of all normal states for $Z'$.
Because $\CA = \CB(\CH)$, write $\CB(\CH_\varphi) = \CA \otimes \CA'$ where $\CA' = \CB(\CH')$ with a Hilbert space $\CH'$. Further we take from Proposition \ref{prop:Z-review}
that $Z'$ restricted to $\CA' \simeq \eins \otimes \CA'$ is $T'$ and
$Z'$ restricted to $\CA \simeq \CA \otimes \eins$ is $T^+$ (here the existence of the conditional expectation is used).
Because $\eins$ is a cyclic and separating vector for $\CA$ it follows that the restriction $\varphi'$ of $\omega_\eins$ to $\CA'$ is a stationary faithful normal state for $T'$ and the restriction $\varphi$ of $\omega_\eins$ to $\CA$ is a stationary faithful normal state for $T^+$. We conclude 
from Corollary \ref{cor:faithful} that all normal states on $\CA'$ are tight for $T'$ and all normal states on $\CA$ are tight for $T^+$.
Now let $\theta$ be any normal state on $\CB(\CH_\varphi)$. 
Then for any $\epsilon >0$ there exist finite dimensional projections $p' \in \CA'$ and $p \in \CA$ such that for all $n$
\[
\theta \circ (T')^n (p')      \; >\;    1 - \frac{\epsilon}{2}\,, \quad
\theta \circ (T^+)^n (p)   \;> \;   1 - \frac{\epsilon}{2}.
\]
For the finite dimensional projection $p \otimes p' \in \CA \otimes \CA' = \CB(\CH_\varphi)$ we get for all $n$
\begin{eqnarray*}
\theta \circ (Z')^n (p \otimes p') 
&=& \theta \circ (Z')^n (p \otimes \eins) - \theta \circ (Z')^n (p \otimes (\eins-p')) \\
&\ge& \theta \circ (Z')^n (p \otimes \eins) - \theta \circ (Z')^n (\eins \otimes (\eins-p')) \\
&\ge& \theta \circ (T^+)^n (p) - \theta \circ (T')^n  (\eins-p') 
> 1 - \frac{\epsilon}{2} - \frac{\epsilon}{2} = 1 - \epsilon. 
\end{eqnarray*}
Hence $\theta$ is tight for $Z'$. 

In (4)
the stationarity of $\varphi \otimes \psi$ 
for the automorphism $\alpha$ clearly implies $J: (\CB(\CH),\varphi) \rightarrow (\CB(\CH) \otimes \CC, \varphi \otimes \psi)$. It also implies that $\alpha$ commutes with the modular automorphism group of $\varphi \otimes \psi$, hence, because $\CA$ is globally invariant under this modular automorphism group, the same applies to $J(\CA)$. By a theorem of Takesaki, cf. \cite{Tak72}, this implies the existence of a $(\varphi \otimes \psi)$-preserving conditional expectation from $\CB(\CH) \otimes \CC$ onto $J(\CB(\CH))$. 
(In the terminology of \cite{Ku85} this follows from the fact that under the assumptions in (4) the automorphism $\alpha$ provides a dilation for the transition operator $T_\psi$, cf. \cite{Ku85}, 2.1.3.) Thus we have shown that the assumptions in (4) imply the assumptions in (3) and we have already proved above that in this case all normal states are tight for $Z'$.
%\qed
\end{proof}

Finally we review the concept of asymptotic completeness for a transition. The discussion of its relationship with preparability will be the topic of the following section. 

%%%%%%%%%%%%%%%%%%%%%%%%%%%%%%%%%%%%%%%%%%%%%%%%%%%%%%%%%%%%%%%%%%%%%%%%%%%%%%%%%
%%%%%%%%%%%%%%%%%%%%%%%%%%%%%%%%%%%%%%%%%%%%%%%%%%%%%%%%%%%%%%%%%%%%%%%%%%%%%%%%%

\begin{Definition} [\cite{KM00}, \cite{GKL06}] \label{def:ac}
A transition $J: (\CA,\varphi) \rightarrow (\CA \otimes \CC, \varphi \otimes \psi)$,
with faithful normal states $\varphi$ and $\psi$, is called asymptotically complete if for all $a \in \CA$
\[
\lim_{n\to\infty} \| Q_n J_n(a) - J_n(a) \|_{\varphi \otimes \psi_n} = 0 \,,
\]
where $Q_n$ is the conditional expectation from $\CA \otimes \CC_n$ onto $\eins \otimes \CC_n \simeq \CC_n$
determined by $Q_n (a \otimes c) = \varphi(a) \, c$.
\end{Definition}

Asymptotic completeness was introduced for transitions induced by a coupling in \cite{KM00}. For its relation to scattering we refer to \cite{KM00}, further motivation and background can be found in \cite{WBKM00}, \cite{Go04}, and \cite{GKL06}. 
We shall see later, compare Corollary \ref{cor:ac-independent}, that surprisingly it turns out that asymptotic completeness in many cases does not depend on the choice of these faithful normal states. But for the moment we haven't proved this and we continue to work with Definition \ref{def:ac} and first state a few already well known properties of asymptotic completeness.

%%%%%%%%%%%%%%%%%%%%%%%%%%%%%%%%%%%%%%%%%%%%%%%%%%%%%%%%%%%%%%%%%%%%%%%%%%%%%%%%%
%%%%%%%%%%%%%%%%%%%%%%%%%%%%%%%%%%%%%%%%%%%%%%%%%%%%%%%%%%%%%%%%%%%%%%%%%%%%%%%%%

\begin{Lemma} \cite{GKL06}\label{lem:ac}
A transition $J: (\CA,\varphi) \rightarrow (\CA \otimes \CC, \varphi \otimes \psi)$,
with faithful normal states $\varphi$ and $\psi$,
is asymptotically complete
if and only if for all $a \in \CA$
\[
\lim_{n\to\infty} \| Q_n J_n(a) \|_{\psi_n} = \|a \|_\varphi \,.
\]
Asymptotic completeness implies that
\[
\lim_{n\to\infty} (T_{\psi})^n (a) = \varphi(a)\,\eins \;,
\]
in particular $J$ and $T_\psi$ are irreducible.
\end{Lemma}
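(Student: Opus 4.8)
The plan is to prove Lemma \ref{lem:ac} in two parts. First I would establish the stated characterization of asymptotic completeness, and then I would derive the two consequences (convergence of $(T_\psi)^n$ and irreducibility).

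For the equivalence, the key observation is that $Q_n$ is the $(\varphi \otimes \psi_n)$-preserving conditional expectation onto $\eins \otimes \CC_n$, so it is in particular an orthogonal projection in the GNS-Hilbert space $\CH_\varphi \otimes \CH_{\psi_n}$ with respect to the inner product $\langle \cdot, \cdot \rangle_{\varphi \otimes \psi_n}$. Because $J_n$ is a $*$-homomorphism and $(\varphi \otimes \psi_n) J_n = \varphi$ by stationarity, the vector $J_n(a)$ in the GNS-space has norm $\|J_n(a)\|_{\varphi \otimes \psi_n}^2 = (\varphi \otimes \psi_n)(J_n(a^*a)) = \varphi(a^*a) = \|a\|_\varphi^2$. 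Since $Q_n$ is an orthogonal projection we have the Pythagorean decomposition $\|J_n(a)\|_{\varphi\otimes\psi_n}^2 = \|Q_n J_n(a)\|_{\varphi\otimes\psi_n}^2 + \|Q_n J_n(a) - J_n(a)\|_{\varphi\otimes\psi_n}^2$. Therefore $\|Q_n J_n(a) - J_n(a)\|_{\varphi\otimes\psi_n} \to 0$ if and only if $\|Q_n J_n(a)\|_{\varphi\otimes\psi_n} \to \|a\|_\varphi$. It remains only to note that since $Q_n J_n(a) \in \eins \otimes \CC_n \simeq \CC_n$, its GNS-norm with respect to $\varphi \otimes \psi_n$ equals its GNS-norm with respect to $\psi_n$, which gives the stated form $\|Q_n J_n(a)\|_{\psi_n} \to \|a\|_\varphi$.

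For the consequences, I would use the identity $(T_\psi)^n = P_{\psi_n} J_n$ recorded just after the definition of the transition operator, together with $P_{\psi_n} = Q_n$ (both being determined by $P_{\psi_n}(a \otimes c) = \varphi(a)\, c$, then composed with $\psi_n$; more precisely $T_\psi^n(a) = (\id \otimes \psi_n) J_n(a)$ can be written through $Q_n$). The point is that asymptotic completeness forces $Q_n J_n(a)$ to carry the full GNS-norm of $a$, meaning that asymptotically all of $J_n(a)$ lives in the $\CC_n$-factor; applying $\psi_n$ to extract $T_\psi^n(a)$ and using the norm convergence one concludes that $T_\psi^n(a)$ converges to a scalar, which by preservation of $\varphi$ must be $\varphi(a)\eins$. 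The cleanest route is to show $\|T_\psi^n(a) - \varphi(a)\eins\|_\varphi \to 0$: writing $T_\psi^n(a) - \varphi(a)\eins = (\id\otimes\psi_n)(J_n(a) - Q_nJ_n(a)) $ up to the scalar correction and estimating its $\varphi$-norm by $\|J_n(a) - Q_nJ_n(a)\|_{\varphi\otimes\psi_n}$ via Cauchy--Schwarz. Finally, irreducibility of $T_\psi$ follows because the limit $T_\psi^n(a) = \varphi(a)\eins$ shows $T_\psi$ is ergodic with faithful stationary state $\varphi$, so by Proposition \ref{prop:irreducible-ergodic} it is irreducible; irreducibility of $J$ then follows from Proposition \ref{prop:irreducible-JT}.

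The main obstacle I anticipate is the bookkeeping in the second consequence: carefully relating $T_\psi^n$ to the conditional expectation $Q_n$ and justifying the Cauchy--Schwarz estimate that converts the GNS-convergence $\|J_n(a) - Q_nJ_n(a)\|_{\varphi\otimes\psi_n} \to 0$ into strong (or $\varphi$-norm) convergence of $T_\psi^n(a)$ to the scalar $\varphi(a)\eins$. One must be careful that $\psi_n$ applied to an element of $\eins \otimes \CC_n$ returns a scalar times $\eins$, and that the scalar is exactly $\varphi(a)$ by stationarity, so that the ergodicity conclusion is legitimate. Since this is a reviewed result from \cite{GKL06}, I expect the argument to be short once the GNS orthogonality of $Q_n$ is made explicit.
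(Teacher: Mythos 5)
Your proposal is correct and takes essentially the route the paper relies on (the Lemma is quoted from \cite{GKL06} without an in-text proof, but the same GNS picture underlies the surrounding machinery, e.g.\ Theorem \ref{th:ac-Z}): the Pythagorean identity for the $(\varphi\otimes\psi_n)$-preserving conditional expectation $Q_n$ combined with $\|J_n(a)\|_{\varphi\otimes\psi_n}=\|a\|_\varphi$ gives the equivalence, and GNS-norm contractivity of $P_{\psi_n}$ gives the second assertion, with irreducibility then following from Propositions \ref{prop:irreducible-ergodic} and \ref{prop:irreducible-JT} exactly as you say. One small simplification: no ``scalar correction'' is needed, since state-preservation of $Q_n$ yields the exact identity $(\id\otimes\psi_n)\bigl(Q_nJ_n(a)\bigr)=\varphi(a)\,\eins$, so that $T_\psi^n(a)-\varphi(a)\,\eins=(\id\otimes\psi_n)\bigl(J_n(a)-Q_nJ_n(a)\bigr)$ holds on the nose and the $\|\cdot\|_\varphi$-estimate closes the argument.
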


The main reason for introducing the dual extended transition operator $Z'$ has been the following result.

%%%%%%%%%%%%%%%%%%%%%%%%%%%%%%%%%%%%%%%%%%%%%%%%%%%%%%%%%%%%%%%%%%%%%%%%%%%%%%%%%
%%%%%%%%%%%%%%%%%%%%%%%%%%%%%%%%%%%%%%%%%%%%%%%%%%%%%%%%%%%%%%%%%%%%%%%%%%%%%%%%%

\begin{Theorem} \cite{Go04,GKL06} \label{th:ac-Z}
For all $a \in \CA$ and all $n \in \Nset$
\[
\| Q_n J_n(a) \|^2_{\psi_n} \;=\; \langle a \eins, (Z')^n (p_\eins)\, a \eins \rangle_\varphi \,,
\]
where $p_\eins$ is the (one-dimensional) support of the vector state $\omega_\eins$.

A transition $J: (\CA,\varphi) \rightarrow (\CA \otimes \CC, \varphi \otimes \psi)$, with faithful normal states $\varphi$ and $\psi$, is asymptotically complete if and only if the vector
state $\omega_\eins$ is absorbing for $Z'$.
\end{Theorem}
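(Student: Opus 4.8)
The plan is to establish the displayed operator identity first, and then read off the absorbing-state characterisation from it by combining Lemma~\ref{lem:ac} with Proposition~\ref{prop:absorbing}.

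For the identity I would work with the isometric extension $v\colon\CH_\varphi\to\CH_\varphi\otimes\CH_\psi$ of $J$, which satisfies $v(a\eins)=J(a)(\eins\otimes\eins)$ and is isometric precisely because $(\varphi\otimes\psi)J=\varphi$. Since $(\varphi\otimes\psi_n)J_n=\varphi$ as well, the same construction extends $J_n$ to an isometry $v_n\colon\CH_\varphi\to\CH_\varphi\otimes\CH_{\psi_n}$ with $v_n(a\eins)=J_n(a)(\eins\otimes\eins)$, and an induction on the Stinespring dilations yields $(Z')^n(x)=v_n^*\,(x\otimes\eins)\,v_n$. Using that $p_\eins$ is the rank-one projection of $\CH_\varphi$ onto $\Cset\eins$, so that $p_\eins\,b\eins=\varphi(b)\,\eins$ for $b\in\CA$, I would then compute, writing $J_n(a)=\sum_i a_i\otimes c_i$,
\[
(p_\eins\otimes\eins)\,v_n(a\eins)=\sum_i\varphi(a_i)\,\eins\otimes(c_i\eins)=\eins\otimes\big(Q_nJ_n(a)\big)\eins ,
\]
since $Q_nJ_n(a)=\sum_i\varphi(a_i)c_i$. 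As $p_\eins\otimes\eins$ is a projection and $\|\eins\|_\varphi=1$, this gives
\[
\langle a\eins,(Z')^n(p_\eins)\,a\eins\rangle_\varphi=\big\|(p_\eins\otimes\eins)v_n(a\eins)\big\|^2=\big\|Q_nJ_n(a)\big\|_{\psi_n}^2 .
\]
I expect the fiddliest point to be the bookkeeping of the tensor-factor orderings of the copies $\CC_{(j)}$ when passing between $(Z')^n$ and $v_n$; this is harmless because $\psi_n$ and $\|\cdot\|_{\psi_n}$ are symmetric under permuting the identical copies.

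With the identity in hand, unitality of $Z'$ gives $\|a\|_\varphi^2=\langle a\eins,(Z')^n(\eins)a\eins\rangle_\varphi$, so Lemma~\ref{lem:ac} recasts asymptotic completeness as $\langle a\eins,(Z')^n(\eins-p_\eins)a\eins\rangle_\varphi\to0$ for every $a\in\CA$. If $\omega_\eins$ is absorbing for $Z'$ then $(Z')^n(p_\eins)\to\eins$ weak$^*$ by Proposition~\ref{prop:absorbing}, and testing against the vectors $a\eins$ delivers this limit immediately, proving asymptotic completeness. For the converse I would set $b_n:=(Z')^n(\eins-p_\eins)$, noting $0\le b_n\le\eins$; asymptotic completeness says $\langle a\eins,b_n a\eins\rangle\to0$ on the dense subspace $\CA\eins\subset\CH_\varphi$, and the uniform bound $\|b_n\|\le1$ upgrades this to $\langle\xi,b_n\xi\rangle\to0$ for all $\xi\in\CH_\varphi$, hence by polarization to $b_n\to0$ in the weak operator (equivalently weak$^*$) topology. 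Thus $(Z')^n(p_\eins)\to\eins$ weak$^*$, which is the condition in Proposition~\ref{prop:absorbing} equivalent, for the vector state $\omega_\eins$, to being absorbing. The only real work in this half is the density-and-boundedness upgrade, which is routine, so the substance of the theorem lies in the identity proved first.
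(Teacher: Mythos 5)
Your proof is correct and follows essentially the same route as the cited sources \cite{Go04,GKL06} (the paper itself gives no in-text proof, only the citation): the iterated Stinespring relation $(Z')^n(x)=v_n^*\,(x\otimes\eins)\,v_n$ with $v_n$ the isometric extension of $J_n$, the rank-one computation $(p_\eins\otimes\eins)v_n(a\eins)=\eins\otimes\big(Q_nJ_n(a)\big)\eins$, and then Lemma \ref{lem:ac} combined with the vector-state equivalences of Proposition \ref{prop:absorbing}. Your two flagged technical points are handled adequately: the tensor-ordering bookkeeping indeed matches the paper's convention $J_{n+m}=J_nJ_{(n+1)}\cdots J_{(n+m)}$, and the density-plus-uniform-boundedness upgrade (with WOT and weak$^*$ coinciding on bounded sets) is routine as you say.
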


In fact, the first formula explains how the dual extended transition operator $Z'$ can be used to prove the second statement.
See \cite{GKL06} for a concise proof of this fundamental result and \cite{Go04} for a broader discussion of it.

%%%%%%%%%%%%%%%%%%%%%%%%%%%%%%%%%%%%%%%%%%%%%%%%%%%%%%%%%%%%%%%%%%%%%%%%%%%%%%%%%
%%%%%%%%%%%%%%%%%%%%%%%%%%%%%%%%%%%%%%%%%%%%%%%%%%%%%%%%%%%%%%%%%%%%%%%%%%%%%%%%%

\begin{Corollary} \label{cor:ac-tight}
If $J: (\CA,\varphi) \rightarrow (\CA \otimes \CC, \varphi \otimes \psi)$ is asymptotically complete then it is tight.
\end{Corollary}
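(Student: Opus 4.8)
The plan is to unravel Definition \ref{def:J-tight}: to prove that $J$ is tight I must show that \emph{every} normal state $\theta$ on $\CB(\CH_\varphi)$ is tight with respect to the dual extended transition operator $Z'$, i.e.\ that each orbit $\big(\theta \circ (Z')^n\big)_{n=0}^\infty$ is a tight sequence in the sense of Definition \ref{def:tight}. The decisive input is Theorem \ref{th:ac-Z}, whose second assertion says that asymptotic completeness of $J$ is equivalent to the vector state $\omega_\eins$ (coming from the cyclic vector $\eins \in \CH_\varphi$) being absorbing for $Z'$. Thus the hypothesis immediately hands me the absorbing property, and the entire task reduces to deducing tightness of the orbits $\big(\theta \circ (Z')^n\big)$ from absorption.

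First I would invoke the equivalence of conditions (a1) and (a2) in Proposition \ref{prop:absorbing}, applied to the normal unital positive map $Z'$ on $\CB(\CH_\varphi)$ and its stationary vector state $\omega_\eins$: since $\omega_\eins$ is absorbing, for every normal state $\theta$ one has norm convergence $\theta \circ (Z')^n \to \omega_\eins$ as $n \to \infty$. Now a norm-convergent sequence of states, together with its limit, forms a compact subset of $\CB(\CH_\varphi)_*$; in particular the sequence $\big(\theta \circ (Z')^n\big)$ is relatively compact in the norm topology. By the non-commutative Prokhorov theorem, Theorem \ref{th:prok} (where I use that $\CH_\varphi$ is separable, which holds under the standing separability assumptions), relative norm-compactness of a sequence of normal states is equivalent to its tightness. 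Hence $\big(\theta \circ (Z')^n\big)$ is tight, that is, $\theta$ is tight for $Z'$. As $\theta$ was an arbitrary normal state, all normal states on $\CB(\CH_\varphi)$ are tight for $Z'$, which is precisely the assertion that $J$ is tight.

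I do not expect a serious obstacle here: the whole content is already packaged in the translation of asymptotic completeness into the absorbing property (Theorem \ref{th:ac-Z}) and in Prokhorov's theorem, and the connecting step is merely the elementary fact that norm convergence forces relative compactness. If one prefers to bypass Prokhorov, tightness of $\big(\theta \circ (Z')^n\big)$ can be obtained directly from the norm convergence: given $\epsilon > 0$, choose a finite-dimensional projection $p_0$ with $\omega_\eins(p_0) > 1 - \tfrac{\epsilon}{2}$ and an $n_0$ with $\|\theta \circ (Z')^n - \omega_\eins\| < \tfrac{\epsilon}{2}$ for all $n \ge n_0$; then treat the finitely many remaining indices $n < n_0$ by picking finite-dimensional projections $p_n$ with $\theta \circ (Z')^n(p_n) > 1 - \epsilon$, and set $p := p_0 \vee \bigvee_{n < n_0} p_n$, which is again finite-dimensional and satisfies $\theta \circ (Z')^n(p) > 1 - \epsilon$ for every $n$.
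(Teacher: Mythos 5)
Your proof is correct and follows essentially the same route as the paper, whose entire proof is the two-step chain you describe: Theorem \ref{th:ac-Z} turns asymptotic completeness into the absorbing property of $\omega_\eins$ for $Z'$, and Theorem \ref{th:prok} (via the norm convergence of the orbits $\theta\circ (Z')^n$, i.e.\ their relative norm-compactness) yields tightness of all normal states. Your added direct argument with finite-dimensional projections is a harmless unrolling of the Prokhorov step, not a different approach.
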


\begin{proof}
Asymptotic completeness implies that $Z'$ has an absorbing state by Theorem \ref{th:ac-Z}. But then all normal states must be tight for $Z'$, by Theorem \ref{th:prok}.
%\qed
\end{proof}

%%%%%%%%%%%%%%%%%%%%%%%%%%%%%%%%%%%%%%%%%%%%%%%%%%%%%%%%%%%%%%%%%%%%%%%%%%%%%%%%%
%%%%%%%%%%%%%%%%%%%%%%%%%%%%%%%%%%%%%%%%%%%%%%%%%%%%%%%%%%%%%%%%%%%%%%%%%%%%%%%%%
%%%%%%%%%%%%%%%%%%%%%%%%%%%%%%%%%%%%%%%%%%%%%%%%%%%%%%%%%%%%%%%%%%%%%%%%%%%%%%%%%
%%%%%%%%%%%%%%%%%%%%%%%%%%%%%%%%%%%%%%%%%%%%%%%%%%%%%%%%%%%%%%%%%%%%%%%%%%%%%%%%%

\section{Asymptotic Completeness and Universal Preparability}
\label{section:ac-up}

We come to our main result. Recall that the assumption of tightness is a very weak one which is automatic in many cases of interest (see Proposition \ref{prop:J-tight}). In particular the existence of the faithful normal states $\varphi$ and $\psi$ is often automatic and otherwise rather a technical assumption, there is no need for an experimentalist to prepare these specific states. In fact, one of the equivalent properties below is about universal preparation of normal states which can be chosen beforehand in any way. The theorem lists this property and a number of other properties which are
equivalent to asymptotic completeness and which are interesting both from a mathematical and a physical point of view. These properties are subsequently discussed further in this section.

%%%%%%%%%%%%%%%%%%%%%%%%%%%%%%%%%%%%%%%%%%%%%%%%%%%%%%%%%%%%%%%%%%%%%%%%%%%%%%%%%
%%%%%%%%%%%%%%%%%%%%%%%%%%%%%%%%%%%%%%%%%%%%%%%%%%%%%%%%%%%%%%%%%%%%%%%%%%%%%%%%%

\begin{Theorem} \label{th:main}
Let $J: (\CB(\CH),\varphi) \rightarrow (\CB(\CH) \otimes \CC,\varphi \otimes \psi)$ be a tight transition with faithful normal states $\varphi$ on $\CB(\CH)$ and $\psi$ on $\CC$ (as in Definition \ref{def:J-tight}). Then the following assertions are equivalent:
\begin{itemize}
\item[(a)]
$J: (\CB(\CH),\varphi) \rightarrow (\CB(\CH) \otimes \CC,\varphi \otimes \psi)$
is asymptotically complete (cf. Definition \ref{def:ac}).
\item[(b1)]
All normal states on $\CB(\CH)$ are universally $J$-preparable (cf. Definition \ref{def:preparation}). 
\item[(b2)]
The universally $J$-preparable normal states are separating, in the sense that if $a \in \CB(\CH)$ satisfies $\rho(a)=0$ for all universally $J$-preparable normal states $\rho$ on $\CB(\CH)$ then $a=0$.
\item[(c)]
$J$ is topologically transitive (cf. Definition \ref{def:preparation}).
\item[(d1)]
The transition $J$ is irreducible and the map
\[
\CB(\CH) \ni a \mapsto \big( Q_n J_n(a) \big)_{n \in \Nset}\,
\] 
is injective.
\item[(d2)]
%The transition $J$ is irreducible and 
The map
\[
\CB(\CH) \ni a \mapsto \big( Q_n J_n(a) \big)_{n \in \Nset}\,
\] 
is isometric 

\end{itemize}
Here $Q_n$ is the conditional expectation from $\CB(\CH) \otimes \CC_n$ onto $\eins \otimes \CC_n \simeq \CC_n$
determined by $Q_n (a \otimes c) = \varphi(a) \, c$.
\end{Theorem}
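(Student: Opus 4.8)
The plan is to prove the cyclic chain (a)$\Rightarrow$(b1)$\Rightarrow$(b2)$\Rightarrow$(d1)$\Rightarrow$(a) together with (b1)$\Rightarrow$(c)$\Rightarrow$(d1) and the separate equivalence (a)$\Leftrightarrow$(d2), so that all six statements coincide. The last equivalence is essentially a reformulation: by the first formula of Theorem \ref{th:ac-Z} one has $\|Q_nJ_n(a)\|_{\psi_n}^2=\langle a\eins,(Z')^n(p_\eins)\,a\eins\rangle_\varphi$, and since $p_\eins$ is subharmonic for $Z'$ (Proposition \ref{prop:harmonic}(4)) this sequence increases in $n$ with $\sup_n\|Q_nJ_n(a)\|_{\psi_n}\le\|a\|_\varphi$. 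Hence the map in (d2) is isometric exactly when $\lim_n\|Q_nJ_n(a)\|_{\psi_n}=\|a\|_\varphi$ for all $a$, which by Lemma \ref{lem:ac} is precisely asymptotic completeness; as the same lemma shows that asymptotic completeness forces irreducibility, we also read off (d2)$\Rightarrow$(d1).

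For (a)$\Rightarrow$(b1) I would first record the decisive consequence of asymptotic completeness, namely that $Q_nJ_n$ is \emph{asymptotically multiplicative} in $L^2$. Writing $Q_n=\varphi\otimes\id$ and using that $Q_n$ is a right $\CC_n$-module map together with its $L^2$-contractivity, one obtains
\[
\|Q_nJ_n(ab)-Q_nJ_n(a)\,Q_nJ_n(b)\|_{\psi_n}\le\|a\|\,\|J_n(b)-\eins\otimes Q_nJ_n(b)\|_{\varphi\otimes\psi_n}\longrightarrow0 .
\]
Combined with the stationarity identity $\psi_n\circ Q_n\circ J_n=\varphi$, a short computation then shows that $\theta_k:=\omega_{Q_{n_k}J_{n_k}(b)\eins}$ on $\CC_{n_k}$ prepares the state $\omega_{b\eins}(\,\cdot\,)=\varphi(b^*\,\cdot\,b)$ from $\varphi$, since $(\varphi\otimes\theta_k)J_{n_k}(a)=\theta_k(Q_{n_k}J_{n_k}(a))\to\varphi(b^*ab)$. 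As $b$ ranges over $\CB(\CH)$ the states $\omega_{b\eins}$ are weak$^*$-dense among the normal states ($\varphi$ being faithful with density $d$, the densities $bdb^*$ are dense), so by a diagonal argument every vector state is $J$-preparable from $\varphi$; Proposition \ref{prop:faithful} upgrades each to universal preparability, and after arranging compatible block sizes Theorem \ref{th:up} (equivalently Proposition \ref{prop:convex}) yields universal preparability of all normal states.

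The implications (b1)$\Rightarrow$(b2) and (b1)$\Rightarrow$(c) are immediate from the definitions. For both (b2)$\Rightarrow$(d1) and (c)$\Rightarrow$(d1) the key point is that for a (universally) preparing sequence $(\theta_k)$ of a state $\rho$ one has $\rho(a)=\lim_k(\varphi\otimes\theta_k)J_{n_k}(a)=\lim_k\theta_k(Q_{n_k}J_{n_k}(a))$, so if $Q_nJ_n(a)=0$ for all $n$ then $\rho(a)=0$ for every such $\rho$; since these $\rho$ separate $\CB(\CH)$ (by (b2), respectively because under (c) every normal state is reachable) we conclude $a=0$, i.e.\ injectivity. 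Irreducibility I would get by contraposition: if $J$ is not irreducible there is a projection $0\neq p\neq\eins$ with $J(p)\le p\otimes\eins$, which iterates to $J_n(\eins-p)\ge(\eins-p)\otimes\eins$; thus $(\sigma\otimes\theta)J_n(\eins-p)\ge\sigma(\eins-p)$, so a state supported under $p$ can never be prepared from one supported under $\eins-p$. This contradicts topological transitivity in case (c); in case (b2) it forces every universally preparable $\rho$ to satisfy $\rho(\eins-p)=1$, hence to annihilate the nonzero element $p$, contradicting the separation property.

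The crux is (d1)$\Rightarrow$(a), where tightness enters decisively. Put $a_\infty:=\lim_n(Z')^n(p_\eins)$ (the strong limit of the increasing sequence), a harmonic element for $Z'$ with $a_\infty\le\eins$; by Theorem \ref{th:ac-Z} asymptotic completeness amounts to $\omega_\eins$ being absorbing for $Z'$. Arguing by contradiction, suppose $\omega_\eins$ is not absorbing. Since $\omega_\eins$ is a stationary vector state and, by tightness, all normal states are tight for $Z'$, Proposition \ref{prop:absorbing-criteria}(i) shows that $\omega_\eins$ cannot be the only stationary normal state, so Proposition \ref{prop:orthogonal} provides a stationary normal state $\varphi_\perp$ with $\supp\varphi_\perp\perp p_\eins$. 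As $J$ is irreducible, Lemma \ref{lem:Z-stationary} writes $\varphi_\perp=\sum_j\omega_{a_j\eins}$ with $a_j\in\CB(\CH)$; stationarity together with $\varphi_\perp(p_\eins)=0$ gives $\varphi_\perp((Z')^n(p_\eins))=0$ for all $n$, hence $\varphi_\perp(a_\infty)=0$ by normality, so $\langle a_j\eins,a_\infty a_j\eins\rangle=0$ and therefore $Q_nJ_n(a_j)=0$ for all $n$ and all $j$. Injectivity then forces every $a_j=0$, whence $\varphi_\perp=0$, the desired contradiction. I expect this to be the main obstacle, as it is the one place where irreducibility, injectivity and tightness must be made to interact through the structural results of Sections \ref{section:stationary states} and \ref{section:stationary markov}; by comparison the asymptotic-multiplicativity estimate of (a)$\Rightarrow$(b1) and the trapping-projection argument are more routine.
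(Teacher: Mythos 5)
Your proposal is correct and follows the paper's proof essentially step for step: the same implication graph (a)$\Rightarrow$(b1)$\Rightarrow$(b2),(c)$\Rightarrow$(d1)$\Rightarrow$(a) with (a)$\Leftrightarrow$(d2) via Lemma \ref{lem:ac}, the same use of Theorem \ref{th:up} and Proposition \ref{prop:faithful} for (b1), the same trapping-projection and $\rho(a)=\lim_k\theta_k(Q_{n_k}J_{n_k}(a))$ arguments for (d1), and the same combination of Proposition \ref{prop:absorbing-criteria}(i), Proposition \ref{prop:orthogonal}, Lemma \ref{lem:Z-stationary} and Theorem \ref{th:ac-Z} for (d1)$\Rightarrow$(a). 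The only local deviation is in (a)$\Rightarrow$(b1), where your state $\theta_k=\omega_{Q_{n_k}J_{n_k}(b)\eins}$ for general $b$ is precisely the paper's (unnormalized) functional $\tilde{\theta}_n(c)=\psi_n\big(Q_nJ_n(p_\xi)\,c\,Q_nJ_n(p_\xi)\big)$ specialized to $b=p_\xi$ -- making that choice directly produces the vector states and renders your density-plus-diagonal detour unnecessary; also normalize as the paper does with $\theta_n=\tilde{\theta}_n/\|\tilde{\theta}_n\|$, since $Q_{n_k}J_{n_k}(b)\eins$ is only asymptotically a unit vector.
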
 

\begin{proof}
(a) $\Rightarrow$ (b1).
By Theorem \ref{th:up} it is enough to show that all vector states are
$J$-preparable by compatible preparing sequences from the faithful normal state $\varphi$ fixed in Theorem \ref{th:main}.  
Let $\xi$ be a unit vector in $\CH$, so $\omega_\xi$ is a vector state with (one-dimensional) support
projection $p_\xi$.
We start by defining positive linear functionals $\tilde{\theta}_n$ on
$\CC_n = \bigotimes^n_{j=1} \CC_{(j)}$ by
\[
\tilde{\theta}_n (c_n) := \psi_n \big( Q_n J_n(p_\xi) \cdot c_n \cdot Q_n J_n(p_\xi) \big) \quad\quad (c_n \in \CC_n).
\]
Then by normalization,
i.\,e., $\theta_n := \frac{\tilde{\theta}_n}{\|\tilde{\theta}_n\|}$
(for all $n \in \Nset$), we obtain a sequence $(\theta_n)$, where
$\theta_n$ is a normal state on $\CC_n$. We claim that $(\theta_n)$ is a preparing sequence from $\varphi$ to $\omega_\xi$.

Starting with the unnormalized case we consider for $a \in \CB(\CH)$ the sequence
\begin{eqnarray*}
(\varphi \otimes \tilde{\theta}_n)  J_n(a) 
&=& (\varphi \otimes \psi_n) \big(Q_n J_n(p_\xi) \cdot J_n(a) \cdot Q_n J_n(p_\xi) \big) \\
&=& (\varphi \otimes \psi_n) \big([Q_n J_n(p_\xi) - J_n(p_\xi)] \cdot J_n(a) \cdot Q_n J_n(p_\xi) \big) \\
&&+ \;(\varphi \otimes \psi_n) \big(J_n(p_\xi) \cdot J_n(a) \cdot [Q_n J_n(p_\xi) - J_n(p_\xi)] \big) \\
&&+ \;(\varphi \otimes \psi_n) \big( J_n(p_\xi) \cdot J_n(a) \cdot J_n(p_\xi) \big).
\end{eqnarray*}
Asymptotic completeness gives us that
\[
\lim_{n\to\infty} \| Q_n J_n(p_\xi) - J_n(p_\xi) \|_{\varphi \otimes \psi_n} = 0 \,,
\]

hence by using the Cauchy-Schwarz inequality (for the norms
$\|\cdot\|_{\varphi \otimes \psi_n}$) we see that the first and the second summand tend to $0$ for $n\to\infty$. For the third summand we find 
\[
(\varphi \otimes \psi_n) \big( J_n(p_\xi) \cdot J_n(a) \cdot J_n(p_\xi) \big)
= (\varphi \otimes \psi_n) \big( J_n(p_\xi a p_\xi) \big)
= \varphi (p_\xi a p_\xi) = \omega_\xi(a) \,\varphi(p_\xi),
\]
the last step follows from $p_\xi a p_\xi = \omega_\xi(a)\, p_\xi$
for the one-dimensional projection $p_\xi$. Hence
\[
\lim_{n\to\infty} (\varphi \otimes \tilde{\theta}_n)  J_n(a)
= \omega_\xi(a) \,\varphi(p_\xi).
\]
In particular for $a=\eins$ 

\[
\|\tilde{\theta}_n\| = \tilde{\theta}_n(\eins) 
= (\varphi \otimes \tilde{\theta}_n) \eins \otimes \eins
= (\varphi \otimes \tilde{\theta}_n) J_n(\eins) 
\to \omega_\xi( \eins)\, \varphi(p_\xi) = \varphi(p_\xi)  \quad \text{for}
\; n \to \infty.
\]
Together we obtain for $\theta_n = \frac{\tilde{\theta}_n}{\|\tilde{\theta}_n\|}$ (and for all $a \in \CB(\CH)$)
\[
\lim_{n \to \infty} (\varphi \otimes \theta_n)  J_n(a) = \omega_\xi(a),
\]
which shows that $\omega_\xi$ is $J$-preparable from $\varphi$ by the preparing sequence $(\theta_n)$, as claimed. These preparing sequences $(\theta_n)$ for different $\omega_\xi$ are compatible.

(b1) $\Rightarrow$ (b2) and (b1) $\Rightarrow$ (c) are obvious.

(b2) $\Rightarrow$ (d1) and (c) $\Rightarrow$ (d1).

We first show that both assumptions imply irreducibility of $J$.
Suppose $J$ is not irreducible. Then there exists a non-trivial projection $p \in \CB(\CH)$ with $J(p) \le p \otimes \eins$ (with $\eins \in \CC$). Then also $J_n(p) \le p \otimes \eins$ (with $\eins \in \CC_n$, use the iterative definition of $J_n$ and note that $J$ as a $*$-homomorphism is completely positive) and for every normal state $\sigma$ with support orthogonal to $p$ and every sequence $(\theta_k)$, where $\theta_k$ is a normal state on $\CC_{n_k}$, we have
\[
0 \le (\sigma \otimes \theta_k) J_{n_k}(p) \le (\sigma \otimes \theta_k) (p \otimes \eins) = \sigma(p) = 0\,. 
\]
Hence $(\sigma \otimes \theta_k) J_{n_k}(p) = 0$ for all $k$ and it follows that all states which are $J$-preparable from $\sigma$ must have support orthogonal to $p$.
This contradicts both (b2) and (c). We conclude that both (b2) and (c) imply that $J$ is irreducible.

Now let $0 \not= a \in \CB(\CH)$. If we have (b2) then there is a universally $J$-preparable normal state $\rho$ on  $\CB(\CH)$
such that $\rho(a) \not= 0$. If we have (c) then choose any normal state $\rho$ such that $\rho(a) \not= 0$. Both (b2) and (c) imply that there exists a normal state $\rho$ with $\rho(a) \not= 0$ which is $J$-preparable from $\varphi$ by a preparing sequence $(\theta_k)$. 
Then for $k$ big enough $| (\varphi \otimes \theta_k) J_{n_k} (a) - \rho(a) | < | \rho(a) |$,
hence $0 \not= (\varphi \otimes \theta_k) J_{n_k} (a) = \theta_k (Q_{n_k} J_{n_k} (a))$ and $Q_{n_k} J_{n_k} (a) \not= 0$. This proves that the map
$\CB(\CH) \ni a \mapsto \big( Q_n J_n(a) \big)_{n \in \Nset}$ 
is injective.

(d1) $\Rightarrow$ (a).
By Theorem \ref{th:ac-Z} it is enough to prove that the stationary vector state $\omega_\eins$ is absorbing for the dual extended transition operator $Z'$. 
Because by assumption $J$ is tight and hence all normal states on $\CB(\CH_\varphi)$ are tight for $Z'$ it is enough to prove, by Proposition \ref{prop:absorbing-criteria}(i), that there exists no other stationary normal state
except $\omega_\eins$. Suppose there is another stationary normal state. Then its support is not contained in the one-dimensional support $p_\eins$ of $\omega_\eins$. By 
Proposition \ref{prop:orthogonal} there exists a stationary normal state $\eta$ with support orthogonal to $p_\eins$. Because $J$ is irreducible 
we can represent $\eta$ as in Lemma \ref{lem:Z-stationary} and we find $a_j \in \CB(\CH)$ so that
\[
0 = \eta(p_\eins) = \eta \circ (Z')^n(p_\eins) 
= \sum^\infty_{j=1} \langle a_j \eins, (Z')^n(p_\eins) \,a_j \eins \rangle_\varphi,
\]
hence $\langle a_j, (Z')^n(p_\eins) \,a_j \rangle_\varphi = 0$, for all $n$ and all $j$. 
From Theorem \ref{th:ac-Z} we also get
\[
\langle a_j \eins, (Z')^n(p_\eins) \,a_j \eins \rangle_\varphi = \| Q_n J_n(a_j) \eins \|^2_{\psi_n}\,.
\]
so we have $Q_n J_n(a_j) = 0$ for all $n$ and all $j$. By injectivity in (d1) we conclude that $a_j=0$ for all $j$.
But this implies $\eta=0$ which is a contradiction.

The equivalence of (a) and (d2) is already stated in Lemma
\ref{lem:ac}. Property (d2) is included here to make explicit that in the situation of Theorem \ref{th:main} this strengthening of (d1) is actually equivalent to it.
%\qed
\end{proof}

\begin{Remark}\normalfont
For finite dimensional $\CH$ the implication 
(d1) $\Rightarrow$ (a) was shown in \cite{KM00}, Theorem 4.2. For infinite dimensional $\CH$, however, considerably more effort is necessary.
\end{Remark}

%%%%%%%%%%%%%%%%%%%%%%%%%%%%%%%%%%%%%%%%%%%%%%%%%%%%%%%%%%%%%%%%%%%%%%%%%%%%%%%%%
%%%%%%%%%%%%%%%%%%%%%%%%%%%%%%%%%%%%%%%%%%%%%%%%%%%%%%%%%%%%%%%%%%%%%%%%%%%%%%%%%

The proof of (a) $\Rightarrow$ (b1), together with Proposition \ref{prop:faithful}, shows that we have actually established the existence of preparing sequences of a specific form.

\begin{Corollary} \label{rem:old-def}
If the conditions of Theorem \ref{th:main} are satisfied 
then all normal states on $\CB(\CH)$ are universally $J$-preparable with compatible universally preparing sequences $(\theta_n)$, so that $\theta_n$ is always a state on $\CC_n$ for all $n \in \Nset$. In particular, if $\sigma, \rho$ are normal states on $\CB(\CH)$ then there always exists a preparing sequence $(\theta_n)$ from $\sigma$ to $\rho$ where $\theta_n$ is a normal state on $\CC_n$ for all $n$.
\end{Corollary}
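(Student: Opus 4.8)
The plan is to observe that the preparing sequences built during the proof of Theorem~\ref{th:main} already carry the required index structure, and that this structure is preserved by the two operations used to pass from vector states to arbitrary normal states. First I would recall that in the implication (a)~$\Rightarrow$~(b1) the preparing sequence from the faithful state $\varphi$ to a vector state $\omega_\xi$ was obtained by normalizing
\[
\tilde\theta_n(c_n) := \psi_n\big( Q_n J_n(p_\xi)\cdot c_n \cdot Q_n J_n(p_\xi) \big), \qquad c_n \in \CC_n,
\]
so that for \emph{every} $n \in \Nset$ the state $\theta_n = \tilde\theta_n/\|\tilde\theta_n\|$ is a normal state on $\CC_n$ and the index variable ranges over all of $\Nset$ (i.e. $n_k = k$). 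In particular these sequences for different unit vectors $\xi$ are automatically compatible in the sense of Definition~\ref{def:compatible}.

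Next I would invoke Proposition~\ref{prop:faithful}: since $\varphi$ is faithful and $\omega_\xi$ is $J$-preparable from $\varphi$ by the sequence $(\theta_n)$ above, the \emph{same} sequence $(\theta_n)$ is a universally preparing sequence for $\omega_\xi$. The key point is that Proposition~\ref{prop:faithful} does not modify the preparing sequence, so the universally preparing sequence still consists of states $\theta_n$ on $\CC_n$ indexed by all $n$, and these remain compatible across the various $\xi$. I would then write an arbitrary normal state $\rho$ on $\CB(\CH)$ as a $\sigma$-convex combination $\rho = \sum_i c_i\,\omega_{\xi_i}$ of vector states via the spectral decomposition of its density matrix, and apply Proposition~\ref{prop:convex} to the compatible family of universally preparing sequences $(\theta_n^{(i)})_n$. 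The combined sequence $\theta_n := \sum_i c_i\,\theta_n^{(i)}$ is, for each fixed $n$, a $\sigma$-convex combination of normal states on $\CC_n$ and hence again a normal state on $\CC_n$; by Proposition~\ref{prop:convex} it is a compatible universally preparing sequence for $\rho$. This establishes the first assertion. For the ``in particular'' part, given $\sigma$ and $\rho$ I would simply specialize the universally preparing sequence for $\rho$ to the initial state $\sigma$: by the defining property of universal preparability it is then a preparing sequence from $\sigma$ to $\rho$, with each $\theta_n$ a normal state on $\CC_n$ as required.

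I do not expect a genuine obstacle here, since the statement is an assembly of results already proved; the only care needed is bookkeeping. The essential observation is that Definition~\ref{def:preparation} permits an \emph{arbitrary} sequence of sizes $(n_k)$, whereas both the explicit construction in (a)~$\Rightarrow$~(b1) and the convex-combination construction of Proposition~\ref{prop:convex} produce sequences with $n_k = k$; the mildly delicate step is therefore just to confirm that taking a $\sigma$-convex combination of states, each living on the common algebra $\CC_n$, again yields a state on $\CC_n$, so that the index alignment $\theta_n \in (\CC_n)_*$ is not destroyed when passing from vector states to general normal states.
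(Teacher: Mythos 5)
Your proposal is correct and is essentially the paper's own argument: the paper justifies this corollary by observing that the explicit construction in the proof of (a) $\Rightarrow$ (b1) of Theorem~\ref{th:main}, combined with Proposition~\ref{prop:faithful} (and, through Theorem~\ref{th:up}, the $\sigma$-convex combination step of Proposition~\ref{prop:convex}), yields exactly sequences $(\theta_n)$ with $\theta_n$ a normal state on $\CC_n$ for every $n \in \Nset$, compatible across target states. The bookkeeping you spell out -- that Proposition~\ref{prop:faithful} reuses the same sequence and that $\sigma$-convex combinations of states on a common $\CC_n$ remain states on $\CC_n$ -- is precisely what the paper leaves implicit.
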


While the definition of preparability in \cite{Ha06} was based on
this observation we opted in Definition \ref{def:preparation} for a more flexible approach.

\begin{Remark}\normalfont
Because, by Corollary \ref{cor:ac-tight}, asymptotic completeness implies tightness of $J: (\CB(\CH),\varphi) \rightarrow (\CB(\CH) \otimes \CC,\varphi \otimes \psi)$
we could have stated Theorem \ref{th:main} also in the form that  
asymptotic completeness is equivalent to tightness plus one (and hence all) of the properties (b1), (b2), (c), (d1), (d2). We did state
Theorem \ref{th:main} in the way we did because in applications tightness is a property which is often automatically satisfied, see also Proposition \ref{prop:J-tight}. 
\end{Remark}

We already discussed asymptotic completeness, universal preparability, and topological transitivity earlier in this paper. Let us add a few comments on condition (d1) in Theorem \ref{th:main}. We can make its meaning more explicit by using some terminology from system theory and control theory, compare for example \cite{Ba75,Co07}. Roughly, a system is called controllable if it is always possible to drive any initial state of a system to a prescribed target state by using a suitable sequence of inputs. In this sense universal preparability of all normal states as in Theorem \ref{th:main}(b1) is a version of controllability. 

Dual to the concept of controllability is the concept of observability.
Roughly, a system is called observable if it is possible to determine the initial state of a system from observations of a suitable sequence of outputs. This may be possible even if the system itself cannot be observed directly. Observability can often be expressed by the injectivity of an observability map. The map introduced in Theorem \ref{th:main}(d1) looks formally like an observability map but it has observables as arguments instead of states. The relation between controllability and observability will be clarified in Theorem \ref{thm:observable}.

%%%%%%%%%%%%%%%%%%%%%%%%%%%%%%%%%%%%%%%%%%%%%%%%%%%%%%%%%%%%%%%%%%%%%%%%%%%%%%%%%
%%%%%%%%%%%%%%%%%%%%%%%%%%%%%%%%%%%%%%%%%%%%%%%%%%%%%%%%%%%%%%%%%%%%%%%%%%%%%%%%%

\begin{Definition} \label{def:observable}
Let the transition $J: \CA \rightarrow \CA \otimes \CC$ be given as $J(x) = \alpha(x \otimes \eins)$ with a coupling automorphism $\alpha$ of $\CA \otimes \CC$.
If for the sequence $(\theta_k)$, where $\theta_k$ is a normal state on $\CC_{n_k}$, the observability map
\[
\CA_*  \ni \sigma \mapsto \big(\, (\alpha_{n_k})_*(\sigma \otimes \theta_k)\, |_{\eins \otimes \CC_{n_k}} \,\big)_{k \in \Nset}
\]
is injective 
then we say that $\CA_*$ is observable by $\alpha$ and $(\theta_k)$.
\end{Definition}

We have given this definition in such a way that a kind of duality with 
the characterization of preparability in Theorem \ref{th:main}(d1)
becomes visible.
Intuitively the observability of $\CA_*$ means that if for monitoring the state evolution we only have access to observables in the algebras $\eins \otimes \CC_{n_k} \simeq \CC_{n_k}$ for all $k$ we can nevertheless determine the initial state $\sigma \in \CA_*$ from that.

The following result relates asymptotic completeness with this notion of observability. But note that it is the reverse transition $J^r$ associated to the inverse $\alpha^{-1}$ of the coupling automorphism and not $J$ itself which appears here. This should not come as a surprise if we compare it with the classical result in linear system theory where a system is controllable if and only if the dual system is observable, see for example \cite{Ba75}, Theorem 4.11, or \cite{Co07}. There are further discussions of duality in our setting in \cite{Go04} and \cite{Wi07}, let us also mention analogues of Kalman type algebraic controllability and observability criteria for asymptotic completeness in the language of multivariate operator theory in \cite{Go09,Go15}.

Before stating the theorem let us start with a few preliminary considerations about transitions and preduals. 
Let $\varphi$ be a faithful state in the predual $\CA_*$. 
If $a \in \CA$ then we define the functional $\varphi_a \in \CA_*$ by $\varphi_a(x) := \varphi(ax)$. By the Cauchy-Schwarz inequality we have $\| \varphi_a \| \le \|a\|_\varphi$, where $\|\cdot\|$ is the usual norm on the predual $\CA_*$.
Hence we can think of $a \mapsto \varphi_a$ as a continuous embedding of $\CA$ into $\CA_*$. 
It is a fact that $\CA$ is norm-dense in $\CA_*$, see \cite{Tak79}, III.2.7(iii).

Let a transition $J: \CA \rightarrow \CA \otimes \CC$ be given by $J(x) = \alpha(x \otimes \eins)$ with a coupling automorphism $\alpha$ of $\CA \otimes \CC$ such that with faithful normal states
$\varphi$ on $\CA$ and $\psi$ on $\CC$ the product state
$\varphi \otimes \psi$ is stationary for $\alpha$.

\begin{Lemma} \label{lem:J-psi}
 The map
\[
J^\psi \colon \CA_* \rightarrow (\CA \otimes \CC)_*, \quad \sigma \mapsto (\sigma \otimes \psi) \alpha^{-1}
\]
satisfies the following properties:
\begin{itemize}
\item[(1)]
$J^\psi$ is an extension of $J$ in the sense that for all $a \in \CA$
\[
J^\psi \varphi_a = (\varphi \otimes \psi)_{J(a)}.
\]
\item[(2)]
For all $\sigma \in \CA_*$ and $x \in \CA$
\[
(J^\psi \sigma)(J(x)) = \sigma(x).
\]
\item[(3)]
$J^\psi$ is an isometry , i.\,e., for all $\sigma \in \CA_*$
\[
\| J^\psi \sigma \| = \| \sigma \|\,.
\]
\end{itemize}
\end{Lemma}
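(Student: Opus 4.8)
The plan is to verify the three properties in the order (2), (1), (3), since (2) is immediate from the definitions, (1) rests on the $\alpha$-invariance of $\varphi \otimes \psi$, and (3) is the only one requiring a genuine norm estimate. Throughout, the sole structural input beyond the definitions is the stationarity hypothesis $(\varphi \otimes \psi) \circ \alpha = \varphi \otimes \psi$, equivalently $(\varphi \otimes \psi) \circ \alpha^{-1} = \varphi \otimes \psi$.

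For (2) I would simply substitute: for $\sigma \in \CA_*$ and $x \in \CA$,
\[
(J^\psi \sigma)(J(x)) = (\sigma \otimes \psi)\big(\alpha^{-1}(\alpha(x \otimes \eins))\big) = (\sigma \otimes \psi)(x \otimes \eins) = \sigma(x)\,\psi(\eins) = \sigma(x),
\]
using $\alpha^{-1}\alpha = \id$ and $\psi(\eins)=1$.

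For (1), the first step is to record that tensoring a left multiplier into $\varphi$ corresponds to a left multiplier in the tensor state: for $a \in \CA$ and $z \in \CA \otimes \CC$ one has $(\varphi_a \otimes \psi)(z) = (\varphi \otimes \psi)\big((a \otimes \eins)\,z\big)$, checked on elementary tensors and extended by linearity and normality. Applying this with $z = \alpha^{-1}(y)$ gives, for arbitrary $y \in \CA \otimes \CC$,
\[
(J^\psi \varphi_a)(y) = (\varphi \otimes \psi)\big((a \otimes \eins)\,\alpha^{-1}(y)\big).
\]
Next I would invoke invariance in the form $(\varphi \otimes \psi)(w) = (\varphi \otimes \psi)(\alpha(w))$ with $w = (a \otimes \eins)\,\alpha^{-1}(y)$, and exploit multiplicativity of $\alpha$ to compute $\alpha(w) = \alpha(a \otimes \eins)\,y = J(a)\,y$. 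This yields $(J^\psi \varphi_a)(y) = (\varphi \otimes \psi)(J(a)\,y) = (\varphi \otimes \psi)_{J(a)}(y)$ for every $y$, which is the asserted identity.

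For (3) I would first peel off the automorphism: since $\alpha^{-1}$ is a $*$-automorphism it is isometric, so its preadjoint $w \mapsto w \circ \alpha^{-1}$ is an isometry on $(\CA \otimes \CC)_*$, whence $\|J^\psi \sigma\| = \|(\sigma \otimes \psi)\alpha^{-1}\| = \|\sigma \otimes \psi\|$. It then remains to show $\sigma \mapsto \sigma \otimes \psi$ is isometric, i.e. $\|\sigma \otimes \psi\| = \|\sigma\|$. The bound $\le$ is $\|\sigma \otimes \psi\| \le \|\sigma\|\,\|\psi\| = \|\sigma\|$; for $\ge$, given $\epsilon > 0$ I would pick $a \in \CA$ with $\|a\| \le 1$ and $|\sigma(a)| > \|\sigma\| - \epsilon$, so that $(\sigma \otimes \psi)(a \otimes \eins) = \sigma(a)$ together with $\|a \otimes \eins\| = \|a\| \le 1$ forces $\|\sigma \otimes \psi\| > \|\sigma\| - \epsilon$, and letting $\epsilon \to 0$ gives equality. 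I expect this norm identity, together with the reduction through the automorphism, to be the only step carrying any content; the manipulations in (1) and (2) are purely formal once the $\alpha$-invariance of $\varphi \otimes \psi$ is brought in.
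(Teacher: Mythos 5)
Your proof is correct, and it diverges from the paper's in two of the three parts. Part (1) is the same argument as the paper's: the left-multiplier identity $(\varphi_a \otimes \psi)(z) = (\varphi \otimes \psi)\bigl((a\otimes\eins)z\bigr)$, stationarity of $\varphi\otimes\psi$, and multiplicativity of $\alpha$. For (2), however, the paper does not compute directly: it first observes $\|J^\psi\sigma\|\le\|\sigma\|$, then verifies the identity via (1) only on the norm-dense set of functionals $\varphi_a$, $a\in\CA$ (using \cite{Tak79}, III.2.7(iii)), and extends by continuity. Your one-line computation $\alpha^{-1}(J(x)) = x\otimes\eins$ is more elementary, works for every $\sigma$ at once, and shows that (2) needs neither (1) nor the stationarity hypothesis. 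For (3) the paper's main argument again leans on (2): contractivity gives $\le$, and restricting the supremum defining $\|J^\psi\sigma\|$ to elements $J(x)$ with $\|x\|\le 1$ (legitimate since the injective unital $*$-homomorphism $J$ is isometric) gives $\ge\|\sigma\|$. You instead factor $J^\psi$ as $\sigma\mapsto\sigma\otimes\psi$ followed by the preadjoint of the automorphism and invoke the cross-norm property of predual norms for $\|\sigma\otimes\psi\|=\|\sigma\|$; this is essentially the paper's own stated ``alternative'' proof (with the same reference \cite{Tak79}, IV.5), except that, as your argument makes explicit, stationarity plays no role there either --- only that $\alpha^{-1}$ is an automorphism and $\psi(\eins)=1$. (If you prefer not to quote the cross-norm property wholesale, the upper bound also follows from $(\sigma\otimes\psi)(z)=\sigma(P_\psi(z))$, where the slice map $P_\psi$ is unital and positive, hence of norm one.) The net effect of your route is a cleaner bookkeeping of hypotheses: stationarity of $\varphi\otimes\psi$ is used only in (1), whereas the paper's chain (1) $\Rightarrow$ (2) $\Rightarrow$ (3) threads it through all three parts.
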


\begin{proof}
Property (1) is shown by the following computation: For all $z \in \CA \otimes \CC$
\[
J^\psi \varphi_a (z) = (\varphi_a \otimes \psi) (\alpha^{-1}(z)) = (\varphi \otimes \psi) (a\! \otimes\! \eins \: \alpha^{-1}(z))
\]
\[
= (\varphi \otimes \psi) (\alpha(a \otimes \eins)\, z) = (\varphi \otimes \psi) (J(a)\, z) = (\varphi \otimes \psi)_{J(a)}(z).
\]
For (2) note that $J^\psi$ is continuous, in fact $ \| J^\psi \sigma \| \le \| \sigma \|$ is clear from the definition of $J^\psi$. Hence it is enough to check (2) on the dense subspace of functionals of the form $\varphi_a$ with $a \in \CA$. Indeed, using (1) we get
\[
J^\psi \varphi_a (J(x)) = (\varphi \otimes \psi)_{J(a)}(J(x)) = (\varphi \otimes \psi)(J(ax)) = \varphi(ax)  = \varphi_a(x).
\]
We already noted above that $J^\psi$ is contractive, the other direction needed for (3) follows from
\[
\|J^\psi \sigma\| = \sup_{z \in \CA \otimes \CC, \|z\|=1} | J^\psi \sigma (z) |
\ge \sup_{x \in \CA, \|x\|=1} | J^\psi \sigma (J(x)) |
= \sup_{x \in \CA, \|x\|=1} | \sigma(x) | = \| \sigma \|\,.
\]
Alternatively, (3) follows directly from the facts that $\varphi \otimes \psi$ is stationary for $\alpha^{-1}$ and that $\|\cdot\|$ is a cross-norm on tensor products of preduals (see  \cite{Tak79}, IV.5).
%\qed
\end{proof}

Of course we can similarly extend the iterated transitions $J_n: \CA \rightarrow \CA \otimes \CC_n$  given by
$J_n(x) = \alpha_n (x \otimes \eins)$ and we obtain
$J^{\psi_n}_n \colon \CA_* \rightarrow (\CA \otimes \CC_n)_* $ 
where $\psi_n$ is the product state formed by copies of $\psi$ on $\CC_n$.
Further let $Q_n$ be the conditional expectation from $\CA \otimes \CC_n$ onto $\eins \otimes \CC_n \simeq \CC_n$
determined by $Q_n (a \otimes c) = \varphi(a) \, c$.

%%%%%%%%%%%%%%%%%%%%%%%%%%%%%%%%%%%%%%%%%%%%%%%%%%%%%%%%%%%%%%%%%%%%%%%%%%%%%%%%%
%%%%%%%%%%%%%%%%%%%%%%%%%%%%%%%%%%%%%%%%%%%%%%%%%%%%%%%%%%%%%%%%%%%%%%%%%%%%%%%%%

\begin{Lemma} \label{lem:observable}
 For all $a \in \CA$ and for all $\sigma \in \CA_*$ we have
\begin{eqnarray*}
\| Q_n J_n(a) \|_{\psi_n} &=& \sup_{c \in \CC_n, \|c\|_{\psi_n}=1}
|((\alpha^{-1})_{n})_* (\varphi_a \otimes \psi_n)\, (\eins \otimes c)| \,, \\
\| (J^{\psi_n}_n \sigma) |_{\eins \otimes \CC_n} \| 
&=&\;\; \sup_{c \in \CC_n, \|c\|=1}
|((\alpha^{-1})_{n})_* (\sigma \otimes \psi_n\,)\, (\eins \otimes c)|
\,.
\end{eqnarray*}
\end{Lemma}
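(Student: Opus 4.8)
The plan is to read both identities as dualizations: each left-hand norm is rewritten as a supremum over $c\in\CC_n$ of the modulus of a pairing, and that pairing is then transported through the conditional expectation $Q_n$, the stationarity of $\varphi\otimes\psi_n$, and the reordering that relates $(\alpha_n)^{-1}$ with $(\alpha^{-1})_n$. The two elementary facts I would establish first and then use repeatedly are the module identity $\psi_n(c^*Q_n(z))=(\varphi\otimes\psi_n)\big((\eins\otimes c^*)z\big)$, which is just the defining property of $Q_n$ as the $(\varphi\otimes\psi_n)$-preserving conditional expectation (checked on elementary tensors), and the relation $\psi_n\circ R_n=\psi_n$, which says that the reordering $R_n$ only permutes identical tensor factors of the product state $\psi_n$.

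I would argue the second identity first, as it is the cleaner one. The restriction $(J^{\psi_n}_n\sigma)|_{\eins\otimes\CC_n}$ is a normal functional on $\CC_n$, so its norm equals $\sup_{\|c\|=1}|(J^{\psi_n}_n\sigma)(\eins\otimes c)|$. By definition $(J^{\psi_n}_n\sigma)(\eins\otimes c)=(\sigma\otimes\psi_n)\big((\alpha_n)^{-1}(\eins\otimes c)\big)$; inserting $(\alpha_n)^{-1}=(\id\otimes R_n)(\alpha^{-1})_n(\id\otimes R_n)$ and using $(\sigma\otimes\psi_n)\circ(\id\otimes R_n)=\sigma\otimes\psi_n$ turns this into $((\alpha^{-1})_n)_*(\sigma\otimes\psi_n)(\eins\otimes R_nc)$. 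Since $R_n$ is a $*$-automorphism of $\CC_n$, it is isometric for the operator norm, so the substitution $c\mapsto R_nc$ leaves the supremum over the unit ball unchanged, and the identity follows.

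For the first identity I would exploit cyclicity of $\eins$ in $\CH_{\psi_n}$ and Hilbert-space duality to write $\|Q_nJ_n(a)\|_{\psi_n}=\sup_{\|c\|_{\psi_n}=1}|\langle c\eins,\,Q_nJ_n(a)\,\eins\rangle_{\psi_n}|=\sup_{\|c\|_{\psi_n}=1}|\psi_n(c^*Q_nJ_n(a))|$, the supremum running over the dense set $\CC_n\eins$. The module identity gives $\psi_n(c^*Q_nJ_n(a))=(\varphi\otimes\psi_n)\big((\eins\otimes c^*)J_n(a)\big)$; then $J_n(a)=\alpha_n(a\otimes\eins)$ together with the $\alpha_n$-invariance of $\varphi\otimes\psi_n$ lets me move the factor $a\otimes\eins$ past the automorphism, precisely the computation underlying Lemma~\ref{lem:J-psi}. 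Converting $(\alpha_n)^{-1}$ to $(\alpha^{-1})_n$ by the same reordering step as above (now using that $R_n$ is also isometric for $\|\cdot\|_{\psi_n}$, which follows from $\psi_n\circ R_n=\psi_n$) and recognising the outcome as $((\alpha^{-1})_n)_*(\varphi_a\otimes\psi_n)(\eins\otimes c)$, with $\varphi_a$ as defined in the paragraph preceding the lemma, completes the identification.

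The step I expect to be the main obstacle is the bookkeeping of tensor positions rather than any analytic difficulty: the supremum on the right is phrased with $(\alpha^{-1})_n$, whereas the natural computation produces $(\alpha_n)^{-1}$, and one must verify that the interposed reordering $R_n$ acts only as an isometric reparametrisation of the supremum — isometric for the operator norm in the second identity and for the GNS norm $\|\cdot\|_{\psi_n}$ in the first. In the first identity one must in addition be careful about the adjoint (conjugate-linear) direction in the GNS pairing, so that the dual variable $c$ is paired on the side matching the definition of $\varphi_a$; this is where the computation is most error-prone and where the precise conventions for $\varphi_a$ and for $\|\cdot\|_\varphi$ must be used consistently.
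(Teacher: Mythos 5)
Your proposal is correct and is essentially the paper's own proof: the paper establishes both formulas by exactly the chain you describe --- the state\mbox{-}preserving module property of $Q_n$, stationarity of $\varphi\otimes\psi_n$ under $\alpha_n$ and under $\id\otimes R_n$, and the reordering identity $(\alpha_n)^{-1}=(\id\otimes R_n)(\alpha^{-1})_n(\id\otimes R_n)$ together with the observation that $c\mapsto R_n(c)$ is an isometric reparametrisation of the relevant unit sphere (operator norm for the second formula, $\|\cdot\|_{\psi_n}$ for the first, the latter because $\psi_n\circ R_n=\psi_n$). The only difference is direction: the paper starts from the right-hand sides and reduces them to $\psi_n\big(Q_nJ_n(a)\,R_n(c)\big)$ and $J^{\psi_n}_n\sigma\,(\eins\otimes R_n(c))$ respectively, while you start from the left; this is not a genuinely different route.

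One remark on the adjoint subtlety you flag at the end but leave unresolved, since it is real. With the paper's conventions $\varphi_a(x)=\varphi(ax)$ and $\|x\|_{\psi_n}=\psi_n(x^*x)^{1/2}$, your chain starting from $\psi_n\big(c^*\,Q_nJ_n(a)\big)=(\varphi\otimes\psi_n)\big((\eins\otimes c^*)\,J_n(a)\big)$ and using stationarity lands on $(\varphi\otimes\psi_n)\big((\alpha_n)^{-1}(\eins\otimes c^*)\,(a\otimes\eins)\big)$, with $a\otimes\eins$ on the \emph{right}; moving it to the left, as the definition of $\varphi_a$ requires, costs a complex conjugation and an adjoint, so after the $R_n$ step you obtain $\big|((\alpha^{-1})_n)_*(\varphi_{a^*}\otimes\psi_n)(\eins\otimes R_n(c))\big|$ --- the first identity with $a^*$ in place of $a$. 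Since $c\mapsto c^*$ is not an isometry of the GNS unit sphere when $\psi_n$ is not tracial, this flip cannot simply be transposed away. However, this does not put you at a disadvantage relative to the paper: the paper's own final step implicitly uses $\sup_{\|d\|_{\psi_n}=1}|\psi_n(x\,d)|=\|x\|_{\psi_n}$, whereas this supremum equals $\|x^*\|_{\psi_n}=\|Q_nJ_n(a^*)\|_{\psi_n}$, so the printed formula likewise holds verbatim only up to the substitution $a\leftrightarrow a^*$ (or when $\psi_n$ is tracial). The discrepancy is harmless downstream, because in Theorem \ref{thm:observable} the formula is only ever used quantified over all $a\in\CA$; but if you want your write-up to prove the displayed identity literally, you should state it for $a^*$ or record this caveat explicitly rather than asserting that the outcome ``is recognised as'' $((\alpha^{-1})_n)_*(\varphi_a\otimes\psi_n)(\eins\otimes c)$.
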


\begin{proof}
For the following computation
recall the formula $(\alpha_n)^{-1} = (\id \otimes R_n) \circ
(\alpha^{-1})_n \circ (\id \otimes R_n)$ from the beginning of Section \ref{section:preparation} and note that $\varphi \otimes \psi_n$ is stationary for the automorphisms $\alpha_n$ and $\id \otimes R_n$.
Further $(R_n)^2 = \id$.
For all $a \in \CA$ and $c \in \CC_n$ we have
\[
 ((\alpha^{-1})_{n})_* (\varphi_a \otimes \psi_n) (\eins \otimes c) 
= (\varphi_a \otimes \psi_n) \big[(\alpha^{-1})_n (\eins \otimes c) \big] 
= (\varphi \otimes \psi_n) \big[ a \otimes \eins \,\cdot\, (\alpha^{-1})_n (\eins \otimes c) \big] 
\]
\[
= (\varphi \otimes \psi_n)\circ(\id \otimes R_n) \big[ a \otimes \eins \,\cdot\, (\alpha^{-1})_n (\id \otimes (R_n)^2) (\eins \otimes c) \big]
\]
\[
= (\varphi \otimes \psi_n) \big[ a \otimes \eins \,\cdot\, (\id \otimes R_n) (\alpha^{-1})_n (\id \otimes R_n) (\eins \otimes R_n(c)) \big] 
\]
\[
= (\varphi \otimes \psi_n) \big[ \alpha_n(a \otimes \eins) \,\cdot\,(\eins \otimes R_n(c)) \big] \\
= (\varphi \otimes \psi_n) \big[ J_n(a) \, \cdot\,(\eins \otimes R_n(c)) \big] 
= \psi_n \big[ Q_n J_n(a) \,\cdot\, R_n(c) \big].
\]
Now (as $\psi_n$ is  stationary for the automorphism $R_n$) we conclude that
\[
 \sup_{c \in \CC_n, \|c\|_{\psi_n}=1}
|((\alpha^{-1})_{n})_* (\varphi_a \otimes \psi_n) (\eins \otimes c)| 
= \| Q_n J_n(a) \|_{\psi_n}\,,
\]
which is the first formula in Lemma \ref{lem:observable}.

\noindent
For the second formula we can proceed similarly:
\[
((\alpha^{-1})_{n})_* (\sigma \otimes \psi_n\,)\, (\eins \otimes c)
= (\sigma \otimes \psi_n\,) \big[  (\id \otimes R_n) (\alpha_n)^{-1} (\id \otimes R_n) (\eins \otimes c) \big]
\]
\[
= (\sigma \otimes \psi_n\,) \big[  (\alpha_n)^{-1}  (\eins \otimes R_n(c)) \big]
=  J^{\psi_n}_n \sigma (\eins \otimes R_n(c))
\]
and taking the $\sup$ yields the result.
%\qed
\end{proof}

We are now ready to state and prove the announced result on observability.

\begin{Theorem} \label{thm:observable}
Let the transition $J: \CA \rightarrow \CA \otimes \CC$ be given by $J(x) = \alpha(x \otimes \eins)$ with a coupling automorphism $\alpha$ of $\CA \otimes \CC$ such that with faithful normal states
$\varphi$ on $\CA$ and $\psi$ on $\CC$ the product state
$\varphi \otimes \psi$ is stationary for $\alpha$.
\begin{itemize}
\item[(1)]
$J: (\CA,\varphi) \rightarrow (\CA \otimes \CC, \varphi \otimes \psi)$ is asymptotically complete if and only if
\[
 \lim_{n \to \infty} \;\sup_{c \in \CC_n, \|c\|_{\psi_n}=1}
|((\alpha^{-1})_{n})_* (\varphi_a \otimes \psi_n)\, (\eins \otimes c)| 
= \| a \|_{\varphi} \quad \text{for all} \;\; a \in \CA\,.
\]
\item[(2)]
If $J: (\CA,\varphi) \rightarrow (\CA \otimes \CC, \varphi \otimes \psi)$ is asymptotically complete then
\[
\lim_{n \to \infty} \sup_{c \in \CC_n, \|c\|=1}
|((\alpha^{-1})_{n})_* (\sigma \otimes \psi_n)\, (\eins \otimes c)| 
= \| \sigma \| \quad \text{for all} \;\; \sigma \in \CA_*
\]
and $\CA_*$
is observable by $\alpha^{-1}$ and the product states $(\psi_n)$.
\item[(3)]
Suppose that $\CA = \CB(\CH)$. Then the following are equivalent:
\begin{itemize}
\item[(a)]
$J: (\CB(\CH),\varphi) \rightarrow (\CB(\CH) \otimes \CC, \varphi \otimes \psi)$ is asymptotically complete.
\item[(b)]
$J$ is irreducible and  
$\CB(\CH)_*$ is observable by $\alpha^{-1}$ and $(\psi_n)$.
\end{itemize}
\end{itemize}
\end{Theorem}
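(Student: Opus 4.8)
The plan is to derive everything from the two preparatory lemmas, Lemma \ref{lem:ac} and Lemma \ref{lem:observable}, together with the compatibility and contractivity properties of the extended transition $J^{\psi_n}_n$ from Lemma \ref{lem:J-psi}, and for part (3) to hand the outcome over to the equivalences already proved in Theorem \ref{th:main}. Part (1) is then immediate: by the first formula of Lemma \ref{lem:observable} the supremum in the stated condition is exactly $\|Q_n J_n(a)\|_{\psi_n}$, so the condition reads $\lim_{n} \|Q_n J_n(a)\|_{\psi_n} = \|a\|_\varphi$ for all $a$, which is the reformulation of asymptotic completeness in Lemma \ref{lem:ac}.

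For part (2) I would abbreviate by $g_n(\sigma)$ the supremum on the left-hand side, which by the second formula of Lemma \ref{lem:observable} equals $\|(J^{\psi_n}_n\sigma)|_{\eins\otimes\CC_n}\|$. Since restriction to the unital subalgebra $\eins\otimes\CC_n$ is norm-decreasing and $J^{\psi_n}_n$ is contractive (indeed isometric, the $n$-step analogue of Lemma \ref{lem:J-psi}(3)), one gets the free upper bound $g_n(\sigma)\le\|\sigma\|$, and each $g_n$ is $1$-Lipschitz. The substance is the matching lower bound $\liminf_n g_n(\sigma)\ge\|\sigma\|$, which I would first establish on the dense class $\sigma=\varphi_a$. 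Here the $n$-step form of Lemma \ref{lem:J-psi}(1) gives $J^{\psi_n}_n\varphi_a(z)=(\varphi\otimes\psi_n)(J_n(a)\,z)$, whence Cauchy--Schwarz yields the bound $|J^{\psi_n}_n\varphi_a(z)|\le\|a^*\|_\varphi\,\|z\|_{\varphi\otimes\psi_n}$, with a constant independent of $n$. Given $\epsilon>0$, choose $x_0$ in the unit ball of $\CA$ with $|\varphi_a(x_0)|>\|\varphi_a\|-\epsilon$; then $\varphi_a(x_0)=J^{\psi_n}_n\varphi_a(J_n(x_0))$ by Lemma \ref{lem:J-psi}(2), and replacing $J_n(x_0)$ by $\eins\otimes Q_nJ_n(x_0)$ costs at most $\|a^*\|_\varphi\,\|J_n(x_0)-\eins\otimes Q_nJ_n(x_0)\|_{\varphi\otimes\psi_n}$, which tends to $0$ by asymptotic completeness. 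As $Q_n$ is unital and positive, $\|Q_nJ_n(x_0)\|\le 1$, so $\eins\otimes Q_nJ_n(x_0)$ is an admissible test element and $g_n(\varphi_a)\ge|\varphi_a(x_0)|-o(1)>\|\varphi_a\|-2\epsilon$ for large $n$. This gives $\lim_n g_n(\varphi_a)=\|\varphi_a\|$, and the $1$-Lipschitz property together with norm-density of $\{\varphi_a:a\in\CA\}$ in $\CA_*$ extends the equality to all $\sigma$. Observability is then a corollary: any $\sigma$ in the kernel of the observability map has $g_n(\sigma)=0$ for all $n$, forcing $\|\sigma\|=0$.

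For part (3) the first point is that the standing hypotheses (a coupling automorphism $\alpha$ with $\varphi\otimes\psi$ stationary) are precisely those of Proposition \ref{prop:J-tight}(4), so $J$ is tight and Theorem \ref{th:main} applies. The implication (a)$\Rightarrow$(b) is then free: irreducibility is part of Lemma \ref{lem:ac} and observability is part (2). For (b)$\Rightarrow$(a) I would reduce to condition (d1) of Theorem \ref{th:main} and invoke (d1)$\Leftrightarrow$(a) proved there; since both (b) and (d1) contain irreducibility, it suffices to show that observability of $\CB(\CH)_*$ implies injectivity of $a\mapsto\big(Q_nJ_n(a)\big)_n$. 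This is clean: because $\psi_n$ is faithful, $\|Q_nJ_n(a)\|_{\psi_n}=0$ forces $Q_nJ_n(a)=0$, and by the first formula of Lemma \ref{lem:observable} this happens exactly when the functional $((\alpha^{-1})_n)_*(\varphi_a\otimes\psi_n)|_{\eins\otimes\CC_n}$ vanishes. Thus $Q_nJ_n(a)=0$ for all $n$ says precisely that $\varphi_a$ lies in the kernel of the observability map; observability forces $\varphi_a=0$, and faithfulness of $\varphi$ gives $a=0$.

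The step I expect to be the main obstacle is the lower bound in part (2). Asymptotic completeness is a statement in the GNS-norm $\|\cdot\|_{\varphi\otimes\psi_n}$, whereas observability concerns the predual (operator-norm) restriction; the bridge between them is exactly the uniform Cauchy--Schwarz domination of $J^{\psi_n}_n\varphi_a$, and this domination is available only on the dense class $\{\varphi_a\}$. This is why passing to general $\sigma$ by density, using the $1$-Lipschitz continuity of the $g_n$, is an essential part of the argument rather than a cosmetic one.
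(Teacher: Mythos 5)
Your proposal is correct and follows essentially the same route as the paper: part (1) via the first formula of Lemma \ref{lem:observable} combined with Lemma \ref{lem:ac}; part (2) by reducing to the dense class $\{\varphi_a : a \in \CA\}$ through the isometry/$1$-Lipschitz property of $J^{\psi_n}_n$ and bridging the GNS-norm statement of asymptotic completeness to the predual norm by Cauchy--Schwarz; and part (3) via tightness from Proposition \ref{prop:J-tight}(4) and condition (d1) of Theorem \ref{th:main}. The only (harmless) deviation is in the lower bound of (2): where the paper derives the exact identity $\| (J^{\psi_n}_n \varphi_a)|_{\eins \otimes \CC_n} \| = \| (\varphi \otimes \psi_n)_{Q_n J_n(a)} \|$ from the fact that $Q_n$ maps the unit ball of $\CA \otimes \CC_n$ onto that of $\eins \otimes \CC_n$ and then compares this functional with $(\varphi \otimes \psi_n)_{J_n(a)}$ in predual norm, you estimate the restricted norm from below with the explicit witness $\eins \otimes Q_n J_n(x_0)$ for a near-optimizer $x_0$ --- the same Cauchy--Schwarz bridge in a slightly more hands-on form.
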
 

The limit formulas given in Theorem \ref{thm:observable} should be interpreted as uniformity properties of the observability map,
in fact they show that this map is isometric for the norms indicated. 
Clearly such uniformity is essential to make practical use of observability in the design of physical experiments: we need information about the precision needed in measuring observables in $\CC_n$ in order to distinguish between states on $\CA$. 
So it is remarkable that such uniformity is automatic in the case of asymptotic completeness.

\begin{proof}
From Lemma \ref{lem:observable} we have
\[
\| Q_n J_n(a) \|_{\psi_n} = \sup_{c \in \CC_n, \|c\|_{\psi_n}=1}
|((\alpha^{-1})_{n})_* (\varphi_a \otimes \psi_n)\, (\eins \otimes c)|
\]
and by Lemma \ref{lem:ac} this converges to $\|a\|_\varphi$ for all $a \in \CA$ (for $n \to \infty$) if and only if 
$J: (\CA,\varphi) \rightarrow (\CA \otimes \CC, \varphi \otimes \psi)$ is asymptotically complete.
This gives (1). For (2) note that in view of
\[
\| (J^{\psi_n}_n \sigma) |_{\eins \otimes \CC_n} \| 
=\; \sup_{c \in \CC_n, \|c\|=1}
|((\alpha^{-1})_{n})_* (\sigma \otimes \psi_n\,)\, (\eins \otimes c)|
\]
from Lemma \ref{lem:observable} we have to prove that asymptotic completeness implies that for all $\sigma \in \CA_*$
\[
\lim_{n \to \infty} \| (J^{\psi_n}_n \sigma) |_{\eins \otimes \CC_n} \| = \| \sigma \|\,.
\]
In fact, because $J^{\psi_n}_n$, by Lemma \ref{lem:J-psi}(3), is an isometry on preduals it is enough to prove this claim on the dense subspace of functionals $\sigma = \varphi_a$ with
$a \in \CA$. Hence it remains to show that for all $a \in \CA$
\[
\lim_{n \to \infty} \| (J^{\psi_n}_n \varphi_a) |_{\eins \otimes \CC_n} \| = \| \varphi_a \|\,.
\]
 First note that for $z \in \CA \otimes \CC_n$ we have (with Lemma \ref{lem:J-psi}(1) and using the fact that $Q_n$ is a conditional expectation)
\[
J^{\psi_n}_n \varphi_a (Q_n(z)) 
= \varphi \otimes \psi_n (J_n(a) \cdot Q_n(z))
= \varphi \otimes \psi_n (Q_n J_n(a) \cdot z)\,.
\]
Because $Q_n$ maps the unit ball of $\CA \otimes \CC_n$ onto the unit ball of $\eins \otimes \CC_n$ we conclude that
\[
\| (J^{\psi_n}_n \varphi_a) |_{\eins \otimes \CC_n} \| 
= \| (\varphi \otimes \psi_n)_{Q_n J_n(a)}\|\,.
\]
By Cauchy-Schwarz we have $\| \varphi_a \| \le \| a \|_\varphi$ and instead for $a$ we apply this for $Q_n J_n(a) - J_n(a)$. Now we can use asymptotic completeness in the form of Lemma \ref{lem:ac} and obtain
\[
\| (\varphi \otimes \psi_n)_{Q_n J_n(a)} 
- (\varphi \otimes \psi_n)_{J_n(a)}\|
\le \| Q_n J_n(a) - J_n(a) \|_{\varphi\otimes \psi_n}
\to 0 \quad (n \to \infty)\,.
\]
But with Lemma \ref{lem:J-psi}(1) and (3) we get (for all $n$)
\[
\| (\varphi \otimes \psi_n)_{J_n(a)} \| 
= \| J^{\psi_n}_n \varphi_a \| = \| \varphi_a \|\,,
\]
hence also
\[
\| (J^{\psi_n}_n \varphi_a) |_{\eins \otimes \CC_n} \| = \| (\varphi \otimes \psi_n)_{Q_n J_n(a)} \|  \to \| \varphi_a \|
\quad (n \to \infty)\,,
\]
which proves our claim.

It is clear that this implies the observability of $\CA_*$ by $\alpha^{-1}$ and the product states $(\psi_n)$, in fact the limit formula valid for all $\sigma \in \CA_*$ beyond injectivity even provides an additional uniformity property of the observability map.

In (3) the implication $(a) \Rightarrow (b)$ follows from (2) and Lemma \ref{lem:ac}. For the implication $(b) \Rightarrow (a)$
suppose now that $\CA = \CB(\CH)$. 
Note that $J$ is tight,
by Proposition \ref{prop:J-tight}(4), so we are in the setting of Theorem \ref{th:main}.
Consider again the equality
\[
\| Q_n J_n(a) \|_{\psi_n} = \sup_{c \in \CC_n, \|c\|_{\psi_n}=1}
|((\alpha^{-1})_{n})_* (\varphi_a \otimes \psi_n)\, (\eins \otimes c)| 
 \quad \text{for all} \;\; a \in \CA\,,
\]
from Lemma \ref{lem:observable}. 
If $\CB(\CH)_*$ is observable by $\alpha^{-1}$ and the product states $(\psi_n)$ 
then it follows that for all $0 \not=a \in \CA$ this expression must be strictly positive for some $n$. If $J$ is (by assumption) also irreducible then we 
have verified (d1) of  Theorem \ref{th:main} which now implies that $J: (\CB(\CH),\varphi) \rightarrow (\CB(\CH) \otimes \CC, \varphi \otimes \psi)$ is asymptotically complete. 
%\qed
\end{proof}

%%%%%%%%%%%%%%%%%%%%%%%%%%%%%%%%%%%%%%%%%%%%%%%%%%%%%%%%%%%%%%%%%%%%%%%%%%%%%%%%%
%%%%%%%%%%%%%%%%%%%%%%%%%%%%%%%%%%%%%%%%%%%%%%%%%%%%%%%%%%%%%%%%%%%%%%%%%%%%%%%%%

Theorem \ref{th:main} contains further interesting insights about asymptotic completeness if we combine it with earlier results about
tightness in Proposition \ref{prop:J-tight}. We work out the most important case, namely transitions on $\CB(\CH)$ induced by coupling automorphisms.

\begin{Corollary} \label{cor:ac-independent}
Let the transition $J: \CB(\CH) \rightarrow \CB(\CH) \otimes \CC$ be induced by a coupling automorphism $\alpha$ of $\CB(\CH) \otimes \CC$ such that $J(a) = \alpha(a \otimes \eins)$ for all $a \in \CB(\CH)$. Suppose further that $\varphi_1, \varphi_2$ on $\CB(\CH)$ and $\psi_1, \psi_2$ on $\CC$ are faithful normal states such that $\varphi_1 \otimes \psi_1$ and $\varphi_2 \otimes \psi_2$ are stationary for the automorphism $\alpha$. Then $J: (\CB(\CH),\varphi_1) \rightarrow (\CB(\CH) \otimes \CC, \varphi_1 \otimes \psi_1)$ is asymptotically complete if and only if $J: (\CB(\CH),\varphi_2) \rightarrow (\CB(\CH) \otimes \CC, \varphi_2 \otimes \psi_2)$ is asymptotically complete.
\end{Corollary}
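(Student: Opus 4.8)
The plan is to route the entire argument through property (c) of Theorem \ref{th:main}, topological transitivity, because unlike asymptotic completeness (Definition \ref{def:ac}) this property refers only to the transition $J$ and makes no reference whatsoever to the auxiliary faithful states $\varphi$ and $\psi$. Indeed, inspecting Definition \ref{def:preparation}, both $J$-preparability and topological transitivity are phrased purely in terms of the weak convergence of the states $(\sigma \otimes \theta_k) J_{n_k}$ on $\CB(\CH)$, so they are intrinsic invariants of $J$, equivalently of the coupling automorphism $\alpha$.

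First I would verify that Theorem \ref{th:main} is applicable to each of the two choices of stationary product state. This is exactly the content of Proposition \ref{prop:J-tight}(4): since $J$ is induced by the coupling automorphism $\alpha$ and $\varphi_i \otimes \psi_i$ is stationary for $\alpha$, the transition $J: (\CB(\CH),\varphi_i) \rightarrow (\CB(\CH) \otimes \CC, \varphi_i \otimes \psi_i)$ is tight for $i=1,2$. Hence the full list of equivalences in Theorem \ref{th:main} holds for both pairs of states.

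With this in hand the conclusion is immediate. Applying the equivalence (a) $\Leftrightarrow$ (c) of Theorem \ref{th:main} with $(\varphi_1,\psi_1)$, asymptotic completeness of $J: (\CB(\CH),\varphi_1) \rightarrow (\CB(\CH) \otimes \CC, \varphi_1 \otimes \psi_1)$ is equivalent to topological transitivity of $J$; applying the same equivalence with $(\varphi_2,\psi_2)$, asymptotic completeness of $J: (\CB(\CH),\varphi_2) \rightarrow (\CB(\CH) \otimes \CC, \varphi_2 \otimes \psi_2)$ is likewise equivalent to topological transitivity of $J$. Since topological transitivity is one and the same condition in both cases, the two notions of asymptotic completeness are equivalent, which is the assertion.

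There is no genuine obstacle once Theorem \ref{th:main} is available; the whole force of the corollary is the conceptual observation that asymptotic completeness, although defined through the state-dependent seminorms $\|\cdot\|_{\varphi \otimes \psi_n}$ and the conditional expectation $Q_n$, is sandwiched between state-free equivalent properties. The only point deserving care is to confirm that the bridging property carries no hidden dependence on $\varphi$ or $\psi$, which is transparent from Definition \ref{def:preparation}; one could equally well use property (b1), universal $J$-preparability, in place of (c).
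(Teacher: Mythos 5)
Your proposal is correct and matches the paper's own proof essentially verbatim: the paper likewise invokes Proposition \ref{prop:J-tight}(4) to obtain tightness for both stationary product states and then observes that condition (c) of Theorem \ref{th:main} (topological transitivity) makes no reference to $\varphi$ or $\psi$, so it serves as the state-independent bridge. Nothing is missing; your remark that (b1) would work equally well is consistent with the paper's phrasing that ``some of the assertions'' equivalent to asymptotic completeness do not involve the states.
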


\begin{proof}
Both
$J: (\CB(\CH),\varphi_1) \rightarrow (\CB(\CH) \otimes \CC, \varphi_1 \otimes \psi_1)$ and $J: (\CB(\CH),\varphi_2) \rightarrow (\CB(\CH) \otimes \CC, \varphi_2 \otimes \psi_2)$ 
are tight by Proposition \ref{prop:J-tight}(4), so we are in the setting
of Theorem \ref{th:main}. But some of the assertions equivalent to asymptotic completeness in Theorem \ref{th:main}, for example
\ref{th:main}(c), do not involve the states $\varphi$ and $\psi$. 
\end{proof} 

This suggests a natural definition of asymptotic completeness for $\alpha$ itself.

\begin{Definition} \label{def:coup-ac}
We call a coupling automorphism $\alpha: \CB(\CH) \otimes \CC \rightarrow \CB(\CH) \otimes \CC$ asymptotically 
complete if there exist faithful normal states $\varphi$ on $\CB(\CH)$ and $\psi$ on $\CC$ such that $\varphi \otimes \psi$ is stationary for
$\alpha$ and for any choice of such states (and hence for all choices of such states) the induced transition 
$J: (\CB(\CH),\varphi) \rightarrow (\CB(\CH) \otimes \CC, \varphi \otimes \psi)$ is asymptotically complete.
\end{Definition}

Based on the other parts of Proposition \ref{prop:J-tight} we can in a similar way find other situations where asymptotic completeness is preserved under changes in the choice of the faithful normal states $\varphi$ and $\psi$. For example this follows for transitions of the form $J: \CB(\CH) \rightarrow \CB(\CH) \otimes \CB(\CK)$, with $\CH$ and $\CK$ finite dimensional, from Proposition \ref{prop:J-tight}(1).
In the finite dimensional case this remarkable phenomenon has already been observed in \cite{GKL06}, Proposition 4.4, where moreover it is shown that in this case the dual extended transition
operators corresponding to different choices of states are similar to each other. 

We finish this section with an easy example illustrating some further subtleties in the relationship between asymptotic completeness and preparability of states. 

\begin{Example} \normalfont
The simplest examples of asymptotically complete transitions are tensor flips, for example $J: M_2 \rightarrow M_2 \otimes M_2,\; a \mapsto \eins \otimes a$. In this case obviously only one step is needed to prepare any desired state. Let us modify this example and consider the transition  $J: M_2 \rightarrow M_2 \otimes M_2$ determined
by $J(b) := \sigma_x \otimes b$, where
$b := 
\left( \begin{array}{cc}
0 &  0 \\
1 &  0 \\
       \end{array} 
\right)$
and
$\sigma_x := 
\left( \begin{array}{cc}
0 &  1 \\
1 &  0 \\
       \end{array} 
\right)$.
This means that $J(a) = \eins \otimes a$ if $a$ is diagonal and $J(x) = \sigma_x \otimes a$ if $a$ is off-diagonal (i.\,e., with zero entries on the diagonal).
We have $J_n(\sigma_x) = \sigma_x \otimes \bigotimes^n_{j=1} \sigma_x$.
Choosing the tracial state we find $Q_n J_n(\sigma_x) = 0$ for all $n$ and condition (d1) of Theorem \ref{th:main} is not satisfied, hence $J$ is not asymptotically complete. But note that the vector states $\omega_{\delta_0}$ and $\omega_{\delta_1}$ from the canonical basis 
$\delta_0 = 
\left( \begin{array}{c}
 0 \\
 1 \\
       \end{array} 
\right)$
and 
$\delta_1 = 
\left( \begin{array}{c}
 1 \\
 0 \\
       \end{array} 
\right)$ of $\Cset^2$
are both universally $J$-preparable. In fact, this can still be done in one step by choosing $\omega_{\delta_0}$ resp. $\omega_{\delta_1}$ as the preparing state $\theta$ on $M_2$: it is easily checked that for any state $\rho$ on $M_2$ and $a \in M_2$ we have
\[
(\rho \otimes \omega_{\delta_j})\, J(a) = \omega_{\delta_j}(a) \quad\quad (j=0,1).
\]
This shows that universal preparability of all vector states corresponding to a basis is not sufficient for asymptotic completeness.
%%R herausgenommen %%BK Warum? Kann aber auch wegbleiben, ist mir nicht wichtig.
%A more interesting class of examples is discussed in the following section.
\end{Example}

%%%%%%%%%%%%%%%%%%%%%%%%%%%%%%%%%%%%%%%%%%%%%%%%%%%%%%%%%%%%%%%%%%%%%%%%%%%%%%%%%
%%%%%%%%%%%%%%%%%%%%%%%%%%%%%%%%%%%%%%%%%%%%%%%%%%%%%%%%%%%%%%%%%%%%%%%%%%%%%%%%%
%%%%%%%%%%%%%%%%%%%%%%%%%%%%%%%%%%%%%%%%%%%%%%%%%%%%%%%%%%%%%%%%%%%%%%%%%%%%%%%%%
%%%%%%%%%%%%%%%%%%%%%%%%%%%%%%%%%%%%%%%%%%%%%%%%%%%%%%%%%%%%%%%%%%%%%%%%%%%%%%%%%

\section{A Class of Quantum Birth and Death Chains}
\label{section:example}

In this section we apply our theory to an interesting example, a class of quantum birth and death chains which appears in quantum optics experiments involving repeated interactions, for example between a micromaser and a stream of atoms. We follow \cite{BGKRSS} where the setting is described and interpreted in more detail and further results about its properties can be found.

%%%% Hilbertraum
Consider the Hilbert space $\CH:=\ell^2(\Nset_0)$ with canonical orthonormal basis $(\delta_n)_{n=0}^\infty$ and the Hilbert space $\Cset^2$ with canonical orthonormal basis 
$\epsilon_0 := \left( \begin{array}{c}
 0 \\
 1 \\
       \end{array} 
\right)$
and $\epsilon_1 := \left( \begin{array}{c}
 1 \\
 0 \\
       \end{array} 
\right)$,
so that with $\delta_{n,0} := \delta_n\otimes \epsilon_0$ and $\delta_{n,1} := \delta_n\otimes \epsilon_1$ the vectors $(\delta_{n,\epsilon})_{n \in \Nset_0, \epsilon \in \{0,1\}}$ form an orthonormal basis of $\CH \otimes \Cset^2$. For $n\in \Nset_0$ consider the {\it n-particle space} $\CH_n$ where $\CH_0$ is spanned by $\delta_0\otimes \epsilon_0$ and for $n \ge 1$ the subspace $\CH_n$ is spanned by $\{\delta_{n-1}\otimes \epsilon_1, \delta_n\otimes \epsilon_0\}$. Then 
$\CH\otimes \Cset^2 = \bigoplus_{n\in \Nset_0} \CH_n$. 

%%%%%%% Unitary u

On $\CH \otimes \Cset^2$ we consider a unitary $u$ which leaves the subspaces $\CH_n$ ($n \in \Nset_0$) invariant. Then $u$ on $\CH_0$ is multiplication by a complex number $\alpha_0$ with $|\alpha_0|=1$ while on $\CH_n$ with $n \ge 1$ it is given by a unitary $2\times 2$ matrix which we conveniently denote by 
\[u_n =  \left( \begin{array}{cc}
               \alpha^+_n &  \beta^+_n \\
 \beta_n &  \alpha_n \\
       \end{array} 
\right)\,,
\]
such that 

\begin{eqnarray*}
u_n\, \delta_{n-1,1} &=& \alpha^+_n\, \delta_{n-1,1} + \beta_n\,\delta_{n,0}\, ,\\
u_n\, \delta_{n,0}   &=& \beta^+_n\,\delta_{n-1,1} +\alpha_n\, \delta_{n,0} \,.
\end{eqnarray*}

\noindent
Since $u_n$ is unitary we obtain the relations $|\alpha_n|=|\alpha_n^+|$, 
$|\beta_n|=|\beta_n^+|$, and $|\alpha_n|^2 + |\beta_n|^2 = 1$ for all $n \ge 1$.

%%%% Coupling
\noindent
The unitary $u$ defines a coupling automorphism
\begin{eqnarray*}
\alpha:\CB(\CH) \otimes M_2 &\to& \CB(\CH)\otimes M_2,\\
   x\otimes y &\mapsto& u^*\,x\otimes y\, u
\end{eqnarray*}
which in turn induces a transition $J:\CB(\CH) \to \CB(\CH)\otimes M_2$ by $J(x):= \alpha(x\otimes \eins)$. We call $\alpha$ a {\it generalized micromaser coupling} and $J$ a {\it generalized micromaser transition} (cf. the discussion below).

%%%% Invariante Zust�nde

On $M_2$ we consider, for $0 \le \lambda \le 1$, the state $\psi_\lambda$ given by 
$\psi_\lambda (\cdot) := \Trace(d_\lambda \cdot)$ with $d_\lambda :=
\left( \begin{array}{cc}
\lambda & 0 \\
0 & 1-\lambda \\
     \end{array} 
\right).
$
For $0 \le \lambda < \frac12$ we define the state $\varphi_\lambda$ on $\CB(\CH)$ which is given by the diagonal density matrix $\nu_\lambda$ with $\nu_\lambda(\delta_n)=\frac{1-2\lambda}{1-\lambda}(\frac{\lambda}{1-\lambda})^n\;\delta_n$ $(n\in \Nset_0)$. 

Since $\delta_{n-1,1}$ and $\delta_{n,0}$ are both eigenvectors of $\nu_\lambda \otimes d_\lambda$ with the same eigenvalue $\frac{1-2\lambda}{1-\lambda}\frac{\lambda^n}{(1-\lambda)^{n-1}}$,
%Since
%\[
%(\nu_\lambda\otimes d_\lambda) (\delta_{n-1,1}) =  
%\frac{1-2\lambda}{1-\lambda}\frac{\lambda^n}{(1-\lambda)^{n-1}} = \
%(\nu_\lambda\otimes d_\lambda) (\delta_{n,0})\, ,
%\]
the density matrix $\nu_\lambda \otimes d_\lambda$ is constant on the subspaces $\CH_n$ ($n\in \Nset$) and thus it commutes with the unitary $u$. Therefore, for $0 \le \lambda < \frac 12$ the state $\varphi_\lambda\otimes\psi_\lambda$ is invariant under the coupling automorphism $\alpha$ (for $\lambda \ge \frac 12$ it would not define a state at all). We summarize these considerations as follows.

\begin{Proposition}[cf. \cite{BGKRSS}, Proposition 4.4] \label{prop:lambda}
Let $J$ be a generalized micromaser transition. Then for all $0 \le \lambda < \frac{1}{2}$, with the states $\varphi_\lambda$ and $\psi_\lambda$ defined above, we have
\[
J: (\CB(\CH),\varphi_\lambda) \rightarrow (\CB(\CH),\varphi_\lambda) \otimes (M_2, \psi_\lambda).
\]
Clearly, the states $\varphi_0$ and $\psi_0$ are vector states, while 
for $0 < \lambda < \frac{1}{2}$ the states
$\varphi_\lambda$ and $\psi_\lambda$ are faithful.
\end{Proposition}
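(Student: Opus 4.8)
The plan is to reduce the stationarity relation $J:(\CB(\CH),\varphi_\lambda)\to(\CB(\CH)\otimes M_2,\varphi_\lambda\otimes\psi_\lambda)$ to the invariance of the product state $\varphi_\lambda\otimes\psi_\lambda$ under the coupling automorphism $\alpha$, and to reduce that in turn to a commutation relation between the defining unitary $u$ and the density matrix $\nu_\lambda\otimes d_\lambda$ of $\varphi_\lambda\otimes\psi_\lambda$. Since $\alpha(z)=u^*\,z\,u$, cyclicity of the trace gives
\[
(\varphi_\lambda\otimes\psi_\lambda)(\alpha(z))=\Trace\big((\nu_\lambda\otimes d_\lambda)\,u^*\,z\,u\big)=\Trace\big(u\,(\nu_\lambda\otimes d_\lambda)\,u^*\,z\big),
\]
so $\alpha$-invariance is equivalent to $u\,(\nu_\lambda\otimes d_\lambda)=(\nu_\lambda\otimes d_\lambda)\,u$.

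The core step is to verify this commutation by exploiting the block structure. The unitary $u$ leaves every $n$-particle space $\CH_n$ invariant, so it suffices to show that $\nu_\lambda\otimes d_\lambda$ acts as a scalar on each $\CH_n$; an operator that is scalar on each member of a family of invariant subspaces automatically commutes with any operator preserving those subspaces. For $n\ge1$ the space $\CH_n$ is spanned by $\delta_{n-1,1}$ and $\delta_{n,0}$, and I would compute the two eigenvalues of $\nu_\lambda\otimes d_\lambda$ on these vectors as $\nu_\lambda(\delta_{n-1})\,\lambda$ and $\nu_\lambda(\delta_n)\,(1-\lambda)$ respectively; inserting the diagonal weights of $\nu_\lambda$ shows that both equal $\frac{1-2\lambda}{1-\lambda}\frac{\lambda^n}{(1-\lambda)^{n-1}}$, while on $\CH_0=\Cset\,\delta_{0,0}$ the scalar is $1-2\lambda$. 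Hence $\nu_\lambda\otimes d_\lambda$ is scalar on each $\CH_n$, the commutation holds, and $\varphi_\lambda\otimes\psi_\lambda$ is $\alpha$-invariant.

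From $\alpha$-invariance the asserted Markov-chain relation is immediate: for every $a\in\CB(\CH)$,
\[
(\varphi_\lambda\otimes\psi_\lambda)(J(a))=(\varphi_\lambda\otimes\psi_\lambda)(\alpha(a\otimes\eins))=(\varphi_\lambda\otimes\psi_\lambda)(a\otimes\eins)=\varphi_\lambda(a),
\]
which is precisely the statement that $\varphi_\lambda$ is stationary for $T_{\psi_\lambda}$, i.e.\ $J:(\CB(\CH),\varphi_\lambda)\to(\CB(\CH)\otimes M_2,\varphi_\lambda\otimes\psi_\lambda)$. The final claims about faithfulness I would read directly off the density matrices: for $0<\lambda<\tfrac12$ all diagonal entries $\frac{1-2\lambda}{1-\lambda}\big(\frac{\lambda}{1-\lambda}\big)^n$ of $\nu_\lambda$ and both entries $\lambda,1-\lambda$ of $d_\lambda$ are strictly positive, so $\varphi_\lambda$ and $\psi_\lambda$ are faithful; for $\lambda=0$ one has that $\nu_0$ is the projection onto $\Cset\,\delta_0$ and $d_0=\mathrm{diag}(0,1)$, so $\varphi_0=\omega_{\delta_0}$ and $\psi_0=\omega_{\epsilon_0}$ are vector states.

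Since the surrounding calculation is entirely routine, I do not anticipate a genuine obstacle; the only point demanding care is keeping the index conventions consistent — in particular that $d_\lambda$ assigns the weight $\lambda$ to $\epsilon_1$ and $1-\lambda$ to $\epsilon_0$ — so that the two eigenvalues on each $\CH_n$ genuinely coincide and the crucial scalarity is obtained. Everything else is bookkeeping with the block decomposition $\CH\otimes\Cset^2=\bigoplus_{n}\CH_n$.
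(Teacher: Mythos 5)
Your proposal is correct and follows essentially the same route as the paper: the paper likewise observes that $\delta_{n-1,1}$ and $\delta_{n,0}$ are eigenvectors of $\nu_\lambda\otimes d_\lambda$ with the common eigenvalue $\frac{1-2\lambda}{1-\lambda}\frac{\lambda^n}{(1-\lambda)^{n-1}}$, so the density matrix is scalar on each invariant subspace $\CH_n$, commutes with $u$, and hence $\varphi_\lambda\otimes\psi_\lambda$ is $\alpha$-invariant, which yields the stationarity relation via the standard equivalence $(\varphi\otimes\psi)J=\varphi$. Your explicit trace-cyclicity step and the verification of faithfulness (all diagonal weights strictly positive for $0<\lambda<\tfrac12$) and of the vector-state case $\lambda=0$ merely spell out what the paper leaves implicit, with the index conventions handled correctly.
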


%%%%%%%%%%%%%%%%%%%%%%%%%%%%%%%%% Interpretation

\begin{Remark} \normalfont  \label{rem:jaynes-cummings} These kinds of couplings occur in dynamics resulting from Jaynes-Cummings type interactions in quantum optics. Most prominently such an interaction is found in the micromaser system, where one quantum of energy is exchanged between an incoming two level atom and a mode of the electromagnetic field in a cavity. Some consequences of asymptotic completeness for this system have been studied in \cite{WBKM00} and further information on this system may be found in \cite{MS91,HR06}. The same mathematical model also applies to a single ion in a trap or to a neutral atom in a laser trap (cf. \cite{HR06}).

In order to relate our discussion to these physical models we start with briefly reviewing the relevant formulas from \cite{HR06}, 3.4: `Coupling a spin and a spring: the Jaynes-Cummings model', and refer for more explanations and details to this book.

We have an atomic Hamiltonian $H_a = \frac{\hbar\, \omega_{eg}}{2} \sigma_z$ and a cavity Hamiltonian $H_c = \hbar \omega_c N$. Here 
$\sigma_z = 
\left( \begin{array}{cc}
1 &  0 \\
0 &  -1 \\
       \end{array} 
\right)$,
$N$ is the number operator of the harmonic oscillator, and $\omega_c$ and $\omega_{eg}$ are frequencies associated to the cavity and to the atom.
If the frequencies $\omega_c$ and $\omega_{eg}$ are close then the rotating wave approximation simplifies the atom-cavity coupling Hamiltonian to 
$H_{ac} = -i \hbar \frac{\Omega_0}{2} \big[ a \sigma_+ - a^\dagger \sigma_- \big]$, where $a, a^\dagger$ and $\sigma_-, \sigma_+$ are annihilation and creation operators for the cavity and for the atom and $\Omega_0$ is the vacuum Rabi frequency.

Thus one arrives at a total Hamiltonian
\[
H = H_a + H_c + H_{ac} = 
\frac{\hbar\, \omega_{eg}}{2} \sigma_z + \hbar \omega_c N +  
(-i \hbar \frac{\Omega_0}{2}) \big[ a \sigma_+ - a^\dagger \sigma_- \big]\, .
\]
The quantity $\Delta_c = \omega_{eg} - \omega_c$ is called atom-cavity detuning, the special case $\Delta_c = 0$ is called resonant while the effects of $\Delta_c \not=0$ are referred to as results of detuning. 

Eigenstates for $H_a + H_c$ are $|g,n\rangle, |e,n\rangle$, where $|n\rangle$ is the $n$-photon state of the cavity 
($n \in \Nset_0$) and $|g\rangle, \, |e\rangle$ are the ground state and the excited state of the atom. Because $H = H_a + H_c + H_{ac}$ preserves the excitation number the dynamics connects only states inside the doublets formed by $|e,n\rangle$ and $|g,n+1\rangle$ (and leaves the ground state $|g,0\rangle$ unchanged). More explicitly, with the so called $n$-photon Rabi frequency $\Omega_n = \Omega_0 \sqrt{n+1}$ and the angle $\theta_n$ given by $\tan \theta_n = \frac{\Omega_n}{\Delta_c}$ 
(or $\theta_n = \frac{\pi}{2}$ in the resonant case) the eigenstates of the coupled atom-cavity system (also called dressed states) are
given by
\begin{eqnarray*}
|+,n\rangle &=& \cos \frac{\theta_n}{2}\, |e,n\rangle + i \sin \frac{\theta_n}{2}\, |g,n+1\rangle, \\
|-,n\rangle &=& \sin \frac{\theta_n}{2}\, |e,n\rangle - i \cos \frac{\theta_n}{2}\, |g,n+1\rangle.
\end{eqnarray*}
The corresponding energies are $E^{\pm}_n = \frac{n+1}{2} \hbar \omega_c \pm \frac{\hbar}{2} \sqrt{\Delta^2_c +
\Omega^2_n}$ and the time evolution of any state, in particular of initial states $|e,n\rangle$ and $|g,n+1\rangle$, can now be explicitly computed. 

Consider now the micromaser experiment (for a more detailed discussion of this system we refer to \cite{WBKM00}): In this experiment single atoms are sent one after the other through a cavity. Only during their passage through the cavity they interact with the field mode inside. Within a good approximation all atoms pass the cavity with the same velocity and there is only one atom inside the cavity at a time. Therefore, the effect of the interaction of each of the atoms with the field is described by $e^{-iHT/\hbar}$ for a fixed effective interaction time $T$ determined by the velocity of the atoms and the size of the cavity. During this time there is a continuous reversible exchange of energy between $|e,n\rangle$ and $|g,n+1\rangle$ with a period determined by the corresponding Rabi frequency. It may happen, however, that for certain values of the Rabi frequency and the interaction time T the total effect of the interaction during the atoms passage through the cavity results in an integer number of Rabi oscillations. In this case no energy has been exchanged at the moment when the atoms leaves the cavity and the states 
$|e,n\rangle$ and $|g,n+1\rangle$ remain unaltered. Such a situation is referred to as a {\it trapped state condition}.

The micromaser is easily related to our previous discussion: There the vectors $|n\rangle$ are denoted by $\delta_n$ ($n \in \Nset_0)$, $|g\rangle$ and $|e\rangle$ are called $\epsilon_0$ and $\epsilon_1$, correspondingly $|g,n\rangle$ and $|e,n\rangle$ become $\delta_{n,0}$ and $\delta_{n,1}$. The total Hamiltonian $H$ leaves the n-particle spaces $\CH_n$ spanned by 
$|g,n\rangle$ and $|e,n-1\rangle$ ($n\in \Nset$) invariant, as well as the one-dimensional subspace spanned by the vacuum 
$|g,0\rangle$, and so does the unitary $e^{-iHT/\hbar}$ which corresponds to the unitary $u$ defined above. Thus the parameters $|\alpha_n|^2$, $|\alpha^+_n|^2$, $|\beta_n|^2$, and $|\beta^+_n|^2$ determine the resulting transition probabilities. In particular, a trapped state condition occurs exactly if one of the parameters $\beta_n$, $n \ge 1$, vanishes (in this case $\beta^+_n =0$, too). In this case asymptotic completeness is, clearly, impossible because certain transitions between energy levels are forbidden. 

If there is no trapping state, however, then we prove asymptotic completeness for our model in the following theorem. The above discussion shows that this result can be applied, in particular, to the Jaynes-Cummings model, both in the resonant and in the detuned case, as long as the rotating wave approximation is appropriate. We finally remark that from the discussion in \cite{WBKM00} it is immediate that the iterated transitions $J_n$ describe (in the Heisenberg picture) the effect of $n$ atoms having passed the cavity. 

\end{Remark}

%%%%%%%%%%%%%%%%%%%%%%%%%%%%%%%%%  Blockmatrix-Schreibweise

\noindent
For the following discussion it is convenient to have an alternative description of the unitary $u$. We identify $\CH \otimes \Cset^2$ with $\CH \oplus \CH$ and $u \in \CB(\CH \otimes \Cset^2) = \CB(\CH) \otimes M_2 = M_2(\CB(\CH))$ with a $2\times 2$ block matrix with entries from $\CB(\CH)$.
%$\begin{pmatrix} u_{00}& u_{0,1}\\u_{1,0}&u_{11}\end{pmatrix}$. 
On the Hilbert space $\CH = \ell^2(\Nset_0)$ we define the diagonal operators $a$ with diagonal $(\alpha_n)^\infty_{n=0}$, $a^+$ with diagonal $(\alpha^+_{n+1})^\infty_{n=0}$ (note that here the index is shifted by 1), $b$ with
diagonal $(\beta_n)^\infty_{n=0}$, and $b^+$ with
diagonal $(\beta^+_n)^\infty_{n=0}$, and we may put $\beta_0=0=\beta^+_0$. Let $s$ denote the right shift on
$\CH = \ell^2(\Nset_0)$, i.e., $s\, \delta_n = \delta_{n+1}$ for all $n$. Then $u$ is given by the $2\times 2$-block matrix

\[
u = 
\left( \begin{array}{cc}
 a^+ &  s^* b^+ \\
 b s &  a \\
       \end{array} 
\right)\,.
\]

%%%%%%%%%%%%%%%%%%%%%%%%%%%%%%%%%%%%%%%%%%�bergansoperator
\noindent
The input state $\psi_\lambda$ induces on $\CB(\CH)$ the transition operator 
$T_{\psi_\lambda} = P_{\psi_\lambda} J: \CB(\CH) \rightarrow \CB(\CH)$ which is given by
\[
T_{\psi_\lambda}(x)=\lambda\big((a^+)^* xa^+ + s^*b^*\,x\,bs\big) + (1-\lambda)\big((b^+)^*s\,x\,s^*b^+ + a^*xa\big) \, .
\]

\noindent
The analysis of the transition $J$ is simplified by the fact that the transition operator $T_{\psi_\lambda}$ leaves invariant the diagonal algebra $\CD \,(\simeq \ell^\infty(\Nset_0))$ which is obtained as the weak$^*$-closure of the linear span of the one-dimensional projections $(p_{\delta_n})^\infty_{n=0}$. The restriction $T_{\CD, \psi_\lambda}$ 
of $T_{\psi_\lambda}$ to the diagonal $\CD$
is the transition matrix of a classical birth and death chain which is given by
%\[
\begin{multline*}
T_{\CD, \psi_\lambda}
=\\ \hspace*{5mm} %\shoveleft{    %%soll die Matrix nach rechts schieben, Auswirkung gering
\left( \begin{array}{cccccc}
(1-\lambda) + \lambda |\alpha_1|^2 &  \lambda |\beta_1|^2  & 0 & 0 & 0 &\ldots  \\
(1-\lambda) |\beta_1|^2 & (1-\lambda)|\alpha_1|^2 + \lambda |\alpha_2|^2 &  \lambda |\beta_2|^2 & 0 & 0 &\ldots  \\
0 &  (1-\lambda)|\beta_2|^2  & (1-\lambda)|\alpha_2|^2 + \lambda |\alpha_3|^2 &  \lambda |\beta_3|^2  & 0 &\ldots  \\
 \vdots  & \vdots   & \vdots & \vdots & \vdots &\ddots \\                                   
       \end{array} 
\right).
\end{multline*}
%\]
Here we used the fact that $|\alpha_n|=|\alpha^+_n|$ and $|\beta_n|=|\beta^+_n|$ for $n\ge 1$.

We can now check (cf. \cite{BGKRSS}, Proposition 4.4, it also follows from the considerations above)  that 
for $0 \le \lambda < \frac{1}{2}$ there is a stationary probability measure $\nu_\lambda$ for $T_{\CD, \psi_\lambda}$ given by $\nu_\lambda(n) =
\frac{1-2\lambda}{1-\lambda}(\frac{\lambda}{1-\lambda})^n\; (n \in \Nset_0)$, i.\,e. $\nu_\lambda$ is the density of $\varphi_\lambda$.
\\ {\phantom{x}} %% erspart eine underfull \hbox
\\
We now prove asymptotic completeness. We remark that asymptotic completeness of a finite dimensional cut-off version of this transition has been proved in \cite{GKL06}, Section 6. For the infinite dimensional version considered here we need the tools prepared in the previous sections.

%%%%%%%%%%%%%%%%%%%%%%%%%%%%%%%%%%%%%%%%%%%%%%%%%%%%%%%%%%%%%%%%%%%%%%%%%%%%%%%%%
%%%%%%%%%%%%%%%%%%%%%%%%%%%%%%%%%%%%%%%%%%%%%%%%%%%%%%%%%%%%%%%%%%%%%%%%%%%%%%%%%
%%%% Hauptsatz

\begin{Theorem} \label{th:micromaser} If in the unitary $u$ we have $\beta_n \not=0$ for all $n\in \Nset$, i.\,e., in the absence of trapping states, the generalized micromaser coupling $\alpha$ is asymptotically complete
(in the sense of Definition \ref{def:coup-ac}). 
\end{Theorem}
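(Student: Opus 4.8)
The plan is to establish asymptotic completeness by way of universal preparability, using the time-reversal criterion of Theorem \ref{th:reverse} with the vacuum vector state as the intermediate state. I would first fix $\lambda$ with $0<\lambda<\frac12$; then by Proposition \ref{prop:lambda} the states $\varphi_\lambda$ and $\psi_\lambda$ are faithful and $\varphi_\lambda\otimes\psi_\lambda$ is stationary for $\alpha$, so by Proposition \ref{prop:J-tight}(4) the induced transition $J$ is tight and we are in the setting of Theorem \ref{th:main}. By Definition \ref{def:coup-ac} and Corollary \ref{cor:ac-independent} it suffices to prove asymptotic completeness of $J:(\CB(\CH),\varphi_\lambda)\to(\CB(\CH)\otimes M_2,\varphi_\lambda\otimes\psi_\lambda)$ for this one choice of states, and by the equivalence (b1)$\Leftrightarrow$(a) in Theorem \ref{th:main} this reduces to showing that all normal states on $\CB(\CH)$ are universally $J$-preparable. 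For the latter I would invoke Theorem \ref{th:reverse}: it is enough to exhibit a single vector state that is both universally $J$-preparable and universally $J^r$-preparable, and the natural candidate is the vacuum state $\omega_{\delta_0}$.

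The key point is that both preparations can be carried out by repeatedly feeding in ground-state atoms, i.e.\ with the constant preparing sequences $\theta_n := \psi_0^{\otimes n}$ where $\psi_0 = \omega_{\epsilon_0}$. Because $\theta_n$ is a product state one has $(\sigma\otimes\theta_n)\,J_n = \sigma\circ(T_{\psi_0})^n$, so $\omega_{\delta_0}$ is universally $J$-preparable as soon as it is absorbing for the transition operator $T_{\psi_0}$ in the sense of Proposition \ref{prop:absorbing}. From the block form of $u$ one computes $T_{\psi_0}(x) = (b^+)^*\,s\,x\,s^*b^+ + a^*\,x\,a$, the $\lambda=0$ specialization of the displayed formula for $T_{\psi_\lambda}$, which leaves the diagonal algebra $\CD$ invariant and acts there as the pure death chain
\[
T_{\psi_0}(p_{\delta_n}) = |\alpha_n|^2\,p_{\delta_n} + |\beta_{n+1}|^2\,p_{\delta_{n+1}}.
\]

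I would then check condition (c2) of Proposition \ref{prop:absorbing}, i.e.\ $T_{\psi_0}^{\,n}(p_{\delta_0})\to\eins$ in the weak$^*$ topology. Since $|\alpha_0|=1$ and $\beta_0=0$, the projection $p_{\delta_0}$ is subharmonic, so the increasing bounded sequence $T_{\psi_0}^{\,n}(p_{\delta_0})$ converges strongly to a harmonic element $a$, which lies in the weak$^*$-closed diagonal $\CD$. Writing $a=\sum_n a_n\,p_{\delta_n}$, the fixed-point equation together with $|\alpha_n|^2+|\beta_n|^2=1$ collapses to $|\beta_n|^2(a_n-a_{n-1})=0$; as $\beta_n\neq0$ for every $n\ge1$ this forces $a$ to be constant, and since the $0$-th diagonal entry is left unchanged by $T_{\psi_0}$ we get $a_0=1$, hence $a=\eins$. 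Thus $\omega_{\delta_0}$ is absorbing for $T_{\psi_0}$ and so universally $J$-preparable. For the reverse transition the same computation gives $T^r_{\psi_0}(x) = b\,s\,x\,s^*b^* + a\,x\,a^*$, whose restriction to $\CD$ involves only the moduli $|\alpha_n|,|\beta_n|$ and therefore agrees with $T_{\psi_0}$ on the generators $p_{\delta_n}$; the identical argument shows $\omega_{\delta_0}$ is absorbing for $T^r_{\psi_0}$, hence universally $J^r$-preparable. Theorem \ref{th:reverse} now yields universal $J$-preparability of all normal states, Theorem \ref{th:main} gives asymptotic completeness for the chosen states, and Corollary \ref{cor:ac-independent} together with Definition \ref{def:coup-ac} completes the proof.

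The main obstacle is the infinite dimension of $\CH$: in contrast to the finite-dimensional cut-off treated in \cite{GKL06}, one cannot simply appeal to irreducibility of a finite matrix, and the convergence $T_{\psi_0}^{\,n}(p_{\delta_0})\to\eins$ really requires controlling the chain at infinity. Two structural features make this manageable. First, the invariance of the diagonal $\CD$ reduces everything to a scalar death chain and removes any worry about off-diagonal matrix elements. Second, the tightness theory of Section \ref{section:stationary states}, embodied in Proposition \ref{prop:absorbing}, is what turns the weak$^*$ limit into genuine absorption. The no-trapping hypothesis $\beta_n\neq0$ enters exactly where it rules out nonconstant bounded harmonic sequences, which is the analytic core of the argument.
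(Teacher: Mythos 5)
Your proposal is correct, and its overall architecture coincides with the paper's proof: fix $0<\lambda<\frac12$, obtain stationarity of $\varphi_\lambda\otimes\psi_\lambda$ from Proposition \ref{prop:lambda} and tightness from Proposition \ref{prop:J-tight}(4), reduce asymptotic completeness to universal preparability via Theorem \ref{th:main} and Corollary \ref{cor:ac-independent}, and verify the hypothesis of Theorem \ref{th:reverse} for the vacuum $\omega_{\delta_0}$ using the constant ground-state sequences $\theta_n=\psi_0^{\otimes n}$, the identity $(\sigma\otimes\theta_n)J_n=\sigma\circ T_{\psi_0}^n$, and the observation that $J^r$ induces the same death chain on the diagonal. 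The one step you execute differently is the verification that $\omega_{\delta_0}$ is absorbing for $T_{\psi_0}$. The paper first shows that \emph{all} normal states are tight for $T_{\psi_0}$ (monotonicity of $\mu\circ T^n_{\CD,\psi_0}\bigl(\{0,\dots,N\}\bigr)$ for the pure death chain) and then applies Proposition \ref{prop:absorbing-criteria}(ii), using $\beta_n\not=0$ to show that the linear span of the $T^n_{\psi_0}(p_{\delta_0})$ is weak$^*$-dense in $\CD$, so no normal state can vanish on all of them. You instead verify condition (c1)/(c2) of Proposition \ref{prop:absorbing} directly: $p_{\delta_0}$ is subharmonic, $T^n_{\psi_0}(p_{\delta_0})$ increases strongly to a harmonic element $a\in\CD$, and the fixed-point recursion $|\beta_m|^2(a_m-a_{m-1})=0$ together with $a_0=1$ (the $0$-th diagonal entry is preserved since $|\alpha_0|=1$) forces $a=\eins$. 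Both arguments are sound and both use the no-trapping hypothesis at the analogous spot; yours is slightly more economical, since Proposition \ref{prop:absorbing} is a full equivalence for vector states with no tightness hypothesis on $T_{\psi_0}$, so you bypass the tightness verification for $T_{\psi_0}$ entirely (tightness is still needed, but only for $Z'$, which Proposition \ref{prop:J-tight}(4) supplies), whereas the paper's route through Proposition \ref{prop:absorbing-criteria}(ii) makes visible the reachability structure — the orbit of $p_{\delta_0}$ spanning $\CD$ — that echoes the irreducibility/injectivity condition (d1) of Theorem \ref{th:main}. The small points you leave implicit, namely that $T_{\psi_0}(a)=\sum_n a_nT_{\psi_0}(p_{\delta_n})$ for a positive diagonal element (normality applied to the increasing partial sums) and that the strong limit of the increasing sequence is indeed harmonic (normality again), are routine; your block-matrix computations of $T_{\psi_0}$ and $T^r_{\psi_0}$, and the claim that they agree on the projections $p_{\delta_n}$, are all correct.
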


\begin{proof}
We have already seen in Proposition \ref{prop:lambda} that for all $0 \le \lambda < \frac 12$ the states $\varphi_\lambda \otimes \psi_\lambda$ are stationary for the generalized micromaser coupling $\alpha$.

Because for $0 < \lambda < \frac{1}{2}$ the states $\varphi_\lambda$ and $\psi_\lambda$ are faithful, we conclude in this case, by Proposition \ref{prop:J-tight}(4), that the corresponding
transition $J: (\CB(\CH), \varphi_\lambda) \rightarrow (\CB(\CH) \otimes M_2,
\varphi_\lambda \otimes \psi_\lambda)$ is tight. Hence 
we are in the setting of Theorem \ref{th:main} and we only need to check one of the equivalent conditions there to show that 
$J: (\CB(\CH), \varphi_\lambda) \rightarrow (\CB(\CH) \otimes M_2,
\varphi_\lambda \otimes \psi_\lambda)$ is asymptotically complete. In the light of Corollary \ref{cor:ac-independent} this also proves the theorem. 

We verify universal preparability of all normal states, condition \ref{th:main}(b1), with the help of the criterion given in Theorem \ref{th:reverse}.

We make use of the vector states 
$\varphi_0 = \omega_\xi$ with $\xi = \delta_0$ on $\CB(\CH)$ and $\psi_0 = \omega_\eta$ with
$\eta = 
 \left( \begin{array}{c}
 0 \\
 1 \\
       \end{array} 
\right) \,
$ on $M_2$, which are obtained for $\lambda=0$.

The intution from physics behind our arguments is the following:
In the physics interpretation these states may be interpreted as ground states occupying the lowest energy levels of their systems and it is therefore a natural strategy to prepare $\varphi_0$
experimentally by repeated interactions with the state $\psi_0$ in the environment. This allows quanta to move out of the system into the environment but not in the other direction. We only have to verify this intuition mathematically to prove Theorem \ref{th:micromaser}.
\\{\phantom{x}} %erspart eine underfull \hbox
\\
Claim: $\varphi_0$ is universally $J$-preparable by the preparing sequence $(\theta_n)$ with
$\theta_n := \psi_n := \bigotimes^n_{j=1} \psi_0$.
First note that for all initial states $\sigma$ on $\CB(\CH)$ 
\[
(\sigma \otimes \theta_n) J_n(x)  = (\sigma \otimes \bigotimes^n_{j=1} \psi_0) J_n(x) = \sigma \circ T^n_{\psi_0}
\]
(cf. the introduction to Section \ref{section:stationary markov})
and we see that the claim is equivalent to the assertion that the vector state $\varphi_0$ is absorbing for  $T_{\psi_0}$. To prove this we can use Proposition \ref{prop:absorbing-criteria}(ii). In fact, note first that for $\lambda=0$ the classical transition matrix obtained by restriction to the diagonal algebra $\CD$ is given by
\[
T_{\CD,\psi_0} 
=
\left( \begin{array}{cccccc}
 |\alpha_0|^2=1 &       0           &          0        & 0                 & 0 &\ldots  \\
 |\beta_1|^2       & |\alpha_1|^2 &          0        & 0                 & 0 & \ldots  \\
0                      &  |\beta_2|^2  & |\alpha_2|^2 & 0                 & 0 &\ldots  \\
0                      &         0          & |\beta_3|^2  & |\alpha_3|^2 & 0 &\ldots  \\
 \vdots              &        \vdots   & \vdots         & \vdots         & \vdots &\ddots \\                                   
       \end{array} 
\right)\,.
\]

\noindent
This is a pure death process and so for all probability measures $\mu$ on $\Nset_0$
and any $N \in \Nset_0$ the sequence $\mu \circ T^n_{\CD,\psi_0}  \big(\{j \in \Nset_0 \colon 0 \le j \le N\} \big)$ is increasing with $n$. Clearly this implies that all probability measures 
on $\Nset_0$
are tight with respect to $T_{\CD,\psi_0}$. But then all normal states $\theta$ on $\CB(\CH)$ are tight for $T_{\psi_0}$: just use suitable finite dimensional projections $p \in \CD$ from subsets of the form $\{j \in \Nset_0 \colon 0 \le j \le N\}$ and the fact that for such projections
\[
\theta \circ T^n_{\psi_0}(p) = \theta|_\CD \circ T^n_{\CD,\psi_0}(p)\,.
\]
Further, for the support projection $p_{\delta_0}$ of the vector state $\varphi_0$ it is easy to check (for example by induction) from the explicit form of $T_{\CD,\psi_0}$ 
(cf. the analogous consideration in \cite{GKL06}, Section 6)
that the linear span of $T^n_{\psi_0}(p_{\delta_0}) = T^n_{\CD,\psi_0}(p_{\delta_0})$ over all $n \in \Nset_0$ is equal to the linear span of all the one-dimensional projections $p_{\delta_n}$ over all $n \in \Nset_0$, i.e., to a weak$^*$-dense subset of the diagonal algebra $\CD$ (here the absence of the trapping state condition, i.e., $\beta_n \not=0$ for all $n \ge 1$, is essential). 
So there does not exist a normal state $\theta$ on $\CB(\CH)$ which vanishes on $T^n_{\psi_0}(p_{\delta_0})$ for all $n$. 
Now it follows from Proposition \ref{prop:absorbing-criteria}(ii) that $\omega_{\delta_0}$ is absorbing for $T_{\psi_0}$ and hence universally $J$-preparable. This proves our claim.

To apply Theorem \ref{th:reverse} we also have to look at the 
reverse transition $J^r$. But an inspection of the definition of $J$ reveals that $J^r$ has the same form as $J$, only the phases in the sequences $(\alpha_n)$, $(\alpha^+_n)$, $(\beta_n)$, and $(\beta^+_n)$ have changed, hence the resulting classical transition matrices on the diagonal remain unaltered.
Therefore, all arguments used for $J$ also apply for $J^r$ and we conclude that $\omega_{\delta_0}$ is also universally $J^r$-preparable. This concludes the proof of the theorem.
%\qed
\end{proof}

{\bf Acknowledgement:} 
Research was partially completed during a visit of B.\,K. in Aberystwyth in 2012 and while two of the authors (R.\,G. and B.\,K.) were visiting the Institute for Mathematical Sciences (IMS), National University of Singapore, for the program "Mathematical Horizons for Quantum Physics" in 2013. We gratefully acknowledge the corresponding funding by the EPSRC-Research Grant EP/G039275/1 and by the IMS. We thank A. G\"{a}rtner for suggesting a simplified proof for Proposition \ref{prop:orthogonal} based on Proposition \ref{prop:harmonic}. Finally we thank the referee for helpful remarks which led to some clarifications in our text, in particular to Remark \ref{rem:jaynes-cummings}.

\end{document}